\theoremstyle{plain}
\newcommand*{\Keywords}[1]{\gdef\@Keywords{#1}}
\newcommand{\lb}{\lambda}
\newcommand{\h}{\mathcal H}
\newcommand{\R}{\mathcal R}
\newcommand{\D}{{\mathbb D}}
\newcommand{\C}{\mathbb C}
\newcommand{\hh}{H^2(\mathbb \D^2)}
\newcommand{\hhn}{H^2(\mathbb \D^n)}
\newcommand{\hho}{H^2(\mathbb \D)}
\patchcmd{\section}{\scshape}{\bfseries}{}{}
\renewcommand{\@secnumfont}{\bfseries}
\renewcommand{\subsubsection}{\@startsection{subsubsection}{2}{\z@}%
             {-0.25ex\@plus -1ex \@minus -.2ex}%
             {0.5ex \@plus .2ex}%
             {\normalfont\normalsize\bfseries}}
\newtheorem{thm}{Theorem}[section]
\newtheorem{lem}[thm]{Lemma}
\newtheorem{corr}[thm]{Corollary}
\newtheorem{prop}[thm]{Proposition}
\newtheorem{definition}[thm]{Definition}
\newtheorem{exmp}[thm]{Example}
\begin{document}
\title{A brief survey of Operator Theory in $\hh$}

\author{Rongwei Yang}

\address[Rongwei Yang]{Department of Mathematics and Statistics, SUNY at Albany, Albany, NY 12222, U.S.A. }
\email[Rongwei Yang]{ryang@albany.edu}
\dedicatory{In Memory of Keiji Izuchi}

\maketitle
\section{Introduction}

This survey aims to give a brief introduction to operator theory in the Hardy space over the bidisc $\hh$. As an important component of multivariable operator theory, the theory in $\hh$ focuses primarily on two pairs of commuting operators that are naturally associated with invariant subspaces (or submodules) in $\hh$. Connection between operator-theoretic properties of the pairs and the structure of the invariant subspaces is the main subject. The theory in $\hh$ is motivated by and still tightly related to several other influential theories, namely Nagy-Foias theory on operator models, Ando's dilation theorem of commuting operator pairs, Rudin's function theory on $\hhn$, and Douglas-Paulsen's framework of Hilbert modules. Due to the simplicity of the setting, a great supply of examples in particular, the operator theory in $\hh$ has seen remarkable growth in the past two decades. This survey is far from a full account of this development but rather a glimpse from the author's perspective. Its goal is to show an organized structure of this theory, to bring together some results and references and to inspire curiosity in new researchers.

\section{Background}

\subsection{The Unilateral shift}
A bounded linear operator $T$ on a complex separable Hilbert space $\h$ is said to be normal if $T^*T=TT^*$. A milestone in operator theory is the functional calculus 
\begin{equation*} f(T)=\int_{\sigma(T)}f(\lb)dE(\lb),  \end{equation*}
which identifies a continuous function $f$ on the spectrum $\sigma(T)$ with an operator $f(T)$ in the commutative $C^*$-algebra $C^*(T)$ generated by $T$ and the identity $I$.
However, many important operators are not normal. A classical example is the unilateral shift $S$ defined by 
$Se_n=e_{n+1},\ \ n\geq 0,$
where $\{e_n\mid n\geq 0\}$ is an orthonormal basis for a complex Hilbert space $\h$. In this case $S^*S-SS^*=e_0\otimes e_0$ which is the orthogonal projection from $\h$ to the one-dimensional subspace $\C e_0$. Since $S$ is fairly simple, one naturally wonders whether its invariant subspaces can be fully described. About $70$ years ago A. Beurling solved the problem using a representation of $S$ on the Hardy space $\hho$. Let $\{z^n: n\geq 0\}$ be the standard orthonormal basis for $\hho$ and let $U: \h\to \hho$ be the unitary defined by 
$Ue_n=z^n, \ \ n\geq 0,$
then $USU^*$ is the multiplication by $z$ on functions of $\hho$, i.e., it is the Toeplitz operator $T_z$. Hence the invariant subspace problem for $S$ is equivalent to the invariant subspace problem for $T_z$.
\begin{thm}[Beurling \cite{Be}]
A closed subspace $M\subset \hho$ is invariant for $T_z$ if and only if $M=\theta\hho$ for some inner function $\theta$.
\end{thm}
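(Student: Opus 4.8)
The plan is to prove the two implications separately, with essentially all the work in the ``only if'' direction. For the ``if'' direction, suppose $M=\theta\hho$ with $\theta$ inner: since multiplication by $\theta$ is an isometry of $\hho$ into itself, $\theta\hho$ is the isometric image of a closed space, hence closed, and $T_z(\theta\hho)=\theta(z\hho)\subset\theta\hho$, so $M$ is invariant. It remains to show that every nonzero closed $T_z$-invariant subspace $M$ has this form.

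First I would extract a \emph{wandering vector}. Because $T_z$ is an isometry on $\hho$, its restriction to $M$ is again an isometry, so $zM:=T_zM$ is a closed subspace of $M$; it is proper, since $zM=M$ would force $M=\bigcap_{n\ge 0}z^nM\subset\bigcap_{n\ge 0}z^n\hho=\{0\}$ (an $\hho$-function divisible by $z^n$ for every $n$ vanishes to infinite order at the origin). Set $W:=M\ominus zM\neq\{0\}$. Iterating the orthogonal decomposition $M=W\oplus zM$ and using that $z^n$ is isometric gives $M=\bigoplus_{k=0}^{n-1}z^kW\oplus z^nM$ for each $n$, and since $\bigcap_n z^nM=\{0\}$ one obtains the Wold-type decomposition $M=\bigoplus_{k\ge 0}z^kW$, with mutually orthogonal summands (a routine check). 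Thus everything reduces to understanding $W$: if $W=\C\theta$ for a single unit vector $\theta$, then $M=\overline{\mathrm{span}}\{z^k\theta:k\ge 0\}=\theta\hho$.

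The heart of the matter is to show $\dim W=1$ and that its generator is inner, and this is where I expect the main difficulty, since it forces one to pass to boundary values on $\T$ and use the realization of $\hho$ as a closed subspace of $L^2(\T)$. The key observation is that for $f,g\in W$ one has $\langle z^nf,g\rangle=0$ and $\langle z^ng,f\rangle=0$ for all $n\ge 1$, because $z^nf,z^ng\in zM$ while $f,g\perp zM$. Reading these inner products as Fourier coefficients of the $L^1(\T)$-function $f\bar g$ shows that $f\bar g$ has all nonzero Fourier coefficients vanishing, hence $f\bar g$ equals a constant a.e.\ on $\T$. Taking $f=g$ a unit vector $\theta\in W$ gives $|\theta|=1$ a.e.\ on $\T$, and an $\hho$-function with unimodular boundary values lies in $H^\infty$ and is by definition inner. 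Taking a general $f\in W$ together with this fixed $\theta$, the relation $f\bar\theta=c$ a.e., combined with $|\theta|=1$, yields $f=c\theta$; hence $W=\C\theta$ is one-dimensional.

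Assembling the pieces: with $\theta$ the inner generator of $W$, the Wold decomposition gives $M=\bigoplus_{k\ge 0}z^k\C\theta=\overline{\theta\cdot\{\text{polynomials}\}}=\theta\hho$, the last equality because $M_\theta$ is an isometry carrying the dense set of polynomials in $\hho$ onto a dense subset of the closed space $\theta\hho$. The only genuinely delicate inputs are the Hardy-space boundary-value facts invoked above (existence of $L^2$ radial limits, the embedding $\hho\hookrightarrow L^2(\T)$, and unimodular boundary values forcing membership in $H^\infty$); granting those, the proof is the short Fourier-coefficient computation for $f\bar g$.
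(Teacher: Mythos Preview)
The paper does not supply a proof of this theorem: it is stated as Beurling's classical result with a citation to \cite{Be} and then used as background for the multivariable theory that follows. So there is no in-paper argument to compare against.

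Your proof is the standard one and is correct. The Wold-type decomposition of $M$ under the isometry $T_z$, the identification of the wandering subspace $W=M\ominus zM$, and the Fourier-coefficient argument showing $f\bar g$ is constant on $\T$ for $f,g\in W$ (hence $\dim W=1$ with an inner generator) are exactly the ingredients of the classical proof as one finds it, e.g., in Helson or Hoffman. One small point worth making explicit: the theorem as stated in the paper silently excludes $M=\{0\}$, and you have correctly restricted the ``only if'' direction to nonzero $M$; without that, $zM=M$ is possible and the wandering subspace is trivial.
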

Here an inner function $\theta$ is a function in $\hho$ such that $|\theta(z)|=1$ almost everywhere on the unit circle ${\mathbb T}$. Observe that the statement of Beurling's theorem uses two properties of $\hho$ which are absent in the abstract Hilbert space $\h$, namely boundary value and multiplication of functions in $\hho$. The study of the unilateral shift $S$, and Beurling's theorem in particular, has made a wide and far reaching impact on analytic function theory, operator theory and operator algebras, etc. It is thus an appealing question whether Beurling's theorem has multivariable generalizations.

\subsection{Rudin's examples and a theorem of Ahern and Clark}

The pair $(T_{z_1}, T_{z_2})$ (or simply ($T_1, T_2)$) of Toeplitz operators on the Hardy space over the bidisc $\hh$ is a natural generalization of the unilateral shift $T_z$ on $\hho$. A closed subspace $M\subset \hh$ is said to be invariant if it is invariant under multiplications by both $T_1$ and $T_2$, i.e., $z_1M\subset M$ and $z_2M\subset M$. Examples of such subspaces are rich. For instance, if $J$ is an ideal in the polynomial ring ${\mathcal R}:=\C[z_1, z_2]$ then its closure $[J]$ in $\hh$ is an invariant subspace. More generally, if $f_1, f_2, \cdots, f_n$ are functions in $\hh$ then the closure
\[[f_1, f_2, \cdots, f_n]:=cls\{f_1{\mathcal R}+f_2{\mathcal R}+ \cdots +f_n{\mathcal R}\}\]
is an invariant subspace generated by the set $\{f_1, f_2, \cdots, f_n\}$. The minimal cardinality of such generating sets for an invariant subspace $M$ is called the rank of $M$ and shall be denoted by $\operatorname{rank}M$. It was observed in \cite{Ru1} that the invariant subspace $[z_1-z_2]$ is not of the form $\theta \hh$ for any two variable inner function $\theta\in \hh$. In fact, more exotic examples of invariant subspaces were constructed by Rudin (\cite{Ru1, Ru2}).

\begin{exmp}
Let $M$ be the set of all functions $f\in \hh$ which have a zero of order greater than or equal to $n$ at $(\alpha_{n}, 0)$, where $\alpha_{n}=1-n^{-3}, n=1, 2, \cdots $. Then $M$ is not finitely generated, i.e., there exists no finite set $\{f_1, f_2, \cdots, f_n\}\subset \hh$ such that $M=[f_1, f_2, \cdots, f_n]$.
\end{exmp}

\begin{exmp}
Fix a number $2>R>1$, let \[f(z_1,z_2)=\prod\limits _{j=1}^{\infty}\left(1-R\left(\frac{z_1+\overline{\alpha_{j}}z_2}{2}\right)^{n_{j}}\right),\]
where $\left|\alpha_{j}\right|=1$ such that the value of each $\alpha_{j}$ repeats
infinite many times in the sequence $(\alpha_{j})$, and $(n_{j})$ is an increasing sequence of natural numbers which are chosen such that $f\in H^{2}(\mathbb{D}^{2})$. Then the singly-generated invariant subspace $[f]$ contains no bounded functions other than $0$.
\end{exmp}
These two somewhat pathological examples, which stand as extreme contrasts to Beurling's theorem, manifest the complexity of invariant subspaces in $\hh$. Nevertheless, a progress was made by Ahern and Clark in 1970 regarding invariant subspaces of finite codimension. Here, as well as in many other places in this survey, we state the result in the two variable case.

\begin{thm}[Ahern and Clark \cite{AC}]
Suppose $M$ is an invariant subspace in $\hh$ of codimension $k<\infty$ and let $Z(J)=\big\{z \in \mathbb C ^2 \mid p(z)=0, \; \forall p \in J\big\}$, where $J$ is an ideal in $\mathcal R$. Then ${\mathcal R} \cap M$ is an ideal in ${\mathcal R}$ such that
 \begin{itemize} 
 \item[(a)]\quad ${\mathcal R}\cap M$ is dense in $M$,
 \item[(b)]\quad $\operatorname{dim}\left({\mathcal R}/{{\mathcal R} \cap M}\right)=k $,
 \item[(c)]\quad $Z ({\mathcal R} \cap M)$ is a finite subset of $\mathbb D^2 $.
\end{itemize}
Conversely if $J\subset {\mathcal R}$ is an ideal with $Z(J)$ being a finite subset of $ \mathbb D^2$ then $[J]$ is an invariant subspace of $\hh$ with $\operatorname{dim} [J]^{\perp}= \operatorname{dim}\left(\mathcal R/J\right)$ and $[J]\cap\mathcal R=J$.
\end{thm}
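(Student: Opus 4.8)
\medskip
\noindent\emph{Proof strategy.} The two implications are handled separately, each pivoting on the identification of the finite-dimensional ``model space'' with the dual of $\R/J$.

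\noindent\emph{Forward direction.} First, $\R\cap M$ is an ideal because the invariant subspace $M$ is closed under multiplication by every polynomial. Consider the quotient map $\pi\colon\hh\to\hh/M$: it is bounded and surjective, $\hh/M$ is $k$-dimensional, and $\R$ is dense in $\hh$, so $\pi(\R)$ is a dense subspace of a finite-dimensional space and therefore equals $\hh/M$, with kernel $\R\cap M$. This gives (b), $\R/(\R\cap M)\cong\hh/M$; and (a): picking $g_1,\dots,g_k\in\R$ whose images form a basis of $\hh/M$, approximate $f\in M$ by polynomials $p_n\to f$, write $\pi p_n=\sum_i c_i^{(n)}\pi g_i$ with $c_i^{(n)}\to0$ by continuity of $\pi$, and note that $p_n-\sum_i c_i^{(n)}g_i\in\R\cap M$ still converges to $f$. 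For (c), put $J:=\R\cap M$, so $\dim\R/J=k<\infty$. Distinct points of $Z(J)$ give distinct, hence pairwise comaximal, maximal ideals of $\R$ containing $J$, so the Chinese Remainder Theorem exhibits $\R/J$ surjecting onto $\C^{|Z(J)|}$; thus $|Z(J)|\le k$ and $Z(J)$ is finite. To place $Z(J)$ inside $\D^2$, fix $\lb=(\lb_1,\lb_2)\in Z(J)$: if $z_1-\lb_1$ were invertible modulo $J$, say $(z_1-\lb_1)g-1\in J$, evaluation at $\lb$ would give $-1=0$; hence $\lb_1$ lies in the spectrum of the operator $S_1$ of multiplication by $z_1$ on $\R/J$. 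Under $\R/J\cong\hh/M$, $S_1$ is carried to the operator induced by $T_1$ on $\hh/M$, which, under the isometric identification $\hh/M\cong M^{\perp}$, is unitarily equivalent to the compression $C_1:=P_{M^{\perp}}T_1|_{M^{\perp}}$; thus $\overline{\lb_1}\in\sigma(C_1^{*})$. Since $M^{\perp}$ is finite-dimensional and $T_1^{*}$-invariant, $C_1^{*}=T_1^{*}|_{M^{\perp}}$, so $\overline{\lb_1}$ is an eigenvalue of $T_1^{*}$; but $T_1$ is an isometry, so $T_1^{*}$ is a contraction with no unimodular eigenvalue (from $T_1^{*}h=\mu h$ with $|\mu|=1$ one gets $h\in\operatorname{ran} T_1$, then $(1-\mu z_1)h=0$, forcing $h=0$), whence $|\lb_1|<1$. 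By symmetry $|\lb_2|<1$, so $\lb\in\D^2$.

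\noindent\emph{Converse.} $[J]$ is invariant since $z_iJ\subset J$ and multiplication by $z_i$ is continuous. Finiteness of $Z(J)=\{\lb^{(1)},\dots,\lb^{(r)}\}$ forces $\dim\R/J=:k<\infty$: by the Nullstellensatz $\sqrt J=\bigcap_s\mathfrak{m}_{\lb^{(s)}}$, and, $\R$ being Noetherian, $J\supseteq(\sqrt J)^{N}\supseteq\prod_s\mathfrak{m}_{\lb^{(s)}}^{N}$ for $N$ large, so $\R/J$ is a quotient of the finite-dimensional ring $\R/\prod_s\mathfrak{m}_{\lb^{(s)}}^{N}\cong\prod_s\R/\mathfrak{m}_{\lb^{(s)}}^{N}$. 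Now pair $\hh$ with $\R$ via $\ell_f(p):=\langle p,f\rangle$; since the monomials form an orthonormal basis, $f\mapsto\ell_f$ is injective, and since $[J]$ is the closure of $J$ we have $[J]^{\perp}=\{f\in\hh:\ell_f|_J=0\}$, so $f\mapsto\ell_f$ embeds $[J]^{\perp}$ into $(\R/J)^{*}$ and $\dim[J]^{\perp}\le k$. For the reverse inequality it suffices to realize each $\ell\in(\R/J)^{*}$ as $\ell_f$ for some $f\in\hh$, since then $f\in[J]^{\perp}$ automatically. Such an $\ell$ annihilates $\prod_s\mathfrak{m}_{\lb^{(s)}}^{N}$, hence factors through $\R/\prod_s\mathfrak{m}_{\lb^{(s)}}^{N}$ and is a finite linear combination of the derivative-evaluation functionals $p\mapsto(\partial^{\alpha}p)(\lb^{(s)})$, $|\alpha|<N$; for $\lb^{(s)}\in\D^2$ each of these is bounded on $\hh$, being the inner product against a $\bar w$-derivative of the Szeg\H{o} kernel of $\hh$ evaluated at $w=\lb^{(s)}$, a vector which lies in $\hh$ precisely because $|\lb^{(s)}_1|<1$ and $|\lb^{(s)}_2|<1$. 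Hence $\dim[J]^{\perp}=k=\dim\R/J$. Finally, $[J]\cap\R=J$: the inclusion $\supseteq$ is clear, and if $p\in\R\setminus J$ then some $\ell\in(\R/J)^{*}$ has $\ell(p)\ne0$; realizing $\ell=\ell_f$ with $f\in[J]^{\perp}$ gives $\langle p,f\rangle\ne0$, so $p\notin[J]$.

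\noindent\emph{Where the difficulty lies.} The genuine obstacle is the step separating $\D^2$ from its closure. Finiteness of $Z(J)$ and the isomorphism $\R/J\cong\hh/M$ are soft; the substance of the forward direction is excluding the distinguished boundary, which forces one to exploit that the compressions of the isometries $T_1,T_2$ have spectra inside the \emph{open} disc. In the converse the same threshold reappears as the $\hh$-boundedness of the higher-order evaluation functionals at the points of $Z(J)$ — which genuinely fails on the boundary — and one must moreover account for every primary component, embedded ones included, so that $\dim[J]^{\perp}\le\dim\R/J$ is promoted to an equality rather than left as a bound.
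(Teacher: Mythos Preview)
The paper does not give its own proof of this theorem; it is stated as a background result attributed to Ahern and Clark \cite{AC}, so there is no in-paper argument to compare against.

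Your argument is correct and is essentially the classical one. A few remarks on points worth tightening. In the forward direction, the chain ``$\lb_1\in\sigma(S_1)$ on $\R/J$ $\Rightarrow$ $\lb_1\in\sigma(C_1)$ on $M^\perp$'' uses that the linear isomorphism $\R/J\to\hh/M$ intertwines the two multiplication-by-$z_1$ operators (similarity suffices for spectra), and then the isometric identification $\hh/M\cong M^\perp$; you have this, but it helps to say explicitly that similarity, not unitary equivalence, is what carries the spectrum across the first step. Your exclusion of unimodular eigenvalues of $T_1^*$ is fine: from $T_1^*h=\mu h$ with $|\mu|=1$ one gets $\|T_1^*h\|=\|h\|$, hence $h\in\operatorname{ran}T_1$, and writing $h=z_1g$ gives $g=\mu h$, so $(1-\mu z_1)h=0$. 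In the converse, the inclusion $(\sqrt J)^N\supseteq\prod_s\mathfrak m_{\lb^{(s)}}^N$ is in fact an equality once you use that the distinct maximal ideals are pairwise comaximal, so $\bigcap_s\mathfrak m_{\lb^{(s)}}=\prod_s\mathfrak m_{\lb^{(s)}}$; stating it as equality avoids any doubt about the direction. The duality step---realizing every functional on $\R/J$ by a vector in $\hh$ via derivatives of the Szeg\H{o} kernel at points of $\D^2$---is exactly the place where the hypothesis $Z(J)\subset\D^2$ (as opposed to $\overline{\D}^2$) is used, and your identification of this as the crux is on target.
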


\subsection{Module formulation and the rigidity phenomena}

In view of the complexity of invariant subspaces in $\hh$, Douglas and Paulsen in \cite{DP} proposed a natural algebraic approach. Since an invariant subspace $M$ is invariant under multiplications by the coordinates $z_1$ and $z_2$, it is invariant under multiplications by functions in ${\mathcal R}$, and more generally it is invariant under multiplications by functions in the bidisc algebra $A(\D^2)$ which is the closure of $\R$ in $C({\overline{\mathbb D}}^2)$. In other words, an invariant subspace can be viewd as a submodule of $\hh$ over the bidisc algebra $A(\D^2)$. This point of view
gives rise to the following two canonical equivalence relations for submodules.

\begin{definition}
Two submodules $M_1$ and $M_2$ in $\hh$ are said to be similar if there is an invertible module map $T: M_1\to M_2$ such that
\[fTh=T(fh),\ \ \ f\in A(\D^2),\ h\in M_1.\]
Further, $M_1$ and $M_2$ are said to be unitarily equivalent if $T$ is unitary.
\end{definition}

Unitary equivalence of submodules is a subject in many studies. For a good reference we refer readers to \cite{CG} and the many references therein. 
\begin{thm}[Agrawal, Clark and Douglas \cite{ACD}, Douglas and Yan \cite{DYa}]
Two submodules $M_1$ and $M_2$ are unitarily equivalent if and only if there exists a $\phi\in L^{\infty}({\mathbb T}^2)$ with $|\phi(z_1, z_2)|=1$ a.e. such that $M_2=\phi M_1$.
\end{thm}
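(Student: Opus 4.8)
The plan is to work throughout in the ambient space $L^2(\T^2)\supset \hh$, on which multiplication by $L^\infty(\T^2)$ is a maximal abelian von Neumann algebra, generated (as such) by multiplication by $z_1,z_2,\bar z_1,\bar z_2$. In this picture a submodule of $\hh$ is exactly a closed subspace invariant under multiplication by $z_1$ and $z_2$, and a module map is exactly a bounded operator intertwining those two multiplications. The forward direction is then immediate: if $M_2=\phi M_1$ with $|\phi|=1$ a.e., the operator $Th:=\phi h$ sends $M_1$ isometrically onto $\phi M_1=M_2$ (isometric since $|\phi|=1$ a.e., onto by hypothesis, with automatically closed range), and it is a module map because multiplications on $L^2(\T^2)$ commute, so $T(fh)=\phi fh=f(\phi h)=f\,Th$.

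The substance is the converse. Let $T\colon M_1\to M_2$ be a unitary module map; we may assume $M_1\neq\{0\}$, since otherwise $\phi\equiv 1$ works. The strategy is to extend $T$ to a unitary $\tilde T$ on all of $L^2(\T^2)$ that commutes with multiplication by $z_1,z_2,\bar z_1,\bar z_2$; because the commutant of a maximal abelian algebra is itself, $\tilde T$ must then be multiplication by some $\phi\in L^\infty(\T^2)$, unitarity forcing $|\phi|=1$ a.e., and $\phi M_1=\tilde T M_1=TM_1=M_2$. To build $\tilde T$ I would force it to commute with $\bar z_1,\bar z_2$: on the linear span $\mathcal D$ of the vectors $\bar z_1^{m}\bar z_2^{n}h$ with $h\in M_1$ and $m,n\geq 0$, declare $\tilde T(\bar z_1^{m}\bar z_2^{n}h):=\bar z_1^{m}\bar z_2^{n}\,Th$.

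The steps are: (i) $\mathcal D$ is dense in $L^2(\T^2)$ — a nonzero $h\in M_1$ has $h\neq 0$ a.e.\ on $\T^2$ (a standard property of $\hh$, e.g.\ $\log|h|\in L^1(\T^2)$ for $h\neq 0$), so $\{gh:g\in L^\infty(\T^2)\}$ is dense, and in fact $\overline{\mathcal D}$ is the smallest reducing subspace containing $M_1$, hence all of $L^2(\T^2)$; (ii) $\tilde T$ is well defined and isometric on $\mathcal D$: given a finite combination $\sum_i\bar z^{\gamma_i}h_i=v$ with $h_i\in M_1$, multiplying by a monomial $z^{\beta}$ of high enough degree lands it in $M_1$ as $z^{\beta}v=\sum_i z^{\beta-\gamma_i}h_i=:w$ (using that $M_1$ is a closed module), and since multiplication by monomials is unitary on $L^2(\T^2)$ one gets $\|v\|=\|w\|=\|Tw\|=\|\bar z^{\beta}Tw\|=\|\tilde Tv\|$; applying the same $z^{\beta}$-trick together with linearity of $T$ to a relation $\sum_i\bar z^{\gamma_i}h_i=0$ shows $\sum_i\bar z^{\gamma_i}Th_i=0$, i.e.\ $\tilde T$ is well defined; (iii) extend $\tilde T$ by continuity to $L^2(\T^2)$, run the identical construction on $T^{-1}\colon M_2\to M_1$, and check the two extensions are mutually inverse, so $\tilde T$ is unitary; (iv) verify $\tilde T$ commutes with multiplication by $\bar z_1,\bar z_2$ (by construction) and with multiplication by $z_1,z_2$ using that $T$ is a module map, e.g.\ $\tilde T(z_1\bar z_2^{n}h)=\tilde T(\bar z_2^{n}z_1 h)=\bar z_2^{n}T(z_1h)=\bar z_2^{n}z_1Th=z_1\tilde T(\bar z_2^{n}h)$, and symmetrically when one of the $\bar z$-exponents absorbs the incoming $z_i$.

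The crux is steps (i) and (ii): making the extension simultaneously consistent and isometric. This rests entirely on the dichotomy that a nonzero submodule of $\hh$ has elements vanishing only on a null subset of $\T^2$, which is what pushes the reducing subspace it generates up to all of $L^2(\T^2)$; I expect this to be the main thing worth spelling out. Everything after that is the soft "commutant of a maximal abelian algebra" argument, the only delicate point being that $\mathcal D$ is genuinely invariant under multiplication by $z_1$ and $z_2$ as well — which holds precisely because $M_1$ itself is $z_1,z_2$-invariant.
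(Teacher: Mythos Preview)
The paper itself does not prove this theorem; it is quoted as a known result from \cite{ACD} and \cite{DYa}, so there is no in-paper proof to compare against. That said, your argument is correct and is essentially the classical approach used for results of this type (and in particular in \cite{ACD}): embed $M_1$ in $L^2(\T^2)$, extend the unitary module map to an operator on $L^2(\T^2)$ commuting with multiplication by $z_1,z_2,\bar z_1,\bar z_2$, and invoke the maximal abelian property of $L^\infty(\T^2)$ acting on $L^2(\T^2)$ to conclude that the extension is multiplication by a unimodular $\phi$.

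Your steps (i)--(iv) are sound. The density in (i) is handled correctly: $\mathcal D$ is invariant under both $z_i$ and $\bar z_i$ (the former because $M_1$ is a submodule), hence $\overline{\mathcal D}$ is a reducing subspace for $L^\infty(\T^2)$ containing a function nonvanishing a.e.\ on $\T^2$, so $\overline{\mathcal D}=L^2(\T^2)$. In (ii), the ``multiply up by $z^\beta$'' trick gives both well-definedness and isometry at once, and it is worth recording explicitly (as you essentially do) that the formula $\tilde T v=\bar z^\beta T(z^\beta v)$ is independent of $\beta$ once $\beta$ is large enough, by the module property of $T$. Step (iv) splits correctly into the two cases (an incoming $z_i$ either cancels a $\bar z_i$ or is absorbed into $M_1$). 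The only cosmetic point is that in (iii) you do not actually need to build the inverse separately: once $\tilde T$ is isometric on a dense set and its range contains the dense set $\{\bar z^\gamma Th:h\in M_1\}$ (by the same argument applied to $M_2$), it is automatically unitary.
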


The following result describes all submodules that are equivalent to $\hh$.

\begin{corr}
A submodule $M$ is untarily equivalent to $\hh$ if and only if $M=\theta\hh$ for some inner function $\theta\in \hh$.
\end{corr}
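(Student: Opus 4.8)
The plan is to obtain the corollary as a direct consequence of the preceding theorem of Agrawal--Clark--Douglas and Douglas--Yan, supplying only one elementary function-theoretic fact along the way. For the forward implication, assume $M$ is unitarily equivalent to $\hh$ and apply that theorem with $M_1=\hh$, $M_2=M$; it yields a function $\phi\in L^{\infty}(\T^2)$ with $|\phi|=1$ a.e.\ on $\T^2$ and $M=\phi\,\hh$. Since the constant $1$ lies in $\hh$, we get $\phi=\phi\cdot 1\in\phi\,\hh=M\subseteq\hh$. Thus $\phi$ is a function in $\hh$ with unimodular boundary values a.e., that is, $\phi$ is an inner function, and $M=\theta\,\hh$ with $\theta:=\phi$.

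For the converse, let $\theta\in\hh$ be inner. First one records that $\theta$ is in fact bounded, i.e.\ $\theta\in H^{\infty}(\D^2)$ --- the standard identification $H^{\infty}(\D^2)=\hh\cap L^{\infty}(\T^2)$ --- so multiplication by $\theta$ maps $\hh$ into $\hh$. The operator $U\colon\hh\to\hh$, $Uf=\theta f$, is then a module map over $A(\D^2)$, because $U(gf)=\theta(gf)=g(\theta f)=gUf$; and it is isometric, since $\|\theta f\|^2=\int_{\T^2}|\theta|^2|f|^2=\int_{\T^2}|f|^2=\|f\|^2$ by $|\theta|=1$ a.e.\ and the fact that the norm of $\hh$ is the $L^2(\T^2)$ norm of boundary functions. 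Hence $U$ is a unitary from $\hh$ onto its range $\theta\,\hh$; in particular $\theta\,\hh$ is closed, is plainly invariant under $T_{z_1}$ and $T_{z_2}$, and so is a submodule unitarily equivalent to $\hh$. (One could instead simply invoke the theorem in the reverse direction with $\phi=\theta$.)

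I do not anticipate a genuine obstacle here: once the theorem is available, the corollary is essentially bookkeeping. The one point that warrants a word of justification is the assertion that an inner function on the bidisc is automatically bounded, equivalently $\hh\cap L^{\infty}(\T^2)=H^{\infty}(\D^2)$; this is classical --- it follows, for instance, from the Poisson representation of $H^2$ functions on $\D^2$ --- and with it in hand both implications are immediate.
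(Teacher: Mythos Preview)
Your proof is correct and follows the same approach as the paper: the forward implication applies Theorem 2.6 and uses $1\in\hh$ to deduce $\phi\in\hh$ is inner, and the converse is the straightforward observation that multiplication by an inner function is a unitary module map from $\hh$ onto $\theta\hh$. You supply more detail on the converse (isometry, module-map property, boundedness of $\theta$) than the paper's one-line ``the other direction is clear,'' but the argument is the same.
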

\begin{proof}
If $M$ is unitarily equivalent to $\hh$ then by Theorem 2.6 there exists a $\phi\in L^{\infty}({\mathbb T}^2)$ with $|\phi|=1$ a.e. such that $M=\phi \hh$. Since $1\in \hh$, we have $\phi\in M\subset \hh$ (which means $\phi$ has an analytic extension to $\D^2$). Hence $\phi$ is inner. The other direction is clear because multiplication by $\phi$ is a unitary map from $\hh$ to $M$.
\end{proof}

A bit of function theory from \cite{Ru1} is needed to proceed. The Nevanlinna class $N(\D^2)$ consists of holomorphic functions $f$ on $\D^2$ such that 
\[\sup_{0\leq r<1}\int_{\mathbb T^2}\log^+|f(rz)|dm(z)<\infty,\]
where $\log^+x=\log x$ for $x\geq 1$ and $\log^+x=0$ for $0\leq x<1$.
If $f\in N(\D^2)$ then the radial limit $f^*$ exists a.e. on $\mathbb T^2$ with $\log |f^*|\in L^1( \mathbb T^2)$, and there is a real singular measure $d\sigma_f$ on $\mathbb T^2$ such that the least harmonic majorant $u(\log |f|)$ of $\log |f|$ is given by 
\begin{equation*}u(\log |f|)(z)=P_z(\log |f^*|+d\sigma_f), \ \ z\in \D^2,\tag{2.1} \end{equation*}
where $P_z$ is the Poisson integration. Every function $f\in H^p(\D^2), 0<p<\infty$, is in $N(\D^2)$ with $d\sigma_f\leq 0$. For a submodule $M$, the following two sets are defined in \cite{DYa}: 
\[Z(M)=\{z\in \D^2\mid f(z)=0, \forall f\in M\},\ Z_{\partial}(M)=\inf \{-d\sigma_f\mid f\in M,\ f\neq 0\}.\]
A submodule is said to satisfy condition (*) if $Z_{\partial}(M)=0$ and the real $2$-dimensional Hausdorff measure of $Z(M)$ is $0$. 
\begin{thm}[Douglas and Yan \cite{DYa}]
If $M_1$ and $M_2$ are submodules of $H^2(\D^2)$ which satisfy (*), then the following are equivalent:

(a) $M_1$ and $M_2$ are unitarily equivalent.

(b) $M_1$ and $M_2$ are similar.

(c) $M_1=M_2$.
\end{thm}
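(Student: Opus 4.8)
The implications $(c)\Rightarrow(a)\Rightarrow(b)$ are immediate, since for $M_1=M_2$ the identity is a unitary module map and a unitary map is invertible; the content is $(b)\Rightarrow(c)$. The plan is to show that an invertible module map $T\colon M_1\to M_2$ must be multiplication by a function $\varphi$ which, by virtue of (*), lies in $H^{\infty}(\D^2)$ together with its reciprocal, and then to read off $M_1=M_2$ from $\varphi M_1=TM_1=M_2$ and $(1/\varphi)M_2=T^{-1}M_2=M_1$. First I would upgrade the hypothesis: since $T$ is bounded and commutes with multiplication by $z_1$ and $z_2$, the approximation $\psi_r(z):=\psi(rz)\in A(\D^2)$ with $\psi_r\to\psi$ boundedly for $\psi\in H^{\infty}(\D^2)$, together with dominated convergence, shows $T$ commutes with multiplication by every $\psi\in H^{\infty}(\D^2)$; the same approximation shows every submodule of $\hh$ is automatically a module over $H^{\infty}(\D^2)$. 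A standard localization argument then produces a meromorphic $\varphi$ on $\D^2$ with $Th=\varphi h$ for all $h\in M_1$: for $\lambda\notin Z(M_1)\cup Z(M_2)$ the reproducing kernel of $M_i$ at $\lambda$ spans the (one‑dimensional) joint eigenspace at $\bar\lambda$ of the compression to $M_i$ of the backward shift pair $(T_1^{*},T_2^{*})$, and since $T^{*}$ intertwines these compressed pairs it sends the kernel of $M_2$ to a scalar $\overline{\varphi(\lambda)}$ times the kernel of $M_1$; pairing with $h$ gives $(Th)(\lambda)=\varphi(\lambda)h(\lambda)$, and as $Th$ and $h$ are holomorphic this exhibits $\varphi=Th/h$ as a single meromorphic function on $\D^2$. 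Applied to $T^{-1}$ this gives $T^{-1}g=(1/\varphi)g$ on $M_2$.

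Next I would exploit the two halves of (*) in turn. The polar set of $\varphi$ is a pure $1$-codimensional analytic subset of $\D^2$, and it lies in $Z(M_1)$ because $\varphi h=Th\in\hh$ is holomorphic for every $h\in M_1$; a nonempty analytic hypersurface has positive $2$-dimensional Hausdorff measure, while $Z(M_1)$ has $2$-dimensional Hausdorff measure $0$ by (*), so $\varphi$ has no poles and is holomorphic on $\D^2$. The same reasoning applied to $1/\varphi$ puts the zero set of $\varphi$ inside $Z(M_2)$, which is also $2$-dimensionally null, so $\varphi$ is zero free and $\log|\varphi|$ is pluriharmonic on $\D^2$. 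For the boundary behaviour: from $\varphi h\in\hh\subset N(\D^2)$, $h\in N(\D^2)$, and the standard $L^1$-bounds on $\log|h(r\cdot)|$, the identity $\log|\varphi|=\log|\varphi h|-\log|h|$ forces $\varphi\in N(\D^2)$, and likewise $1/\varphi\in N(\D^2)$. Being harmonic, $\log|\varphi|$ is its own least harmonic majorant, so by (2.1) its boundary measure is $\log|\varphi^{*}|\,dm+d\sigma_{\varphi}$. Using additivity of the singular measure under products together with $d\sigma_{\varphi h}\le 0$ (since $\varphi h\in\hh$), we get $d\sigma_{\varphi}=d\sigma_{\varphi h}-d\sigma_{h}\le-d\sigma_{h}$ for every $h\in M_1\setminus\{0\}$, hence $d\sigma_{\varphi}\le Z_{\partial}(M_1)=0$; symmetrically $d\sigma_{1/\varphi}\le Z_{\partial}(M_2)=0$, and since $d\sigma_{\varphi}+d\sigma_{1/\varphi}=0$ both vanish. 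Thus $\varphi$ and $1/\varphi$ lie in the Smirnov class $N^{+}(\D^2)$.

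To conclude, boundedness of $T$ gives $\int_{\T^2}|\varphi^{*}|^{2}|h^{*}|^{2}\,dm=\|Th\|^{2}\le\|T\|^{2}\|h\|^{2}$ for all $h\in M_1$. If $|\varphi^{*}|>\|T\|+\varepsilon$ on a set $E$ of positive measure, I would fix $h_0\in M_1\setminus\{0\}$ and, for large $N$, multiply it by $\psi_N\in H^{\infty}(\D^2)$ with $|\psi_N^{*}|\le 1$, $|\psi_N^{*}|=1$ on $E$ and $|\psi_N^{*}|\to 0$ off $E$; then $\psi_N h_0\in M_1$, and passing to the limit the displayed inequality yields $(\|T\|+\varepsilon)^{2}\int_E|h_0^{*}|^{2}\le\|T\|^{2}\int_E|h_0^{*}|^{2}$, a contradiction since $h_0^{*}$ is a.e.\ nonzero. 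Hence $|\varphi^{*}|\le\|T\|$ a.e., and symmetrically $|\varphi^{*}|\ge\|T^{-1}\|^{-1}$ a.e. Since $\varphi$ and $1/\varphi$ are in $N^{+}(\D^2)$ with bounded boundary values, the Smirnov maximum principle puts both in $H^{\infty}(\D^2)$. Finally, multiplication by $\varphi\in H^{\infty}(\D^2)$ preserves the submodule $M_1$, so $M_2=TM_1=\varphi M_1\subseteq M_1$; multiplication by $1/\varphi$ preserves $M_2$, so $M_1=T^{-1}M_2=(1/\varphi)M_2\subseteq M_2$; therefore $M_1=M_2$.

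I expect the second paragraph to be the crux: it is there that (*) is used in its essential form, and it rests on Rudin's function theory of $N(\D^2)$ — the existence and product‑additivity of the singular measures $d\sigma_f$, and equation (2.1) — as well as on the fact that the polar and zero sets arising along the way are genuine analytic hypersurfaces, so that the Hausdorff‑measure hypothesis actually bites. The localization in the first paragraph is routine module bookkeeping apart from the one‑dimensionality of the backward‑shift joint eigenspaces away from $Z(M_i)$, which needs its own short argument; the passage in the last paragraph from being bounded above and below almost everywhere to being bounded, via the Smirnov maximum principle and the concentration trick, is comparatively routine.
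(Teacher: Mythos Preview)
The survey does not supply its own proof of this theorem: Theorem~2.8 is stated with attribution to Douglas and Yan \cite{DYa} and is immediately followed by a remark and Example~2.9, so there is no in-paper argument to compare your sketch against. That said, your outline is the natural one and tracks the strategy of the cited Douglas--Yan paper: manufacture a meromorphic symbol $\varphi$ for the module map via reproducing kernels, use the interior half of $(*)$ (the Hausdorff-measure hypothesis on $Z(M_i)$) to rule out poles and zeros, use the boundary half (the vanishing of $Z_{\partial}(M_i)$) together with additivity of $d\sigma$ to place $\varphi$ and $1/\varphi$ in $N^{+}(\D^2)$, and then upgrade to $H^{\infty}(\D^2)$ and read off $M_1=M_2$ from the $H^\infty$-invariance of submodules.

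The one place I would push back is the ``concentration trick'' in your final paragraph, which you label comparatively routine. You ask for $\psi_N\in H^{\infty}(\D^2)$ with $|\psi_N^{*}|\le 1$, $|\psi_N^{*}|\equiv 1$ on an arbitrary measurable $E\subset\T^2$, and $|\psi_N^{*}|\to 0$ a.e.\ off $E$. In one variable such $\psi_N$ are manufactured as outer functions with prescribed boundary modulus, but the bidisc has no comparable outer-function factory, and for a generic $E$ it is not at all clear that such $\psi_N$ exist. The step can be repaired---for instance by testing first on product sets $E_1\times E_2\subset\T\times\T$ using products $\psi_N^{(1)}(z_1)\psi_N^{(2)}(z_2)$ of one-variable outer functions, which already suffices to pin down $|\varphi^{*}|$ a.e., or by a slicing argument freezing one variable on $\T$---but this is genuine two-variable work and not the throwaway you suggest. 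The eigenspace one-dimensionality you correctly flag, by contrast, is closer to routine once $\lambda\notin Z(M_i)$ is in force.
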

To see why condition (*) matters, one observes that $\hh$ and $z_1\hh$ are unitarily equivalent but not identical. Further study which involves algebraic geometry and commutative algebras was made in \cite{DPSY}. Theorem 2.8 reveals the so-called rigidity phenomenon of submodules. The following example provides the simplest illustration.
\begin{exmp}
Fix any $\alpha\in \D^2$ and set $H_{\alpha}=\{f\in \hh\mid f(\alpha)=0\}$. Clearly, $H_{\alpha}$ satisfies (*) (why?). Then it follows from Theorem 2.8 that for two points $\alpha, \beta \in \D^2$, the submodules $H_{\alpha}$ and $H_{\beta}$ are similar only if $\alpha=\beta$.
\end{exmp}

Given a submodule $M\subset \hh$, its orthogonal complement $N:=\hh\ominus M$ is a module over $A(\D^2)$ with module action defined by \[ f\cdot h=P_N(fh),\ \ h\in N,\ f\in A(\D^2),\]
where $P_N$ stands for the orthogonal projection from $\hh$ onto $N$. Such $N$ is called a quotient module, and they exhibit a stronger rigidity phenomenon.

\begin{thm}[Douglas and Foias \cite{DF}]
Two quotient modules $N_1$ and $N_2$ in $\hh$ are unitarily equivalent only if $N_1=N_2$.
\end{thm}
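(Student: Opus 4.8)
The plan is to show that a unitary module map $U\colon N_1\to N_2$ forces $P_{M_1}=P_{M_2}$, where $M_i:=\hh\ominus N_i$; since $N_1=N_2$ amounts to $P_{M_1}=P_{M_2}$, this finishes the proof. The first move is to single out in each quotient module a distinguished cyclic vector. Put $e_i:=P_{N_i}1$. For any polynomial $p$, since $p\,P_{M_i}1\in M_i$ we get
\[
p\cdot e_i \;=\; P_{N_i}\!\bigl(p\,P_{N_i}1\bigr)\;=\;P_{N_i}p-P_{N_i}\!\bigl(p\,P_{M_i}1\bigr)\;=\;P_{N_i}p,
\]
so $e_i$ is cyclic and $\langle p\cdot e_i,\,q\cdot e_i\rangle=\langle P_{N_i}p,P_{N_i}q\rangle=\langle p,q\rangle-\langle P_{M_i}p,P_{M_i}q\rangle$ for all polynomials $p,q$. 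Consequently, \emph{once we know} that $Ue_1=c\,e_2$ for some unimodular $c$, applying $U$ to the $\R$-orbit of $e_1$ and using that $U$ is isometric yields $\langle P_{M_1}p,P_{M_1}q\rangle=\langle P_{M_2}p,P_{M_2}q\rangle$ for all $p,q$; combined with $\langle P_{M_i}p,q\rangle=\langle P_{M_i}p,P_{M_i}q\rangle$ and the density of $\R$ in $\hh$, this gives $P_{M_1}=P_{M_2}$. So the whole problem collapses to matching the distinguished vectors.

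Toward that, I would first record that, $M_i$ being a submodule, $N_i$ is co-invariant, so the compressed backward shift on $N_i$ is $\bigl(P_{N_i}T_j|_{N_i}\bigr)^{*}=T_j^{*}|_{N_i}$; a direct computation then gives, for the reproducing kernel $k_\lb^{N_i}=P_{N_i}k_\lb$ of $N_i$ at $\lb$,
\[
T_j^{*}|_{N_i}\,k_\lb^{N_i}\;=\;\overline{\lb_j}\,k_\lb^{N_i}-P_{N_i}T_j^{*}P_{M_i}k_\lb,\qquad j=1,2.
\]
At a point $\lb\in Z(M_i)$ the correction term drops out ($k_\lb\perp M_i$ there, so $k_\lb^{N_i}=k_\lb$), so $k_\lb$ is a joint eigenvector of $(T_1^{*}|_{N_i},T_2^{*}|_{N_i})$ with eigenvalue $(\overline{\lb_1},\overline{\lb_2})$; as the joint eigenspace of $(T_1^{*},T_2^{*})$ in $\hh$ for that eigenvalue is one-dimensional (spanned by $k_\lb$), $U$ must send $k_\lb$ to a scalar multiple of $k_\lb$, and in particular $Z(M_1)=Z(M_2)$. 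If this common zero set is nonempty, the proof can then be finished by hand: fix $\lb\in Z(M_1)=Z(M_2)$; then $1/k_\lb$ is a polynomial, so $k_\lb$ is cyclic in $\hh$, while $P_{N_i}k_\lb=k_\lb$ and $Uk_\lb=c\,k_\lb$ with $|c|=1$. Rerunning the computation of the first paragraph with $e_i$ replaced by $P_{N_i}k_\lb$ and $\R$ replaced by the dense subspace $k_\lb\R$ now gives $\langle P_{M_1}h,P_{M_1}h'\rangle=\langle P_{M_2}h,P_{M_2}h'\rangle$ for all $h,h'\in\hh$, hence $P_{M_1}=P_{M_2}$. This already covers, for example, all finite-codimension quotient modules (cf.\ Theorem 2.4).

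For the remaining case --- where $Z(M_i)$ can be empty, as happens already for $M_i=\theta(z_1)\hh$ with $\theta$ a singular inner function, and then the compressed shifts on $N_i$ have no joint eigenvectors at all --- I would pass to one coordinate. Writing $\hh=H^2\!\bigl(\D;H^2(\D)\bigr)$ as a Hardy space in $z_2$ with coefficients in the Hardy space $H^2(\D)$ in $z_1$, the subspace $N_i$ is co-invariant for $T_2$, and the compression $Z_i:=P_{N_i}T_2|_{N_i}$, being of class $C_{\cdot 0}$, is a Sz.-Nagy--Foias model operator whose characteristic function is the Beurling--Lax--Halmos inner function $\Theta_i$ representing $M_i$ as a $T_2$-invariant subspace. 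Since $U$ intertwines $Z_1$ with $Z_2$ and their adjoints, the uniqueness theorem for operator models forces $\Theta_1$ and $\Theta_2$ to coincide; as this determines $M_i$ only up to the constant-unitary ambiguity in a Beurling--Lax--Halmos representation, I would remove the ambiguity by feeding in that $U$ also intertwines the $T_1$-compressions $P_{N_i}T_1|_{N_i}$, which act inside the coefficient space $H^2(\D)$ and so rigidify the identification, upgrading ``$\Theta_1$ and $\Theta_2$ coincide'' to $\Theta_1 H^2=\Theta_2 H^2$, i.e.\ $M_1=M_2$.

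The main obstacle is precisely this last identification in the zero-free case. There is no counterpart here of the clean description in Theorem 2.6 --- a unitary module map between quotient modules need not be multiplication by a unimodular function --- and, as Rudin's examples warn, the submodules in play may carry very little visible structure, so one is forced to extract everything from the interplay of the two compressed shifts together with the rigidity built into operator models; this is the technical core of the Douglas--Foias theorem. Once the distinguished vectors have been matched (equivalently, the kernels $k_\lb$, or the inner functions $\Theta_i$), the rest is the short formal computation with Gram matrices given above.
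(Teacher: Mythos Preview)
Your overall framework is sound: once you know $Ue_1=c\,e_2$ with $e_i=P_{N_i}1$, the Gram-matrix computation in your first paragraph finishes the proof cleanly. The problem is that your route to $Ue_1=c\,e_2$ has a genuine gap. The case $Z(M_i)\neq\emptyset$ is fine, but in the zero-free case you fall back on Nagy--Foias uniqueness for the one-variable model of $S_2$, which only pins down $\Theta_i$ up to constant unitaries on both sides; your suggestion to ``feed in'' the $T_1$-compression to kill that ambiguity is not a proof, and you yourself flag this as the technical core left undone.

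The paper bypasses the case split entirely with a single identity you overlooked: the defect operator
\[
\Delta_{S^*}\;=\;I-S_1S_1^*-S_2S_2^*+S_1S_2S_1^*S_2^*\;=\;\phi\otimes\phi,\qquad \phi=P_N1,
\]
is \emph{always} rank one with range $\C\phi$ (formula (5.1) in the paper). Since $U\Delta_{S^{(1)*}}U^*=\Delta_{S^{(2)*}}$, this gives $(U\phi_1)\otimes(U\phi_1)=\phi_2\otimes\phi_2$, hence $U\phi_1=c\,\phi_2$ with $|c|=1$ and $\|\phi_1\|=\|\phi_2\|$ --- exactly the matching of distinguished vectors you were after, with no hypothesis on $Z(M_i)$. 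From here your Gram-matrix argument works; the paper instead computes $\|(1-\lb_1S_1)^{-1}(1-\lb_2S_2)^{-1}\Delta_{S^*}\|_{HS}^2=\|\phi\|^2K^N(\lb,\lb)$ and concludes $K^{N_1}(\lb,\lb)=K^{N_2}(\lb,\lb)$, which also forces $N_1=N_2$. Either endgame is short; the point is that the rank-one formula for $\Delta_{S^*}$ is the missing key, and it replaces all of your model-theoretic maneuvering in the zero-free case.
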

We shall give a simple proof to this theorem in Section 5.

The singular measure $d\sigma_f$ and $Z_{\partial}(M)$ turned out to be useful in several other places. A connection with joint invariant subspaces of $(S_1, S_2)$ will be mentioned in Section 5.
An application to the study of multipliers of submodule $M$ can be found in Nakazi \cite{Na4}. His other pioneer work related to the topics of this survey can be found in \cite{Na2, Na3}. 
For some ideals $J\subset {\mathcal R}$ (for instance $J=(z_1-z_2)$), the associated quotient module $\hh\ominus [J]$ naturally gives rise to a Cowen-Douglas operator. Hence a particular Hermitian bundle (kernel bundle) exists over the variety $Z(J)$. Study of the curvature of the bundle and its Chern class is an exciting interplay between operator theory and complex geometry. Since the focus of this survey is operator theory, we refer the readers to \cite{CD, CD2, DM1, DM2, DM3, DP} for research along that line.

\section{Nagy-Foias theory in $\hh$}

Let $E$ be a complex separable infinite dimensional Hilbert space. A contraction $T$ on $E$ is a bounded linear operator such that $\|T\|\leq 1$. It is said to be of $C_{\cdot 0}$-class if $(T^*)^n$ converges to $0$ in strong operator topology. It is said to be of $C_0$-class if there exists a nontrivial $\phi\in H^{\infty}(\D)$ such that $\phi(T)=0$. In functional model theory the vector-valued Hardy space $\hho\otimes E$ is used to construct universal models for the $C_{\cdot 0}$-class and $C_0$-class contractions. For references on this important part of operator theory we refer readers to \cite{Be1, BFKS, Br, Do1, Li, Ni, RR}. In particular, \cite{BFKS} includes many updates of the theory. Here we only mention a few key ingredients in the Nagy-Foias theory (\cite{BFKS}). Let the defect operators of a contraction $T$ be defined by 
\[D_T=(I-T^*T)^{1/2},\ \ D_{T^*}=(I-TT^*)^{1/2},\]
and let the closure of their ranges be denoted by ${\mathcal D}_{T}$ and ${\mathcal D}_{T^*}$, respectively. The characteristic operator function of $T$ is defined as
\begin{equation*}
\theta_T(\lb)=[-T+\lb D_{T^*}(I-\lb T^*)^{-1}D_T]\mid_{\mathcal D_T},\ \ \lb\in \D. \tag{3.1}
\end{equation*}

\subsection{A formulation in $\hh$}
Since one can identify $E$ with $\hho$, the Nagy-Foias theory can be formulated in $ \hho\otimes E \cong \hh$. 
A closed subspace $M\subset \hh$ is said to be $z_1$-invariant if its invariant under multiplication by $z_1$. Then by Beurling-Lax-Halmos theorem (\cite{La, Ni}) every $z_1$-invariant subspace $M\in \hh$ is of the form $\Theta H^2(E')$, where $E'$ is some Hilbert space and $\Theta$ is an analytic function on $\D$ such that $\Theta(z)$ is a bounded linear operator from $E'$ to $E=\hho$ for each $z$, and the radial limit $\lim_{r\to 1}\Theta(re^{i\theta})$ exists and is an isometry for almost every $\theta\in [0, 2\pi)$. Up to multiplication by constant unitary on the right such $\Theta$ is unique, and it is called the left-inner function for the $z_1$-invariant subspace $M$. As before, we set $N=\hh\ominus M$. Then the compression $S_1$ of multiplication by $z_1$ to $N$ is defined by
$S_1h=P_N(z_1h),\ \ h\in N.$ The contraction $S_1$ is a universal model in the following sense.
\begin{thm}
Let $T$ be a $C_{\cdot 0}$-class contraction. Then there is a $z_1$-invariant subspace $M\subset \hh$ such that 
$T$ is unitarily equivalent to $S_1$.
\end{thm}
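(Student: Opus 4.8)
The plan is to build the Nagy--Foias model space directly from the defect operators of $T$ and then to pad it inside $\hh$ so that it becomes the orthogonal complement of a $z_1$-invariant subspace. First I would introduce the map $\Phi\colon E\to H^2(\mathcal D_{T^*})$ given by
\[(\Phi x)(\lb)=D_{T^*}(I-\lb T^*)^{-1}x=\sum_{n\geq 0}\lb^{n}D_{T^*}(T^*)^{n}x,\qquad \lb\in\D .\]
Using the identity $\|D_{T^*}y\|^2=\|y\|^2-\|T^*y\|^2$ the sum telescopes, so
\[\|\Phi x\|^2=\sum_{n\geq 0}\Big(\|(T^*)^{n}x\|^2-\|(T^*)^{n+1}x\|^2\Big)=\|x\|^2-\lim_{n\to\infty}\|(T^*)^{n}x\|^2=\|x\|^2 ,\]
and the last equality is exactly where the $C_{\cdot 0}$ hypothesis enters. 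Hence $\Phi$ is an isometry, and a short manipulation of $(I-\lb T^*)^{-1}$ gives the intertwining relation $\Phi T^*=S^*\Phi$, where $S$ denotes multiplication by $\lb$ on $H^2(\mathcal D_{T^*})$. Thus $\h:=\Phi E$ is a closed $S^*$-invariant subspace and $\Phi$ carries $T^*$ onto $S^*|_{\h}$.

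Consequently $M_0:=H^2(\mathcal D_{T^*})\ominus\h$ is $S$-invariant, so by the Beurling--Lax--Halmos theorem $M_0=\Theta H^2(E')$ for some auxiliary Hilbert space $E'$ and left-inner $\Theta$; repeating the resolvent computation identifies $E'$ with $\mathcal D_T$ and $\Theta$ with the characteristic function $\theta_T$ of $(3.1)$, so that $\h=H^2(\mathcal D_{T^*})\ominus\theta_T H^2(\mathcal D_T)$ is precisely the model space. (This concrete description is not strictly needed for the statement, but it is what ties the construction to $\theta_T$.) Taking adjoints in the algebra of bounded operators on $\h$ turns $\Phi T^*\Phi^{-1}=S^*|_{\h}$ into $\Phi T\Phi^{-1}=(S^*|_{\h})^{*}=P_{\h}S|_{\h}$, so $T$ is unitarily equivalent to the compression of the shift $S$ to the co-invariant subspace $\h$.

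It remains to realize this inside $\hh$. Since $\mathcal D_{T^*}$ is separable it embeds isometrically into $\hho$, and under the identification $H^2(\D)\otimes\hho\cong\hh$ with $z_1$ acting as the shift on the first tensor factor, $H^2(\mathcal D_{T^*})$ becomes the $z_1$-invariant subspace $H^2(\D)\otimes\mathcal D_{T^*}$ and $M_0$ becomes a $z_1$-invariant subspace of $\hh$ contained in it. Setting
\[M:=M_0\ \oplus\ \big(H^2(\D)\otimes(\hho\ominus\mathcal D_{T^*})\big)\subset\hh ,\]
each summand is $z_1$-invariant, hence so is $M$, and $N:=\hh\ominus M=\h$. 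Because multiplication by $z_1$ preserves $H^2(\D)\otimes\mathcal D_{T^*}$, for $h\in N$ one has $P_N(z_1h)=P_{\h}(Sh)$, so the compression $S_1$ on $N$ coincides with $P_{\h}S|_{\h}$, which by the previous paragraph is unitarily equivalent to $T$.

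The one genuinely substantive point is the isometry property of $\Phi$, which fails for a general completely non-unitary contraction — there one must also adjoin the residual summand $\overline{\Delta_T L^2(\mathcal D_T)}$ — so the argument really rests on the $C_{\cdot 0}$ assumption at that single spot. The other place that needs care is the padding step: one must check that $H^2(\D)\otimes(\hho\ominus\mathcal D_{T^*})$ is $z_1$-invariant and orthogonal to $\h$, so that $N$ is literally the model space and the compression of $z_1$ to $N$ is not altered by enlarging the coefficient space; everything else is routine bookkeeping.
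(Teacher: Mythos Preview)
The paper does not supply its own proof of this theorem; it is stated as a known result from Nagy--Foias theory, with the surrounding discussion and formula~(3.2) pointing to the references \cite{BFKS, Ya3} for the construction of the model and the identification of the left-inner function with the characteristic function. Your argument is correct and is precisely the standard proof one finds in those sources: build the isometric embedding $\Phi$ into $H^2(\mathcal D_{T^*})$, use the $C_{\cdot 0}$ hypothesis to kill the residual summand, read off that $T$ is the compression of the shift to the coinvariant subspace $\Phi E$, and then pad the coefficient space up to $\hho$ to land inside $\hh$. The padding step is exactly what the paper alludes to when it says ``one can identify $E$ with $\hho$,'' and your observation that the extra summand $H^2(\D)\otimes(\hho\ominus\mathcal D_{T^*})$ must be thrown into $M$ (so that the $W$ of (3.2) is the identity on $M_1\cap H^2_{z_2}$ when $\mathcal D_{T^*}\subsetneq\hho$) matches the paper's later remark about $W$. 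The only place to be slightly careful is the parenthetical identification of $M_0$ with $\theta_T H^2(\mathcal D_T)$: for a general $C_{\cdot 0}$ contraction $\theta_T$ need not be inner, so one should phrase this as Beurling--Lax--Halmos giving \emph{some} left-inner $\Theta$, related to $\theta_T$ via (3.2), rather than claiming $\Theta=\theta_T$ outright---but as you note, this side remark is not needed for the statement.
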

The left-inner function $\Theta$ for $M$ and the characteristic function $\theta_{S_1}$ for $S_1$ are related by the identity
\begin{equation*}
\Theta (\lb)=U\theta_{S_1}(\lb)V\oplus W, \ \ \ \lb\in \D, \tag{3.2}
\end{equation*}
where $U$ and $V$ are constant unitaries and $W$ is either a constant unitary or $0$. We will say more about $W$ a bit later.

Let $H^2_{z_i}$ denote the Hardy space over the unit disc with coordinate functions $z_i, i=1, 2$. Note that $H^2_{z_1}$ and $H^2_{z_2}$ are different subspaces of $\hh$. For $\lb\in \D$, the left and right evaluation operators $L(\lb): \hh \to H^2_{z_2}$ and respectively $R(\lb): \hh \to H^2_{z_1}$ are defined by 
\begin{equation*}L(\lb)h(z_2)=h(\lb, z_2),\ \ R(\lb)h=h(z_1, \lb),\ \ h\in \hh. \end{equation*}
The restriction of $L(\lb)$ and $R(\lb)$ to a closed subspace $K\subset \hh$ shall be denoted by $L_K(\lb)$ and $R_K(\lb)$, respectively. For a $z_1$-invariant subspace $M$ we let $M_1=M\ominus z_1M$. Then another natural operator $D: M_1\to N$ is defined by 
\[Dh=\frac{h(z_1, z_2)-h(0, z_2)}{z_1},\ \ \ h\in M_1.\]
Clearly, $D$ is the restriction of the backward shift $T_{z_1}^*$ to $M_1$. The following proposition describes more explicitly the defect operators of $S_1$.
\begin{prop}[\cite{Ya3}]
 Let $M$ be a $z_1$-invariant subspace. Then on $N=\hh\ominus M$ we have

(a) $S_1^*S_1+DD^*=I$,

(b) $S_1S_1^*+L_N^*(0)L_N(0)=I$.
\end{prop}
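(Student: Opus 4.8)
The plan is to prove both identities by computing the quadratic forms $\langle S_1^*S_1 h,h\rangle$ and $\langle S_1S_1^* h,h\rangle$ directly from the orthogonal decomposition $\hh = M\oplus N$, using only three ingredients: that $T_1=T_{z_1}$ is an isometry on $\hh$; that $N=\hh\ominus M$ is invariant under $T_1^*$ (which is equivalent to $M$ being $z_1$-invariant); and that $I-T_1T_1^*=L^*(0)L(0)$ is the orthogonal projection of $\hh$ onto $H^2_{z_2}$, a fact one checks at once on the monomials $z_1^mz_2^n$.

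For (a), I would begin with the decomposition $z_1h = P_M(z_1h)+P_N(z_1h) = P_M(z_1h)+S_1h$ for $h\in N$. Since $T_1$ is an isometry this is a Pythagorean identity, $\|h\|^2 = \|P_M(z_1h)\|^2+\|S_1h\|^2$, so $\langle (I-S_1^*S_1)h,h\rangle = \|P_M(z_1h)\|^2$ for every $h\in N$. The remaining task is to identify the right-hand side as $\langle DD^*h,h\rangle$. First I would verify that the map $V\colon N\to M_1$, $Vh:=P_M(z_1h)$, actually takes values in $M_1=M\ominus z_1M$: for $m\in M$, $\langle P_M(z_1h),z_1m\rangle = \langle z_1h,z_1m\rangle = \langle h,m\rangle = 0$ because $h\perp M$. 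Second, for $w\in M_1$ one has $T_1^*w=Dw\perp M$ (since $w\perp z_1M$), so $Dw\in N$, and then $\langle Vh,w\rangle = \langle z_1h,w\rangle = \langle h,T_1^*w\rangle = \langle h,Dw\rangle$, which shows $V^*=D$, i.e. $V=D^*$. Hence $\|P_M(z_1h)\|^2 = \|D^*h\|^2 = \langle DD^*h,h\rangle$, and therefore $S_1^*S_1+DD^*=I$ on $N$.

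For (b), the key observation is that co-invariance of $N$ gives $S_1^*=P_NT_1^*|_N=T_1^*|_N$, because $T_1^*h\perp M$ for $h\in N$ (the same computation as above, with $T_1^*h$ in place of $Dw$). Consequently $\langle S_1S_1^*h,h\rangle = \|S_1^*h\|^2 = \|T_1^*h\|^2 = \langle T_1T_1^*h,h\rangle = \|h\|^2-\|L(0)h\|^2$, and since $L(0)h=L_N(0)h$ for $h\in N$ this yields $S_1S_1^*+L_N^*(0)L_N(0)=I$ on $N$. The computations are routine; the only steps requiring care are the two range inclusions $P_M(z_1h)\in M_1$ and $Dw\in N$ together with the adjoint identity $V=D^*$, and each of these is immediate from the defining orthogonality relations $h\perp M$ and $w\perp z_1M$. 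So I do not expect a genuine obstacle: the content lies entirely in organizing these elementary facts and in recognizing that the wandering subspace $M_1$ is precisely the range of $P_M$ composed with multiplication by $z_1$, restricted to $N$.
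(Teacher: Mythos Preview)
Your argument is correct. The paper is a survey and does not supply a proof of this proposition, citing instead \cite{Ya3}; your proof via the quadratic forms, the identification $D^*h=P_M(z_1h)\in M_1$, and the co-invariance identity $S_1^*=T_1^*|_N$ is exactly the standard route and contains no gaps.
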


The restriction $L_{M_1}(\lb)$ turns out to coincide with the left inner function $\Theta(\lb)$ for $M$. In this case, the $W$ in (3.2) is exactly the identity operator on the intersection $M_1\cap H^2_{z_2}$. Hence $W=0$ if and only if $M_1\cap H^2_{z_2}$ is trivial. Moreover, it can be shown that
\begin{equation*}L_{M_1}(\lb)=L_{M_1}(0)+\lb L_N(0)(I-\lb S_1^*)^{-1}D,\ \ \lb\in \D.\tag{3.3} \end{equation*}
The following spectral connections hold (cf. \cite{BFKS, Ya3}).
\begin{prop}
Let $M$ be a $z_1$-invariant subspace and $\lb\in \D$. Then

\noindent (a) $S_1-\lb$ is invertible if and only if $L_{M_1}(\lb): M_1\to \hho$ is invertible.

\noindent (b) $S_1-\lb$ is Fredholm if and only if $L_{M_1}(\lb): M_1\to \hho$ is Fredholm, and in this case 
$\operatorname{ind} (S_1-\lb)=\operatorname{ind} L_{M_1}(\lb).$
\end{prop}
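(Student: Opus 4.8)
The plan is to read off the invertibility and Fredholm properties of $S_1-\lb$ from the operator function $L_{M_1}(\lb)$ by combining the realization formula (3.3), the defect identities of Proposition 3.2, and the Nagy-Foias correspondence between a $C_{\cdot 0}$-contraction and its characteristic function.

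First I would note that (3.3) exhibits $L_{M_1}(\lb)$ as a Schur complement: with $\mathbf A=S_1^*$, $\mathbf B=D$, $\mathbf C=L_N(0)$ and $\mathbf D=L_{M_1}(0)$, it reads $L_{M_1}(\lb)=\mathbf D+\lb\,\mathbf C(I-\lb\mathbf A)^{-1}\mathbf B$. Since $\|\lb S_1^*\|<1$, the block $I-\lb S_1^*$ is invertible on $N$, so Gaussian elimination applies to the operator matrix
\[
V_\lb=\begin{pmatrix} I-\lb S_1^* & \lb D\\ -L_N(0) & L_{M_1}(0)\end{pmatrix}\colon N\oplus M_1\longrightarrow N\oplus\hho.
\]
Left-multiplication by $\left(\begin{smallmatrix}I&0\\ L_N(0)(I-\lb S_1^*)^{-1}&I\end{smallmatrix}\right)$ and right-multiplication by $\left(\begin{smallmatrix}I&-(I-\lb S_1^*)^{-1}\lb D\\ 0&I\end{smallmatrix}\right)$, both invertible, carry $V_\lb$ to $\operatorname{diag}(I-\lb S_1^*,\,L_{M_1}(\lb))$. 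As $I-\lb S_1^*$ is invertible, this already shows that $V_\lb$ is invertible (respectively Fredholm, and then of index $k$) if and only if $L_{M_1}(\lb)$ is; this half of the argument uses only (3.3).

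The remaining and harder point is to link $V_\lb$ to $S_1-\lb$ itself rather than to the always-invertible $I-\lb S_1^*$, and this is where Proposition 3.2 is indispensable. The identities $S_1^*S_1+DD^*=I$ and $S_1S_1^*+L_N^*(0)L_N(0)=I$, together with the analogous cross-term identities (which one checks from the orthogonal splitting $h=h(0,z_2)+z_1T_{z_1}^*h$, applied on $M_1$ and on $N$), say precisely that $\left(\begin{smallmatrix}S_1^*&D\\ L_N(0)&L_{M_1}(0)\end{smallmatrix}\right)$ is a unitary colligation with state operator $S_1^*$; equivalently --- and this is the content of (3.2) --- its transfer function $L_{M_1}$ coincides, up to the constant unitaries $U,V$ and the unitary-or-zero summand $W$ (the identity on $M_1\cap H^2_{z_2}$), with the characteristic function $\theta_{S_1}$ of (3.1). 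From here I would either run a second block reduction of $V_\lb$, in which unitarity of the colligation should yield a Cayley-type factorization that exhibits $S_1-\lb$ up to a unitary direct summand and multiplication by invertibles, or --- more economically --- invoke the Nagy-Foias theorem: for a $C_{\cdot 0}$-contraction $T$ (hence a completely non-unitary one) and $\lb\in\D$, one has $\lb\in\rho(T)$ if and only if $\theta_T(\lb)$ is boundedly invertible, while $T-\lb$ is Fredholm if and only if $\theta_T(\lb)$ is, in which case $\operatorname{ind}(T-\lb)=\operatorname{ind}\theta_T(\lb)$ (see \cite{BFKS, Ni}). Applying this to $T=S_1$ and reading off from (3.2) --- neither $U,V$ nor the summand $W$ affects invertibility, Fredholmness, or index --- then yields (a) and (b). I expect this last step to be the main obstacle: whether one verifies the unitarity of the colligation and carries out the Cayley manipulation by hand, or merely cites it, the Nagy-Foias spectral characterization is the substantive input. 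By contrast, the remaining bookkeeping --- that $W$ reconciles the indices when $M_1\cap H^2_{z_2}\ne\{0\}$ (so that $M_1$ and $\hho$ may have unequal dimensions), and that the Fredholm index is additive over direct sums and unchanged under left and right multiplication by invertible operators --- is routine.
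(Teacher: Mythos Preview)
Your proposal is correct and aligns with what the paper does. Note that the paper does not actually supply a proof of this proposition: it sets up the identification of $L_{M_1}(\lb)$ with the left-inner function $\Theta(\lb)$, records (3.2) and (3.3), and then simply writes ``the following spectral connections hold (cf.\ \cite{BFKS, Ya3}).'' The substantive content behind that citation is precisely the Nagy--Foias theorem you invoke --- for a $C_{\cdot 0}$ (hence c.n.u.) contraction $T$ and $\lb\in\D$, one has $\lb\in\rho(T)$ iff $\theta_T(\lb)$ is invertible, and $T-\lb$ is Fredholm iff $\theta_T(\lb)$ is, with equal index --- transferred to $L_{M_1}(\lb)$ via (3.2), where the constant unitaries $U,V$ and the identity summand $W$ on $M_1\cap H^2_{z_2}$ do not disturb invertibility, Fredholmness, or index.

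Your Schur-complement reduction of the block matrix $V_\lb$ is correct and is, in effect, the systems-theoretic first half of a direct proof of the Nagy--Foias result; it is not strictly needed once you are willing to cite \cite{BFKS}, as the paper does. If you wanted to carry the colligation route through without citing, you would indeed need the cross-term identities beyond Proposition~3.2 (namely $S_1D+L_N(0)^*L_{M_1}(0)=0$ and its companions) to make the colligation unitary and then run the Cayley/Redheffer duality to reach $S_1-\lb$; you note this correctly. Either way, your plan and the paper's route coincide.
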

The following fact, which was not observed in the Nagy-Foias theory, is very important to the study of submodules.
\begin{lem}[\cite{Ya1}]
Let $M$ be a $z_1$-invariant subspace. Then $R_{M_1}(\lb)$ is Hilbert-Schmidt for every $\lb\in \D$.
\end{lem}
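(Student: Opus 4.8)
The plan is to exploit the Beurling--Lax--Halmos representation of the $z_1$-invariant subspace $M$ to put $R_{M_1}(\lb)$ in an explicit form, and then to compute its Hilbert--Schmidt norm by interchanging a sum over an orthonormal basis of $M_1$ with an integral over $\T$.

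Assume $M\neq\{0\}$ (otherwise there is nothing to prove), and identify $\hh$ with the $H^2_{z_2}$-valued Hardy space in the variable $z_1$, so that $T_{z_1}$ becomes the shift with coefficient space $E=H^2_{z_2}$. By Beurling--Lax--Halmos there are a Hilbert space $E'$ and a bounded analytic $\mathcal{L}(E',H^2_{z_2})$-valued function $\Theta$ on $\D$, with $\Theta(e^{i\theta})$ an isometry for a.e.\ $\theta$, such that $M=\Theta H^2(E')$ and multiplication by $\Theta$ is an isometry from $H^2(E')$ onto $M$; this $\Theta$ is the left-inner function of $M$. It follows that $M_1=M\ominus z_1M=\Theta E'$ (the image under $\Theta$ of the constant $E'$-valued functions), and that if $\{e_j\}$ is an orthonormal basis of $E'$ then the functions $h_j(z_1,z_2):=[\Theta(z_1)e_j](z_2)$ form an orthonormal basis of $M_1$.

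Now fix $\lb\in\D$ and let $k_\lb(z_2)=(1-\overline{\lb}z_2)^{-1}$ be the reproducing kernel of $H^2_{z_2}$ at $\lb$. For each $j$,
\[
R(\lb)h_j(z_1)=h_j(z_1,\lb)=[\Theta(z_1)e_j](\lb)=\langle \Theta(z_1)e_j,\,k_\lb\rangle_{H^2_{z_2}},
\]
so, because $\Theta\in H^\infty$, $R(\lb)h_j$ belongs to $H^2_{z_1}$ with boundary values $\theta\mapsto\langle \Theta(e^{i\theta})e_j,k_\lb\rangle=\langle e_j,\Theta(e^{i\theta})^*k_\lb\rangle_{E'}$. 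Hence, using Tonelli's theorem and Parseval's identity,
\[
\sum_j\|R(\lb)h_j\|_{H^2_{z_1}}^2
=\sum_j\int_{\T}|\langle e_j,\Theta(e^{i\theta})^*k_\lb\rangle_{E'}|^2\,\frac{d\theta}{2\pi}
=\int_{\T}\|\Theta(e^{i\theta})^*k_\lb\|_{E'}^2\,\frac{d\theta}{2\pi}.
\]
Since $\Theta(e^{i\theta})$ is an isometry for a.e.\ $\theta$, its adjoint is a contraction there, so the last integrand is at most $\|k_\lb\|_{H^2_{z_2}}^2=(1-|\lb|^2)^{-1}$. Therefore $R_{M_1}(\lb)$ is Hilbert--Schmidt, with $\|R_{M_1}(\lb)\|_{HS}^2\leq(1-|\lb|^2)^{-1}$.

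The point to watch is that this conclusion cannot be reached by a crude estimate: $R(\lb)$ is bounded on all of $\hh$ with norm $(1-|\lb|^2)^{-1/2}$, which only gives $\sum_j\|R(\lb)h_j\|^2\leq(1-|\lb|^2)^{-1}\dim M_1$, a useless bound when $\dim M_1=\infty$ (for example when $M=\hh$, where $M_1=H^2_{z_2}$). The gain comes precisely from the inner structure of $M$: the isometry property of $\Theta$ on $\T$, which is what makes $\{h_j\}$ orthonormal, simultaneously forces $\Theta(e^{i\theta})^*$ to be a contraction there, and this is exactly what turns the divergent series $\sum_j\|e_j\|^2$ into the convergent integral of $\|\Theta(e^{i\theta})^*k_\lb\|^2$. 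The remaining verifications---that the slice $R(\lb)h$ lies in $H^2_{z_1}$ with the indicated boundary values, and that the interchange of sum and integral is legitimate (the integrand being non-negative)---are routine.
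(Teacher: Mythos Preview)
Your argument is correct. The paper itself does not supply a proof of this lemma; it only cites \cite{Ya1}. So there is no in-text proof to compare against, but your approach is precisely in the spirit of the surrounding discussion: the paper has just recalled the Beurling--Lax--Halmos representation $M=\Theta H^2(E')$ and noted that $L_{M_1}(\lb)$ coincides with $\Theta(\lb)$, and you exploit the same structure on the other side to control $R_{M_1}(\lb)$. The key identity
\[
\sum_j\|R(\lb)h_j\|_{H^2_{z_1}}^2=\int_{\T}\|\Theta(e^{i\theta})^*k_\lb\|_{E'}^2\,\frac{d\theta}{2\pi}\le \frac{1}{1-|\lb|^2}
\]
is exactly right, and your closing remark pinpoints why naive boundedness of $R(\lb)$ is insufficient. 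One small cosmetic point: you might state explicitly that $z_1\mapsto\langle\Theta(z_1)e_j,k_\lb\rangle$ is a bounded analytic function (since $\|\Theta(z_1)\|\le\|\Theta\|_\infty$), which makes the passage to boundary values immediate; you gesture at this with ``because $\Theta\in H^\infty$'' but it could be spelled out in one clause.
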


Submodules with dimension $dim (M\ominus (z_1M+z_2M))=\infty$ constitute a formidable class. The following corollary is thus a fine application of Proposition 3.3 and Lemma 3.4. 
\begin{corr}[\cite{Ya3}]
If $M$ is a submodule with $dim (M\ominus (z_1M+z_2M))=\infty$, then $\sigma_e(S_1)=\sigma_e(S_2)=\overline{\D}$.
\end{corr}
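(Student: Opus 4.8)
The plan is to prove $\overline{\D}\subseteq\sigma_e(S_1)$; since $S_1$ is a contraction we have $\sigma_e(S_1)\subseteq\sigma(S_1)\subseteq\overline{\D}$, so this suffices, and the claim for $S_2$ will follow by interchanging $z_1$ and $z_2$ (legitimate because $M$, being a submodule, is invariant under both coordinate multiplications). As $\sigma_e(S_1)$ is closed, it is in fact enough to show that $\lambda\in\sigma_e(S_1)$ for every $\lambda\in\D$.

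The first step is an elementary identification of the core. Writing $M_2=M\ominus z_2M$ alongside $M_1=M\ominus z_1M$, a vector $h\in M$ satisfies $h\perp z_1M$ exactly when $h\in M_1$ and $h\perp z_2M$ exactly when $h\in M_2$; hence
\[M\ominus(z_1M+z_2M)=M_1\cap M_2,\]
so the hypothesis says precisely that $M_1\cap M_2$ is infinite dimensional.

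Now fix $\lambda\in\D$ and suppose, toward a contradiction, that $S_1-\lambda$ is Fredholm. By Proposition 3.3, the evaluation operator $L_{M_1}(\lambda)\colon M_1\to\hho$ is Fredholm. A Fredholm operator restricted to an infinite dimensional subspace is never compact (if $BL_{M_1}(\lambda)=I+K$ with $K$ compact, then the identity on $M_1\cap M_2$ would be compact), so $L_{M_1}(\lambda)\big|_{M_1\cap M_2}$ is not compact. On the other hand $M_1\cap M_2\subseteq M_2$, and for $h$ in this subspace $L_{M_1}(\lambda)h=h(\lambda,z_2)=L_{M_2}(\lambda)h$; applying Lemma 3.4 with the roles of $z_1$ and $z_2$ interchanged shows $L_{M_2}(\lambda)\colon M_2\to\hho$ is Hilbert-Schmidt, hence compact, and so is its restriction to $M_1\cap M_2$, which coincides with $L_{M_1}(\lambda)\big|_{M_1\cap M_2}$. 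This contradiction forces $S_1-\lambda$ to be non-Fredholm, i.e. $\lambda\in\sigma_e(S_1)$. The argument for $S_2$ is the same with $z_1$ and $z_2$ exchanged: Proposition 3.3 reduces Fredholmness of $S_2-\lambda$ to that of $R_{M_2}(\lambda)\colon M_2\to H^2_{z_1}$, while Lemma 3.4 in its stated orientation gives that $R_{M_1}(\lambda)\colon M_1\to H^2_{z_1}$ is Hilbert-Schmidt; since $R_{M_2}(\lambda)$ and $R_{M_1}(\lambda)$ agree on $M_1\cap M_2$, the restriction of $R_{M_2}(\lambda)$ there is compact, again contradicting Fredholmness.

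I do not expect a genuine obstacle once the core is recognized as $M_1\cap M_2$; the one point demanding care is the bookkeeping of the two model structures — which evaluation map ($L$ on $M_1$, or $R$ on $M_2$) is Fredholm-equivalent to $S_i-\lambda$ via Proposition 3.3 and which one is Hilbert-Schmidt via Lemma 3.4 — together with the trivial but crucial observation that the relevant $L$'s, respectively $R$'s, restrict to literally the same operator on the core. The only analytic input beyond Proposition 3.3 and Lemma 3.4 is the soft fact that a Fredholm operator cannot be compact on an infinite dimensional subspace.
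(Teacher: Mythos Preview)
Your proof is correct and follows essentially the same route as the paper's: identify the defect space as $M_1\cap M_2$, use Lemma~3.4 (or its swap) to make one evaluation operator Hilbert--Schmidt on this intersection, and invoke Proposition~3.3 (or its swap) to conclude the other evaluation operator cannot be Fredholm. The only cosmetic difference is that the paper picks an orthonormal basis $\{f_n\}$ of $M_1\cap M_2$ and argues $\|R_{M_2}(\lambda)f_n\|=\|R_{M_1}(\lambda)f_n\|\to 0$ directly from the Hilbert--Schmidt estimate, whereas you package this as the abstract fact that a Fredholm operator cannot restrict to a compact operator on an infinite-dimensional subspace---the content is identical.
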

\begin{proof}
Let $\{f_n\mid n=1, 2, \cdots\}$ be an orthonormal basis for $M\ominus (z_1M+z_2M)=M_1\cap M_2$. Since $R_{M_1}(\lb)$ is Hilbert-Schmidt by Lemma 3.4, we have 
\[\sum_{n=1}^{\infty}\|R_{M_1}(\lb)f_n\|^2\leq \|R_{M_1}(\lb)\|^2_{HS},\]
where $\|\cdot\|_{HS}$ stands for the Hilbert-Schmidt norm. This implies that \[\lim_{n\to \infty}\|R_{M_2}(\lb)f_n\|=\lim_{n\to \infty}\|R_{M_1}(\lb)f_n\|=0.\]
This shows that $R_{M_2}(\lb)$ is not Fredholm, and therefore $\lb\in \sigma_e(S_2)$ by the parallel statement of Proposition 3.3 for $S_2$. Proof for $S_1$ is similar.
\end{proof}

The space $M\ominus (z_1M+z_2M)$ is sometimes called the defect space for the submodule $M$ and its importance will become evident later. Its dimension is less than or equal to the rank of $M$ (\cite{DY2}). For Rudin's submodule $M$ in Example 2.2, it is shown that the dimension of  $M\ominus (z_1M+z_2M)$ is $2$, though $\operatorname{rank}(M)=\infty$ (\cite{Ya4}). Hence the defect space is in general not a generating set for $M$, unlike the situation for the Bergman space $L^2_a(\D)$ (\cite{ARS}) or the classical Hardy space $\hho$. It was a question due to Nakazi (\cite{Na1}) whether every rank one submodule $M=[f]$ is generated by the defect spae $M\ominus (z_1M+z_2M)$. The question is solved only recently.

\begin{thm}[Izuchi \cite{Iz2}]
There exists a nontrivial function $f\in \hh$ such that $[f]\ominus(z_1[f]+z_2[f])$ does not generate $[f]$.
\end{thm}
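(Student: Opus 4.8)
\smallskip
\noindent\emph{A proposed strategy.}\quad
Since $[f]$ is generated by the single function $f$ it has rank one, so by the inequality $\dim([f]\ominus(z_1[f]+z_2[f]))\le\operatorname{rank}[f]$ of \cite{DY2} the defect space $E_f:=[f]\ominus(z_1[f]+z_2[f])$ is at most one-dimensional; and for $f\neq 0$ it is nonzero (otherwise $[f]=\overline{z_1[f]+z_2[f]}$, which via the evaluation operators $L(0),R(0)$ forces $[f]\subseteq z_1^kz_2^k\hh$ for all $k$, i.e.\ $[f]=\{0\}$). Thus $E_f=\C g$ for a single $g\in[f]$; and since $f\R=\C f+(z_1f\R+z_2f\R)$ as vector spaces one has $[f]=\C f+\overline{z_1[f]+z_2[f]}$, so $g$ may be taken to be $P_{E_f}f$, the component of $f$ orthogonal to $z_1[f]+z_2[f]$. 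As $[g]\subseteq[f]$ automatically, the theorem amounts to exhibiting one $f$ for which $[g]\subsetneq[f]$.

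The plan is to build $f$ so that $g$ carries an extra divisor or zero that $f$ itself does not. I would aim for either (i) $g\in\varphi(z_1)\hh$ for some nonconstant one-variable inner function $\varphi$ with $f\notin\varphi(z_1)\hh$, or (ii) $g(\alpha)=0$ for some $\alpha\in\D^2$ with $f(\alpha)\neq 0$. In case (i) the submodule $[g]$ is contained in $\varphi(z_1)\hh\cap[f]$, and in case (ii) in $\{h\in[f]:h(\alpha)=0\}$; each of these is a proper submodule of $[f]$ precisely because $f$ does not belong to it, so $[g]\subsetneq[f]$ follows at once. The candidate $f$ would be an explicit infinite Blaschke-type product, or a single-generator perturbation in the spirit of Rudin's Example~2.2, whose zero set clusters so as to impose the intended divisibility (or extra zero) on $g$. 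Keeping $f$ genuinely two-variable and non-product is essential: for $\hho$ Beurling's theorem makes $g$ the inner generator of $[f]$, for a product submodule $\varphi_1(z_1)\varphi_2(z_2)\hh$ one gets $g=\varphi_1\varphi_2$, and for the Bergman space $L^2_a(\D)$ the result of \cite{ARS} again makes the defect space cyclic --- in all these cases $g$ generates.

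To locate $g$ for the chosen $f$ I would use the apparatus of Section~3. Because $E_f$ lies in both wandering subspaces $M_1=[f]\ominus z_1[f]$ and $M_2=[f]\ominus z_2[f]$, the vector $g$ is their common element; $M_1$ is governed by its left-inner function $L_{M_1}(\cdot)$ through~(3.3), and Propositions~3.2--3.3 together with the Hilbert--Schmidt property in Lemma~3.4 supply the invertibility, Fredholmness and compactness information needed to control $z_1[f]$ and its closure, hence $g=f-w$ with $w\in\overline{z_1[f]+z_2[f]}$. Rudin's Nevanlinna-class data --- the singular measure $d\sigma_h$ and the sets $Z(M),Z_\partial(M)$ --- would be used to detect the extra inner factor or zero picked up by $g$, and once $g$ is seen to sit inside a proper submodule the rigidity results (Theorem~2.8, Example~2.9) certify that $[g]\neq[f]$.

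The main obstacle is exactly that $g$ is defined only implicitly, as an orthogonal projection. The crux is to arrange that passing from $f$ to $g$ strips off a nontrivial one-variable inner factor --- equivalently, that $\overline{z_1[f]+z_2[f]}$ absorbs precisely the part of $f$ lacking the intended divisor --- while $[f]$ stays genuinely two-variable and rank one. This calls for a sharp quantitative estimate on the closure $\overline{z_1[f]+z_2[f]}$ for the specific $f$, and striking that balance is presumably where Izuchi's construction does its real work, and why the question resisted for so long.
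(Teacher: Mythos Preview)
The survey paper does not prove this theorem at all; it merely states the result and cites Izuchi's paper \cite{Iz2}. So there is no ``paper's own proof'' to compare against --- only Izuchi's original argument, which the survey does not reproduce.

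That said, your proposal is not a proof either, and you acknowledge as much. The preliminary reductions are fine: the defect space $E_f$ is indeed one-dimensional and spanned by $g=P_{E_f}f$, and the theorem does reduce to exhibiting a single $f$ with $[g]\subsetneq[f]$. Your two mechanisms (extra inner divisor, extra zero) are the natural ways $[g]$ could be strictly smaller. But the entire content of the theorem lies in the construction of $f$, and you have not supplied one; the closing paragraph concedes this explicitly. The tools you list from Section~3 --- the left-inner function, Propositions~3.2--3.3, Lemma~3.4, the Nevanlinna data --- describe $M_1$ and $M_2$ separately and give Fredholm/Hilbert--Schmidt information, but none of them gives you a handle on the single vector $g\in M_1\cap M_2$, nor on its divisibility properties, without already knowing $[f]$ quite explicitly. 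So the gap is not a technical detail: the actual existence argument is missing.

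For orientation, Izuchi's route (as the title of \cite{Iz2} indicates) goes through \emph{weighted} Hardy spaces over the bidisc and the cyclicity of their reproducing kernels. Rather than building a Blaschke-type $f$ and trying to compute $g$ directly, one recasts the problem: a rank-one submodule $[f]$, viewed with its own reproducing kernel, is a weighted Hardy space, and $g$ is (up to normalization) its reproducing kernel at the origin; the question of whether $g$ generates $[f]$ becomes the question of whether that kernel is cyclic. Producing weights for which the kernel is \emph{not} cyclic is then a concrete one-parameter problem, and transferring the example back to $H^2(\mathbb{D}^2)$ yields the desired $f$. This sidesteps precisely the obstacle you identify --- that $g$ is only implicitly defined as a projection --- by choosing coordinates in which $g$ is the canonical object and $f$ is what one constructs.
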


Since $S_1$ on a quotient module $N$ is much less general than the $S_1$ in Theorem 3.1, the reformulation of Nagy-Foias theory in $\hh$ thus gives rise to the following

\vspace{3mm}

\noindent {\bf Problem 1}. Characterize $C_{\cdot 0}$-class contraction $T$ which is unitarily equivalent to $S_1$ on a quotient module $N\subset \hh$.

\subsection{Important examples}
 One well-known example to Problem 1 is the Bergman shift $B$, i.e., multiplication by the coordinate function $w$ on the classical Bergman space $L^2_a(\D)$. 
\begin{exmp}
Let $e_n'=\sqrt{n+1}w^n, n\geq 0$ be the standard orthonormal basis for $L^2_a(\D)$. Then it is easy to check that 
\[Be_n'=\sqrt{\frac{n+1}{n+2}}e'_{n+1},\ \ n\geq 0.\]
If $M=[z_1-z_2]$ then $N=M^{\perp}$ has the orthonormal basis
\[e_n(z)=\frac{1}{\sqrt{n+1}}\frac{z_1^{n+1}-z_2^{n+1}}{z_1-z_2},\ \ n=0, 1, \cdots.\]
One checks that $S_1e_n=\sqrt{\frac{n+1}{n+2}}e_{n+1},$ which verifies that $S_1$ is unitarily equivalent to $B$.
\end{exmp}
A remarkable application of Example 3.7 is that it leads to alternative approaches to some important problems on the Bergman shift $B$. For instance, in \cite{SZ} Sun and Zheng reproved the Aleman-Richter-Sundberg theorem (\cite{ARS}) that every $B$-invariant subspace $\mathcal M\subset L^2_a(\D)$ is generated by its defect space $\mathcal M\ominus w\mathcal M$, or in other words, $\mathcal M=[\mathcal M\ominus w\mathcal M]$. Reducing subspaces of $\phi(B)$, where $\phi$ is a finite Blaschke product, can also be studied via $\hh$ (\cite{GSZZ, SZZ}).

Another example to Problem 1 is the $C_0$-class operators. For a single variable inner function $\theta\in \hho$, the quotient $K(\theta)=\hho\ominus \theta\hho$ is often called the model space. The operator $S(\theta)$ defined by \[S(\theta)h=P_{K(\theta)}(zh),\ \ h\in K(\theta),\] is often called the associated Jordan block and it has been very well-studied (\cite{Be1}). Clearly, $S(\theta)$ is in $C_0$-class. A sequence of inner functions $(q_j)_{j=0}^{\infty}$ in $H^2_{z_1}$ is called an inner sequence if $q_{j+1}$ divides $q_{j}$ for each $j$. An inner-sequence-based submodule is of the form
\[M= \bigoplus^{\infty}_{j = 0} q_{j} H^2_{z_1}z_2^j,\] where $(q_{j})$ is an inner sequence. In this case,
\[N=\hh\ominus M=\bigoplus^\infty _{j=0}(H^{2}_{z_1} \ominus q_{j}H^{2}_{z_1})z_2^j,\]
and hence $S_1$ is unitarily equivalent to $ \oplus^{\infty} _{j=0}S(q_j)$. 
The following fact follows from the observation above and a classical result about $C_0$-class operators (\cite{Be1, BFKS, SY}).
\begin{thm}
Every $C_0$-class contraction is quasi-similar to $S_1$ for some inner-sequence-based submodule $N^{\perp}$.
\end{thm}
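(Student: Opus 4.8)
The plan is to reduce the statement to the classical Jordan model theorem for $C_0$-class contractions together with the structural description of inner-sequence-based submodules recorded just above the statement.

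First I would invoke the Jordan model theorem (see \cite{Be1, BFKS, SY}): every $C_0$-class contraction $T$ on a separable Hilbert space is quasi-similar to an operator of the form $\bigoplus_{j\geq 0}S(\theta_j)$, where $(\theta_j)$ is a sequence of inner functions in $\hho$ with $\theta_{j+1}$ dividing $\theta_j$ for each $j$; this operator is the Jordan model of $T$ and is uniquely determined by $T$. If the model has only finitely many nontrivial blocks $\theta_0,\dots,\theta_k$, I would extend the sequence by setting $\theta_j=1$ for $j>k$: since $S(1)$ acts on the zero space $\hho\ominus\hho$, these extra blocks contribute nothing, while the divisibility relations are preserved. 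Thus one may always take the Jordan model as an honest infinite direct sum $\bigoplus_{j=0}^{\infty}S(\theta_j)$.

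Next I would view each $\theta_j$ as an inner function $q_j\in H^2_{z_1}$, so that $(q_j)_{j\geq 0}$ is an inner sequence in the sense of the preceding paragraph of the text. Forming the inner-sequence-based submodule $M=\bigoplus_{j=0}^{\infty}q_jH^2_{z_1}z_2^j$ and setting $N=\hh\ominus M$, the computation quoted immediately before the statement gives $N=\bigoplus_{j=0}^{\infty}(H^2_{z_1}\ominus q_jH^2_{z_1})z_2^j$ and shows that the compression $S_1$ of multiplication by $z_1$ to $N$ is unitarily equivalent to $\bigoplus_{j=0}^{\infty}S(q_j)=\bigoplus_{j=0}^{\infty}S(\theta_j)$, i.e.\ to the Jordan model of $T$.

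Finally, since unitary equivalence is a special case of quasi-similarity and quasi-similarity is transitive, chaining the two relations yields that $T$ is quasi-similar to $S_1$ on the quotient module $N$, whose orthogonal complement $N^{\perp}=M$ is inner-sequence-based, as claimed. The only genuine input is the Jordan model theorem, which is the deep classical fact being borrowed here; the remainder is bookkeeping, the one point needing a word of care being the passage from a possibly finite Jordan model to an honest infinite direct sum, handled above by padding with $\theta_j=1$.
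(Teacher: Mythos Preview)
Your proposal is correct and follows exactly the approach indicated in the paper: the sentence immediately preceding the theorem says that the result ``follows from the observation above and a classical result about $C_0$-class operators (\cite{Be1, BFKS, SY})'', where the observation is precisely that $S_1$ on an inner-sequence-based quotient is unitarily equivalent to $\bigoplus_{j\geq 0}S(q_j)$ and the classical result is the Jordan model theorem. Your write-up simply spells out these two ingredients and the routine padding by $\theta_j=1$ to ensure an infinite inner sequence.
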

Here, two operators $T_i$ on respective Banach spaces $H_i, i= 1, 2$ are said to be quasi-similar if there are bounded operators $A: H_1\to H_2$ and $B: H_2\to H_1$, both injective with dense range, such that $AT_1=T_2A$ on $H_1$ and $T_1B=BT_2$ on $H_2$. 

The rank of inner-sequence-based submodule is carefully studied in the case $q_0$ is a Blaschke product (\cite{III8}). Setting $\xi_n=q_n/q_{n+1}, n=0, 1, \cdots,$ then each $\xi_n$ is also a Blaschke product. Then for every $\alpha\in \D$, we define \[N_{\alpha}=\{n\in \mathbb N\mid \xi_{n-1}(\alpha)=0\},\]
and denote its cardinality by $|N_{\alpha}|$. The following theorem has a rather difficult proof.
\begin{thm}[K. J. Izuchi, K. H. Izuchi and Y. Izuchi \cite{III8}]
Let $M$ be an inner-sequence-based submodule with $q_0$ being a Blaschke product. Then
\[\operatorname{rank}M=\sup_{\alpha\in \D}|N_{\alpha}|+1.\]
\end{thm}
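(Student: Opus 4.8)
The plan is to prove two matching inequalities. For the lower bound $\operatorname{rank}M\ge\sup_{\alpha\in\D}|N_\alpha|+1$, I would fix $\alpha\in\D$, set $p=(\alpha,0)\in\D^2$ with maximal ideal $\mathfrak m_p=(z_1-\alpha,z_2)\subset\R$, and invoke the elementary principle that if $M=[f_1,\dots,f_r]$ then the classes of $f_1,\dots,f_r$ span the fiber $M/\overline{\mathfrak m_pM}=M/\overline{(z_1-\alpha)M+z_2M}$, so $\operatorname{rank}M\ge\dim\bigl(M/\overline{\mathfrak m_pM}\bigr)$. Then I would compute this fiber. Writing $b_\alpha$ for the Blaschke factor at $\alpha$ and using the orthogonal decomposition $M=\bigoplus_{j\ge0}q_jH^2_{z_1}z_2^j$, one has $(z_1-\alpha)q_jH^2_{z_1}=b_\alpha q_jH^2_{z_1}$, and the $j$-th component of $\overline{\mathfrak m_pM}$ equals $\overline{b_\alpha q_jH^2_{z_1}+q_{j-1}H^2_{z_1}}=q_j\gcd(b_\alpha,\xi_{j-1})H^2_{z_1}$ (with $\xi_{-1}:=0$); here one checks, using that $\xi_{j-1}$ is a Blaschke product, that the indicated sum is already closed. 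Hence the $j$-th summand of $M/\overline{\mathfrak m_pM}$ is isomorphic to $K\bigl(\gcd(b_\alpha,\xi_{j-1})\bigr)$, which is one-dimensional when $\xi_{j-1}(\alpha)=0$ (that is, $j\in N_\alpha$, or $j=0$) and is zero otherwise, so $\dim\bigl(M/\overline{\mathfrak m_pM}\bigr)=|N_\alpha|+1$; taking the supremum over $\alpha$ finishes this half.

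For the upper bound the first generator is $f_0=q_0$, which generates $q_0\hh=\bigoplus_{j\ge0}q_0H^2_{z_1}z_2^j$, so it remains to produce $m:=\sup_{\alpha}|N_\alpha|$ further functions whose images generate $M/[q_0]$. Under the isomorphisms $q_jH^2_{z_1}z_2^j/q_0H^2_{z_1}z_2^j\cong K(w_j)$ with $w_j:=q_0/q_j=\xi_0\xi_1\cdots\xi_{j-1}$, this quotient module becomes $\bigoplus_{j\ge1}K(w_j)$, on which $z_1$ acts as the compressed shift $S(w_j)$ on the $j$-th summand and $z_2$ carries the $j$-th summand into the $(j+1)$-st by $v\mapsto P_{K(w_{j+1})}(\xi_jv)$; since $w_{j+1}=w_j\xi_j$ one has the orthogonal splitting $K(w_{j+1})=K(w_j)\oplus w_jK(\xi_j)$, so at level $j+1$ there are exactly $\#Z(\xi_j)$ genuinely new directions, namely $w_jK(\xi_j)$. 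I would then build the $m$ generators level by level: keep a running record of which one-dimensional subspaces of each $K(w_j)$ are already reachable by applying $z_1$-polynomials to, and pushing up by $z_2$ from, the lower levels and the generators introduced so far, and bring in a new generator exactly when a level introduces a new Blaschke factor, choosing it by a Chinese-remainder/interpolation argument so that one function can simultaneously carry the new directions that appear at distinct points $\alpha\ne\beta$. The count $m$ is dictated precisely by the fact that, for each fixed $\alpha$, the total number of new $b_\alpha$-factors appearing over all levels equals $|N_\alpha|\le m$.

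The main obstacle is the upper bound, and within it two features make the construction delicate. First, the $z_2$-action on $M/[q_0]$ is not the naive inclusion of model spaces but the twisted map $v\mapsto P_{K(w_{j+1})}(\xi_jv)$, so the old and new directions at each level are entangled, and one must use fine properties of compressed shifts, their cyclic vectors, and the divisibility $w_{j+1}=w_j\xi_j$ to pin down exactly which directions a previously chosen generator reaches after one $z_1$-polynomial and one $z_2$-push. Second, when $q_0$ is an infinite Blaschke product infinitely many levels and infinitely many points are in play at once, so the $m$ generators must be assembled as convergent series, and a separate approximation step is needed — via the $N(\D^2)$ function theory recalled above, or a direct reproducing-kernel computation localized at each $\alpha$ — to guarantee that the closed submodule generated by $f_0,g_1,\dots,g_m$ is all of $M$ and not a proper submodule. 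Reconciling the finiteness of the generating set with these infinitely many local constraints is the heart of the proof.
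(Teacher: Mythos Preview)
The paper does not contain a proof of this theorem. It is a survey, and Theorem~3.9 is simply quoted from \cite{III8} with the remark that it ``has a rather difficult proof''; the paper then uses the statement to verify that Rudin's submodule in Example~2.2 has infinite rank. So there is no proof in the paper to compare your proposal against.

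That said, a few comments on your proposal. Your lower bound is correct and cleanly executed: the fiber computation at $(\alpha,0)$ is accurate, and the step you flagged---that $b_\alpha q_jH^2_{z_1}+\xi_{j-1}q_jH^2_{z_1}$ is already closed---holds here because $b_\alpha$ is a single Blaschke factor, so either $\xi_{j-1}(\alpha)=0$ and the sum is $b_\alpha q_jH^2_{z_1}$, or $\xi_{j-1}(\alpha)\ne 0$ and one can write $1=b_\alpha f+\xi_{j-1}/\xi_{j-1}(\alpha)$ explicitly, giving the full $q_jH^2_{z_1}$. The passage from ``$f_1,\dots,f_r$ generate $M$'' to ``their classes span $M/\overline{\mathfrak m_pM}$'' is also fine once you note the quotient is finite-dimensional and the image of $\mathcal R f_1+\cdots+\mathcal R f_r$ is dense in it.

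The upper bound is, as you recognize, the substantial part, and what you have written is a strategic outline rather than a proof. Two points where your sketch would need sharpening: (i) your description of the $z_2$-action on $M/[q_0]$ as $v\mapsto P_{K(w_{j+1})}(\xi_j v)$ is not quite right---the inclusion $q_jH^2\hookrightarrow q_{j+1}H^2$ followed by the quotient gives the \emph{inclusion} $K(w_j)\hookrightarrow K(w_{j+1})$ on the model-space side, not a twisted map by $\xi_j$, so the bookkeeping of ``new directions'' should be redone with this in mind; (ii) the Chinese-remainder/interpolation scheme you describe for assembling the $m$ generators is the crux and is exactly where the original proof in \cite{III8} invests most of its effort---convergence of the infinite series defining each $g_k$, together with the verification that $[q_0,g_1,\dots,g_m]=M$, requires careful control of the Blaschke data and is not a routine approximation. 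Your identification of these as the main obstacles is accurate; turning the outline into a proof is a nontrivial project, consistent with the paper's remark that the argument is ``rather difficult.''
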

It is interesting to observe that Rudin's submodule in Example 2.2 is in fact inner-sequence-based with inner sequence defined by 
\[q_0(z_1)=\prod_{n=1}^{\infty}b_n^n(z_1), \ \ \text{and}\ \ q_j=q_{j-1}/\prod_{n=j}^{\infty}b_n^n(z_1),\ \ j\geq 1,\]
where $b_n(z_1)=\frac{z_1-\alpha_n}{1-\overline{\alpha_n}z_1},\ n\geq 0$. Hence $\xi_j(z_1)=\prod_{n=j+1}^{\infty}b_n^n(z_1),\ \ j\geq 0$. Therefore, by Theorem 3.9 we have
$\operatorname{rank}M\geq |N_{\alpha_n}|+1=n+1,\ \ \forall n\in \mathbb N.$
This verifies that Rudin's submodule in Example 2.2 has infinite rank.

A distinguished property of inner-sequence-based submodule is that the characteristic functions for $S_1$ and $S_2$ are both simple and elegant (\cite{QY1}). The readers shall find great fun computing for themselves.  Due to its simple structure, inner-sequence-based submodule is useful for many purposes. We refer the readers to \cite{Se1, Yi1, YY} for some of the applications. 

\section{Commutators}

It is not hard to check that $T_1$ and $T_2$ doubly commute in the sense that $[T_1, T_2]=[T^*_1, T_2]=0$. Given a submodule $M$, we denote by $R^M_1$ and $R^M_2$ the restrictions of $T_1$ and $T_2$ to $M$, and by $S^N_1$ and $S^N_2$ the compression of $T_1$ and $T_2$ to the quotient module $N=\hh \ominus M$, respectively, i.e., 
\begin{align*} 
S^N_1 f=P_N(z_1f) ,\ \ S^N_2f=P_Nz_2f,\ \ \ f \in N.
\end{align*} 
It is easy to see that two submodules $M$ and $M'$ are unitarily equivalent if and only if the pairs  $(R^M_1, R^M_2)$ and $(R^{M'}_1, R^{M'}_2)$ are unitarily equivalent in the sense that there exists a unitary $U:M\to M'$ such that
$UR^M_i=R^{M'}_iU,\ \ i=1, 2.$
This fact, together with Theorem 2.10, indicates that all information of submodule $M$ is contained in the pairs $(R^M_1, R^M_2)$ and $(S^N_1, R^N_2)$. Moreover, the pairs $(R^M_1, R^M_2)$ and $(S^N_1, S^N_2)$ must also be intimately connected because they are both faithful representations of $M$. When a submodule $M$ is fixed we shall write the two pairs simply as $R=(R_1, R_2)$ and $S=(S_1, S_2)$. They are the primary subject of study in the operator theory in $\hh$.

\subsection{Double commutativity}

The pairs $(R_1, R_2)$ and $(S_1, S_2)$ are both commuting pairs, but they doubly commute only for very special submodules $M$. 

\begin{thm}[Gatage-Mandrekar \cite{GM}; Mandrekar \cite{Ma}]
Let $M$ be a submodule. Then the commutator $[R^*_1, R_2]=0$ if and only if $M=\theta \hh$ for some inner function $\theta\in \hh$.
\end{thm}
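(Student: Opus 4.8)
The plan is to prove the two implications separately. The implication $M=\theta\hh \Rightarrow [R_1^*,R_2]=0$ is immediate: by the Corollary following Theorem 2.6, multiplication by an inner function $\theta$ is a unitary module map from $\hh$ onto $M=\theta\hh$ that intertwines $(T_1,T_2)$ with $(R_1,R_2)$, and since $T_1,T_2$ doubly commute on $\hh$, so do $R_1,R_2$ on $M$; in particular $[R_1^*,R_2]=0$. So the content is the converse.

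Assume $[R_1^*,R_2]=0$ for a nonzero submodule $M$. First, $R_1$ and $R_2$ are commuting isometries on $M$, being restrictions of the isometries $T_1,T_2$ on $\hh$. Taking adjoints in $R_1^*R_2=R_2R_1^*$ also yields $R_2^*R_1=R_1R_2^*$, so $(R_1,R_2)$ is a \emph{doubly commuting} pair of isometries, and the crucial step is a two-variable Wold decomposition. Since $R_2$ commutes with both $R_1$ and $R_1^*$, it commutes with the orthogonal projection of $M$ onto $z_1^nM\ominus z_1^{n+1}M=z_1^nM_1$, where $M_1:=M\ominus z_1M$; hence each $z_1^nM_1$ reduces $R_2$. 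Because $M\subset\hh$ and $\bigcap_n z_1^n\hh=\{0\}$, the isometry $R_1$ has trivial unitary part, so $M=\bigoplus_{n\ge 0}z_1^nM_1$. Restricting $R_2$ to the reducing subspace $M_1$ gives an isometry on $M_1$ whose unitary part $\bigcap_m z_2^mM_1$ is contained in $\bigcap_m z_2^m\hh=\{0\}$, so a second Wold decomposition gives $M_1=\bigoplus_{m\ge 0}z_2^mW$ with $W:=M_1\ominus z_2M_1$. A short check shows $z_2M\cap M_1=z_2M_1$, so $W=M\ominus(z_1M+z_2M)$ is the defect space; altogether one obtains the orthogonal decomposition $M=\bigoplus_{n,m\ge 0}z_1^nz_2^mW$.

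It remains to identify $W$: I would show $\dim W=1$ and that its unit vector is inner. Fix a unit vector $\phi\in W$. Evaluating $\langle z_1^nz_2^m\phi,\,z_1^{n'}z_2^{m'}\phi\rangle$ in $L^2(\T^2)$ and using the orthonormality of the family $\{z_1^nz_2^m\phi\}$, one finds $\int_{\T^2}z_1^kz_2^l|\phi|^2\,dm=0$ for every $(k,l)\ne(0,0)$ (take $k=n-n'$, $l=m-m'$); hence $|\phi|=1$ a.e. on $\T^2$, i.e., $\phi$ is inner. If $\dim W\ge 2$, choose orthonormal $\phi,\psi\in W$; then $z_1^nz_2^m\phi\perp z_1^{n'}z_2^{m'}\psi$ for \emph{all} $n,m,n',m'\ge 0$, which forces every Fourier coefficient of $\phi\overline{\psi}$ to vanish, so $\phi\overline{\psi}=0$ a.e. on $\T^2$ --- impossible since $|\phi|=|\psi|=1$ a.e. Therefore $W=\C\theta$ for an inner function $\theta$, and since multiplication by an inner function is an isometry on $\hh$, $M=\overline{\operatorname{span}}\{z_1^nz_2^m\theta:n,m\ge 0\}=\theta\hh$.

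I expect the main obstacle to be the Wold-decomposition step: one must check that double commutativity is exactly what makes the two one-variable Wold decompositions compatible, so that $\{z_1^nz_2^mW\}_{n,m\ge 0}$ is a genuine orthogonal direct-sum decomposition of $M$ and that the wandering space $W$ coincides with the defect space $M\ominus(z_1M+z_2M)$ --- this is essentially the Wold decomposition for a doubly commuting pair of isometries, specialized to $\hh$. Once that decomposition is available, the reduction to a one-dimensional, inner $W$ is the short Fourier-coefficient argument above.
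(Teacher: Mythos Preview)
Your argument is correct. The easy direction is fine, and for the converse the two-step Wold decomposition for a doubly commuting pair of isometries (using that $R_1$ and $R_2$ are pure because $\bigcap_n z_i^n\hh=\{0\}$) is exactly the right tool; your verification that $W=M_1\ominus z_2M_1$ coincides with the defect space $M\ominus(z_1M+z_2M)$ is sound, and the Fourier-coefficient argument showing that any unit vector in $W$ is inner and that $\dim W\le 1$ is the standard conclusion.

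As for comparison: the paper is a survey and does not supply its own proof of this theorem---it simply records the result and attributes it to Ghatage--Mandrekar and Mandrekar. Your proof is essentially Mandrekar's original one: reduce to a doubly commuting pair of pure isometries, apply the (iterated) Wold decomposition to obtain a wandering subspace $W$ with $M=\bigoplus_{n,m\ge 0}z_1^nz_2^mW$, and then use the orthogonality relations on $\mathbb T^2$ to force $W$ to be one-dimensional and spanned by an inner function. So there is no ``different route'' to report; you have reconstructed the standard argument.

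One small presentational point: in your paragraph on obstacles you worry whether the two Wold decompositions are compatible. You have in fact already handled this: once $R_2$ reduces each $z_1^nM_1$ (which follows from $R_2$ commuting with both $R_1$ and $R_1^*$), the second Wold decomposition takes place entirely inside $M_1$, and the global orthogonal decomposition $M=\bigoplus_{n,m}z_1^nz_2^mW$ is then automatic. You might also note explicitly that $W\ne\{0\}$ (since $M\ne\{0\}$ and both isometries are pure), so that the choice of a unit vector $\phi\in W$ is legitimate.
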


In view of Corollary 2.7, this theorem implies that a submodule $M$ is unitarily equivalent to $\hh$ if and only if $[R^*_1, R_2]= 0$. The case $[S^*_1, S_2]=0$ is considered in \cite{DY2}, and a complete solution is obtained in \cite{INS1}.

\begin{thm}[Izuchi, Nakazi and Seto \cite{INS1}]
Let $M$ be a submodule. Then $[S_1^{*}, S_2]=0$ if and only if $M$ is of the form
\[M=q_1(z_1) \hh + q_2(z_2) \hh,\]
where $q_1$ and $q_2$ are either $0$ or one variable inner functions. 
\end{thm}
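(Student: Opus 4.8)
The plan is to handle the two implications separately, with essentially all the work in the ``only if'' part. For the ``if'' direction, write $\hh=H^2_{z_1}\otimes H^2_{z_2}$ and $K(q_i)=H^2_{z_i}\ominus q_iH^2_{z_i}$, with the convention $K(0):=H^2_{z_i}$. If $M=q_1(z_1)\hh+q_2(z_2)\hh$, one checks directly that $N=\hh\ominus M=K(q_1)\otimes K(q_2)$ and that, under this identification, $S_1=S(q_1)\otimes I$ and $S_2=I\otimes S(q_2)$; hence $[S_1^*,S_2]=S(q_1)^*\otimes S(q_2)-S(q_1)^*\otimes S(q_2)=0$. Nothing beyond the tensor structure of $\hh$ is used here.

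For the converse, the first step is to convert the operator identity into geometry inside $\hh$. Since $z_1M\subseteq M$ we get $T_1^*N\subseteq N$, so $S_1^*=T_1^*|_N$ and, likewise, $S_2^*=T_2^*|_N$; then using $T_1^*T_2=T_2T_1^*$ one computes, for $f,g\in N$,
\[\langle[S_1^*,S_2]f,g\rangle=-\langle P_M(z_2f),P_M(z_1g)\rangle .\]
Thus $[S_1^*,S_2]=0$ is equivalent to $F_1\perp F_2$, where $F_i:=\overline{P_M(z_iN)}\subseteq M$. I would then identify these subspaces: one shows $P_M(z_1N)\subseteq M_1:=M\ominus z_1M$, and that $m_1\in M_1$ is orthogonal to $P_M(z_1N)$ exactly when $m_1\perp z_1\hh$, i.e. when $m_1$ is a function of $z_2$ alone; since $M\cap H^2_{z_2}\subseteq M_1$, this gives $F_1=M_1\ominus(M\cap H^2_{z_2})$, and symmetrically $F_2=M_2\ominus(M\cap H^2_{z_1})$ with $M_2:=M\ominus z_2M$. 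Because $M\cap H^2_{z_i}$ is a $z_i$-invariant subspace of $H^2_{z_i}$, Beurling's theorem produces inner functions (or $0$) $q_1,q_2$ with $M\cap H^2_{z_1}=q_1H^2_{z_1}$ and $M\cap H^2_{z_2}=q_2H^2_{z_2}$. These $q_i$ are the required ones: plainly $q_1\hh+q_2\hh\subseteq M$, and the whole task is to prove the opposite inclusion.

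For that, I would use the Wold decomposition of $M$ with respect to the pure isometry ``multiplication by $z_1$'' on $M$, namely $M=\bigoplus_{n\ge0}z_1^nM_1$. Since $M_1=q_2H^2_{z_2}\oplus F_1$ orthogonally, regrouping turns this into $M=q_2\hh\oplus\widetilde F_1$ with $\widetilde F_1:=\overline{\bigoplus_{n\ge0}z_1^nF_1}$, and symmetrically $M=q_1\hh\oplus\widetilde F_2$. Hence it is enough to show $F_1\subseteq q_1\hh$: then $\widetilde F_1\subseteq q_1\hh$ and $M=q_2\hh\oplus\widetilde F_1\subseteq q_1\hh+q_2\hh$. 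By the decomposition $M=q_1\hh\oplus\widetilde F_2$, the inclusion $F_1\subseteq q_1\hh$ is equivalent to $F_1\perp\widetilde F_2$, i.e. to $F_1\perp z_2^mF_2$ for every $m\ge0$.

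The hypothesis only delivers the case $m=0$. The key observation that lets one propagate it is the identity, valid for $f\in N$,
\[P_MT_2^*\,P_M(z_1f)=P_M(z_1T_2^*f),\]
which holds because $T_2^*P_M(z_1f)=z_1T_2^*f-T_2^*(S_1f)$ and both $T_2^*f$ and $T_2^*(S_1f)$ lie in $N$; equivalently, $F_1$ is invariant under $R_2^*=P_MT_2^*|_M$. Using this, together with the fact that $z_2^mP_M(z_2g)\in M$, one moves a factor of $z_2$ across an inner product to obtain
\[\langle P_M(z_1f),\,z_2^{m+1}P_M(z_2g)\rangle=\langle P_M(z_1T_2^*f),\,z_2^mP_M(z_2g)\rangle ;\]
since $T_2^*f\in N$, induction on $m$ (base case $F_1\perp F_2$) now gives $F_1\perp z_2^mF_2$ for all $m$, hence $F_1\perp\widetilde F_2$, and the theorem follows. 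I expect this last block to be the crux: the point is that the lone orthogonality relation $F_1\perp F_2$ coming from the hypothesis can be bootstrapped, via the invariance $R_2^*F_1\subseteq F_1$, into orthogonality against the whole $z_2$-tower $\widetilde F_2$. By contrast the equivalence $[S_1^*,S_2]=0\iff F_1\perp F_2$, the identification of the $F_i$, and the two Wold decompositions are routine once set up.
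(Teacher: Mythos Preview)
The paper is a survey and does not supply a proof of this theorem; it merely states the result and attributes it to Izuchi, Nakazi and Seto \cite{INS1}. So there is no argument in the paper to compare against.

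Your proof is correct. The ``if'' direction via the tensor identification $N=K(q_1)\otimes K(q_2)$ is immediate. For the converse, each of your steps checks out: the computation $\langle[S_1^*,S_2]f,g\rangle=-\langle P_M(z_2f),P_M(z_1g)\rangle$ is valid (using $S_i^*=T_i^*|_N$ and $T_1^*T_2=T_2T_1^*$); the identification $F_1=M_1\ominus(M\cap H^2_{z_2})$ follows exactly as you say, since for $m_1\in M_1$ one has $m_1\perp P_M(z_1N)$ iff $m_1\perp z_1N$ iff $m_1\perp z_1\hh$; the Wold decomposition $M=\bigoplus_{n\ge0}z_1^nM_1$ regroups cleanly into the orthogonal sum $q_2\hh\oplus\widetilde F_1$; and the crucial induction step is sound because $P_MT_2^*P_M(z_1f)=P_M(z_1T_2^*f)$, which gives $R_2^*F_1\subseteq F_1$ and lets you pull one power of $z_2$ across at a time. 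The edge cases $q_i=0$ are also handled by your argument, since then $M=\widetilde F_{3-i}$ and $F_i\perp\widetilde F_{3-i}$ forces $F_i=0$.
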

This theorem provides another important example of submodules, and we shall come back to it several times later. For convenience we shall call them the Izuchi-Nakazi-Seto type submodule (or INS-submodule for short).  It is worth noting that both Theorem 4.1 and 4.2 have clean generalizations to $H^2(\D^n)$ (\cite{CS, LY, Sa1, Sa2}).
A question raised in \cite{DY2} is whether INS-submodule $M$ is necessarily of rank $2$ when $q_1$ and $q_2$ are nonconstant inner functions. This problem is harder than it looks and is settled only recently.

\begin{thm}[Chattopadhyay, Dias and Sarkar \cite{CKJ}]\label{CDS}
Let $q_1$ and $q_2$ be nonconstant inner functions in $\hho$. Then the INS-submodule has rank $2$.
\end{thm}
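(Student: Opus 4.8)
The bound $\operatorname{rank}M\le 2$ is immediate, since by construction $M=[q_1(z_1),q_2(z_2)]$; the whole content is that $M$ is not cyclic. Suppose, toward a contradiction, that $M=[f]$ for some $f\in\hh$. My first move is to reduce to $f$ being \emph{outer}. Write the inner-outer factorization $f=\iota\,O$ in $\hh$ ($\iota$ inner, $O$ outer); as multiplication by $\iota$ is isometric, $[f]=\overline{\iota\,O\,A(\D^2)}=\iota\,[O]$, so $q_1(z_1),q_2(z_2)\in[f]\subseteq\iota\hh$, i.e.\ $\iota$ divides both $q_1(z_1)$ and $q_2(z_2)$ as inner functions. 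But $q_1(z_1)\hh\cap q_2(z_2)\hh=q_1(z_1)q_2(z_2)\hh$ — if $q_1(z_1)a=q_2(z_2)b$, then for a.e.\ $w\in\D$ the nonzero constant $q_2(w)$ forces $q_1(z_1)\mid b(\cdot,w)$ in $H^2_{z_1}$, and letting $w$ vary (analytic continuation) gives $q_1(z_1)\mid b$ in $\hh$ — so $q_1(z_1)$ and $q_2(z_2)$ have no nonconstant common inner divisor, $\iota$ is constant, and $f$ is outer, hence zero-free on $\D^2$.

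If $Z(q_1)$ and $Z(q_2)$ are both nonempty, equivalently $Z(M)=Z(q_1)\times Z(q_2)\neq\varnothing$ (the case in which $q_1,q_2$ each have a Blaschke factor), we are done: every $h\in M$ vanishes on $Z(M)$, contradicting zero-freeness of $f$. This case also has a self-contained proof via defect spaces: $\operatorname{rank}$ is preserved by the unitaries of $\hh$ induced by automorphisms of $\D^2$ (which send $[g]$ to $[Wg]$, composition with an automorphism being an automorphism of $A(\D^2)$), so one may assume $q_1(0)=q_2(0)=0$; a short computation with $z_2^{0}$-coefficients then shows $q_1(z_1)\perp z_1M$ and $q_1(z_1)\perp z_2\hh\supseteq z_2M$ (and symmetrically for $q_2$), so $q_1(z_1),q_2(z_2)$ both lie in $M\ominus\overline{z_1M+z_2M}$; since $z_1M+z_2M\supseteq z_1q_1\hh+z_2q_1\hh+z_1q_2\hh+z_2q_2\hh$ and $q_i\hh/(z_1q_i\hh+z_2q_i\hh)\cong\C$, this defect space is at most $2$-dimensional, hence equals $\operatorname{span}\{q_1(z_1),q_2(z_2)\}$, and $\operatorname{rank}M\ge\dim(M\ominus\overline{z_1M+z_2M})=2$ by \cite{DY2}.

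The remaining, genuinely harder, situation is that one of $q_1,q_2$, say $q_1$, is a nonconstant singular inner function, so $Z(q_1)=\varnothing$: there is no interior common zero to exploit, no automorphism to normalize, and the defect space above may be only one-dimensional. Here I would work with Rudin's representation (2.1). Since $f$ is outer, $d\sigma_f=0$, and for $\phi\in A(\D^2)$ one has $d\sigma_{f\phi}=d\sigma_\phi$, a measure determined by the singular inner factor of $\phi$. On the other hand $q_1(z_1)\in[f]$ gives $q_1(z_1)=\lim_n f\phi_n$ in $\hh$ with $\phi_n\in A(\D^2)$, whereas $-d\sigma_{q_1(z_1)}=\mu_1\otimes m\neq 0$, $\mu_1$ being the singular measure of $q_1$. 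Using semicontinuity of the least harmonic majorant under convergence in $\hh$, the singular parts $-d\sigma_{\phi_n}$ must asymptotically carry the mass $\mu_1\otimes m$; following this through, each $\phi_n$ must contain almost all of the singular inner factor $q_1(z_1)$, so that $f\phi_n=q_1(z_1)\bigl(f\psi_n+o(1)\bigr)$ with $f\psi_n\to 1$ in $\hh$, forcing $1\in[f]$ and $[f]=\hh\neq M$ — a contradiction.

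The main obstacle is making this last argument rigorous: quantifying how the singular mass of $q_1(z_1)$ can pass to a limit inside the principal submodule $[f]$ when $f$ itself carries no singular inner factor, especially in the delicate case where $\operatorname{supp}\mu_1$ is a Carleson set and hence a singular inner factor \emph{can} be absorbed into a bidisc-algebra multiplier, so that the $\phi_n$ need not literally factor through $q_1(z_1)$. At bottom this is a rigidity statement — a singular inner factor cannot be manufactured inside $[f]$ when the generator carries none — and I expect it is where the real work of \cite{CKJ} lies; the $Z(M)\neq\varnothing$ case (finite Blaschke products in particular) is, by contrast, straightforward.
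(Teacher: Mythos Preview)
The survey does not itself prove Theorem~\ref{CDS}; it merely records the result from \cite{CKJ}, so there is no in-paper proof to compare against. I will therefore assess your argument on its own terms.

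Your defect-space argument for the case where each $q_i$ has a zero is correct and self-contained: after composing with a disc automorphism in each variable (which, as you note, preserves both the INS form and the rank), one may assume $q_1(0)=q_2(0)=0$; then $q_1(z_1)$ and $q_2(z_2)$ are orthogonal, lie in $M\ominus(z_1M+z_2M)$, and the bound $\operatorname{rank}M\geq\dim\bigl(M\ominus(z_1M+z_2M)\bigr)$ from \cite{DY2} finishes that case.

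There are, however, two genuine gaps in the general argument. First, the opening move ``write the inner--outer factorization $f=\iota\,O$ in $\hh$'' is not available: inner--outer factorization \emph{fails} in $H^2(\mathbb D^2)$ (cf.\ \cite{Ru1}, and indeed Rudin's Example~2.3 in this survey exhibits a cyclic vector with no nontrivial inner divisor that is nonetheless not outer). What your divisor argument actually proves is only that $f$ has no nontrivial inner divisor, and in two variables this does \emph{not} force $f$ to be outer, nor zero-free, nor to satisfy $d\sigma_f=0$. Consequently both your first proof of the Blaschke case (``contradicting zero-freeness of $f$'') and the starting hypothesis $d\sigma_f=0$ of your singular-case sketch are unjustified.

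Second, even granting $d\sigma_f=0$, the singular-case outline is, as you candidly say, not a proof: the semicontinuity heuristic for passing singular mass through an $\hh$-limit does not by itself produce the factorization $f\phi_n=q_1(z_1)(f\psi_n+o(1))$, and your own remark about Carleson support sets shows exactly why the $\phi_n$ need not carry $q_1(z_1)$ as an inner factor. This is the heart of the theorem, and the argument in \cite{CKJ} proceeds quite differently, exploiting the doubly-commuting structure $[R_1^*,R_2]=0$ (equivalently the tensor/Wold picture $M=q_1H^2_{z_1}\otimes H^2_{z_2}+H^2_{z_1}\otimes q_2H^2_{z_2}$) rather than function-theoretic factorization. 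That operator-theoretic route treats Blaschke and singular inner $q_i$ uniformly and avoids the polydisc inner--outer obstruction altogether.
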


An interesting but only partially solved problem about this submodule is raised in \cite{INS1}.

\vspace{3mm}

\noindent {\bf Problem 2}. Is $\big(q_1 \hh + q_2 \hh\big)\cap H^{\infty}(\D^2)=q_1H^{\infty}(\D^2)+q_2H^{\infty}(\D^2)$?

\begin{thm}[Nakazi and Seto \cite{NS}]
If either $q_1$ or $q_2$ is a finite Blaschke product then the answer to Problem 2 is positive.
\end{thm}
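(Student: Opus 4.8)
The plan is to establish the nontrivial inclusion $\big(q_1\hh + q_2\hh\big)\cap H^{\infty}(\D^2) \subseteq q_1 H^{\infty}(\D^2) + q_2 H^{\infty}(\D^2)$ (the reverse being immediate), assuming without loss of generality that $q_1$ is a finite Blaschke product, say of degree $n$. The organizing idea is to \emph{project, divide, and project once more}. Put $K(q_1) = H^2_{z_1}\ominus q_1 H^2_{z_1}$, so that $q_1\hh$ is a submodule with $\hh\ominus q_1\hh = K(q_1)\otimes H^2_{z_2}$, and let $P := P_{K(q_1)}\otimes I$ be the orthogonal projection of $\hh$ onto this complement. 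The first step is to observe that, because $q_1$ is a \emph{finite} Blaschke product, $K(q_1)$ is finite dimensional and consists of rational functions with poles off $\overline{\D}$, hence $K(q_1)\subset H^{\infty}(\D)$; writing $P$ fiberwise as the finite sum $Pf(z_1,z_2) = \sum_j \langle f(\cdot,z_2), e_j\rangle\, e_j(z_1)$ against an orthonormal basis $\{e_j\}$ of $K(q_1)$ and using $\|f(\cdot,z_2)\|_{2}\le\|f\|_{\infty}$ for each fixed $z_2$, one gets the pointwise bound $|Pf(z_1,z_2)|\le\big(\sum_j\sup_{\D}|e_j|\big)\|f\|_{\infty}$, so $P$ maps $H^{\infty}(\D^2)$ boundedly into itself. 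This is exactly where the finite Blaschke hypothesis enters, and why a general inner $q_1$ is out of reach for this method.

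Next, take $f\in\big(q_1\hh+q_2\hh\big)\cap H^{\infty}(\D^2)$ and write $f = q_1 g_1 + q_2 g_2$ with $g_1,g_2\in\hh$. Applying $P$: since $q_1 g_1\in q_1\hh=\ker P$, and since $P$ commutes with multiplication by the $z_2$-function $q_2$, this yields $Pf = q_2\,(Pg_2)$. By Step~1, $Pf\in H^{\infty}(\D^2)$; moreover $q_2$ is inner, so $|Pg_2|=|Pf|$ a.e.\ on $\T^2$ and hence the boundary function of $Pg_2$ lies in $L^{\infty}(\T^2)$. Now invoke the elementary fact that an $\hh$-function with essentially bounded boundary values is in $H^{\infty}(\D^2)$: since any $H^2$ function has $d\sigma\le 0$, formula (2.1) shows that the least harmonic majorant of $\log|Pg_2|$ is dominated by $\log$ of the $L^{\infty}$-norm of its boundary function, forcing $|Pg_2|\le\|(Pg_2)^*\|_{\infty}$ throughout $\D^2$. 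Thus $h_2:=Pg_2\in H^{\infty}(\D^2)$.

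Finally, consider $f - q_2 h_2 = f - Pf = (I-P)f$. It is bounded, being a difference of bounded functions, and it lies in the range of $I-P$, which is precisely $q_1\hh$. Writing $(I-P)f = q_1 h_1$ with $h_1\in\hh$ and repeating the division argument — $|h_1| = |(I-P)f|$ a.e.\ on $\T^2$ because $q_1$ is inner, together with (2.1) — gives $h_1\in H^{\infty}(\D^2)$. Hence $f = q_1 h_1 + q_2 h_2$ with both coefficients in $H^{\infty}(\D^2)$, which is what we wanted.

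I expect the only steps needing genuine care to be: (i) the boundedness of $P$ on $H^{\infty}(\D^2)$, which is the heart of the matter and the one place finiteness of the Blaschke product is used; and (ii) the two-variable statement that a Smirnov-class (in particular $H^2$) function with bounded boundary values is in $H^{\infty}$, which is standard but should be cited carefully. A useful sanity check is the model case $q_1 = z_1^n$: there $P$ is just truncation of the $z_1$-Taylor expansion below degree $n$, the identity $Pf = q_2\,Pg_2$ becomes $f_k = q_2\,(g_2)_k$ for the $z_1$-Taylor coefficients $f_k(z_2)$ of $f$, and the general finite Blaschke case is the same proof with this truncation replaced by $P_{K(q_1)}\otimes I$.
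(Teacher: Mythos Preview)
The survey does not supply a proof of this theorem; it merely cites the result of Nakazi and Seto \cite{NS}. So there is no argument in the paper to compare yours against.

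That said, your proof is correct and efficiently organized. The three points that carry the weight---(i) that $P=P_{K(q_1)}\otimes I$ preserves $H^{\infty}(\D^2)$ because $K(q_1)$ is finite dimensional with a basis in $H^{\infty}(\D)$, (ii) that $P$ commutes with multiplication by the $z_2$-function $q_2$, and (iii) that an $\hh$-function with essentially bounded boundary values lies in $H^{\infty}(\D^2)$ via (2.1) together with $d\sigma_f\le 0$---are all handled correctly, and the fiberwise formula $Pf(z_1,z_2)=\sum_j\langle f(\cdot,z_2),e_j\rangle\,e_j(z_1)$ is justified by the absolute convergence of the $z_2$-power series of $f$ in $H^2_{z_1}$. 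One small point worth making explicit: when you write $f=q_1g_1+q_2g_2$ with $g_i\in\hh$, you are tacitly using that $q_1\hh+q_2\hh$ is already closed (not merely dense in the INS-submodule). This is true, since $q_1\hh+q_2\hh=q_1\hh\oplus\big(K(q_1)\otimes q_2H^2_{z_2}\big)$ is an orthogonal sum, but it deserves a sentence.
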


INS-submodule seems to appear first in \cite{Ja} where a special case of Theorem \ref{CDS} was proved. Submodules that have structure similar to INS or inner-sequenced-based submodules are constructed in \cite{KS} through the so-called generalized inner functions, i.e., functions $f\in H^\infty(\D^2)$ such that $|f|\geq c>0$ a.e. on $\mathbb T^2$ for some constant $c$. It is observed in \cite{Ko} that $f\hh$ is a closed subspace in $\hh$ (and hence is a submodule) if and only if $f$ is a generalized inner function.

\subsection{Hilbert-Schmidtness}

For general operators $A_1$ and $A_2$ we call $[A_1^*, A_2]$ their cross commutator. Results in the last subsection make one wonder if the cross commutators $[R^*_1, R_2]$ and $[S^*_1, S_2]$ are of finite rank or compact for more general submodules. In \cite{DYa2}, it was shown that if $J$ is an ideal in ${\mathcal R}$ with the zero set $Z(J)$ of codimension $\geq 2$ then both $S_1$ and $S_2$ are essentially normal, which implies that $[S^*_1, S_2]$ is compact. Then in \cite{CMY} it was shown that $[R^*_1, R_2]$ is Hilbert-Schmidt on $[J]$ for every homogeneous ideal $J$. Using Proposition 3.3 and Lemma 3.4, stronger results were obtained in \cite{Ya1} where it showed that $[R^*_1, R_2]$ and $[R^*_1, R_1][R^*_2, R_2]$ are both Hilbert-Schmidt on $[J]$ for every ideal $J$. A further generalization which went beyond polynomials was made in \cite{Ya4}.

\begin{thm}
Let $M$ be a submodule such that $\sigma_e(S_1)\cap\sigma_e(S_2)\neq \overline{\D}$. Then 

\noindent (a) $[R^*_1, R_2]$ is Hilbert-Schmidt,

\noindent (b) $[R^*_2, R_2][R^*_1, R_1]$ is Hilbert-Schmidt,

\noindent (c) $[S^*_1, S_2]$ is Hilbert-Schmidt.
\end{thm}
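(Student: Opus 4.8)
The plan is to read everything off from the $2\times2$ block‑triangular forms of $T_{z_1},T_{z_2}$ with respect to $\hh=M\oplus N$ together with the double commutativity of the pair $(T_{z_1},T_{z_2})$. Writing $T_{z_i}$ in such form, with diagonal entries $R_i$ on $M$ and $S_i$ on $N$ and off‑diagonal entry $Y_i:=P_MM_{z_i}|_N\colon N\to M$, and comparing the entries of $T_{z_1}^*T_{z_2}=T_{z_2}T_{z_1}^*$, one gets the identities $[R_1^*,R_2]=Y_2Y_1^*$, $[S_1^*,S_2]=-Y_1^*Y_2$, and $[R_i^*,R_i]=I_M-R_iR_i^*=P_{M_i}$, the orthogonal projection of $M$ onto the wandering space $M_i:=M\ominus z_iM$; thus (b) is just the assertion that $P_{M_2}P_{M_1}$ is Hilbert--Schmidt. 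I would also record that $Y_i^*=D_iP_{M_i}$, where $D_i:=T_{z_i}^*|_{M_i}\colon M_i\to N$ is the restriction of the backward shift (so $D_1=D$), whence $Y_i^*Y_i=D_iD_i^*=I_N-S_i^*S_i$ by Proposition 3.2(a).

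These identities collapse all three parts to a single estimate. Since $R_1^*|_{M_1}=0$, the operator $[R_1^*,R_2]$ vanishes on $M\ominus M_1$ and equals $R_1^*R_2$ on $M_1$; combining this with cyclicity of the trace, the relation $Y_i^*Y_i=I_N-S_i^*S_i$, and the Wold decomposition $M=\bigoplus_{k\ge0}z_1^kM_1$ yields the chain $\|[R_1^*,R_2]\|_{HS}^2=\operatorname{tr}_N\!\big((I-S_1^*S_1)(I-S_2^*S_2)\big)=\operatorname{tr}(P_{M_2}P_{M_1})=\|R_1^*R_2|_{M_1}\|_{HS}^2=\|P_{M_2}P_{M_1}\|_{HS}^2$, so (a) and (b) have exactly the same Hilbert--Schmidt norm. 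For (c), $[S_1^*,S_2]=-D_1\big(P_{M_1}|_{M_2}\big)D_2^*$ with $\|D_i\|\le1$, hence $\|[S_1^*,S_2]\|_{HS}\le\|P_{M_1}|_{M_2}\|_{HS}=\|[R_1^*,R_2]\|_{HS}$. Therefore it suffices to prove the single fact that $R_1^*R_2|_{M_1}\colon M_1\to M_2$ is Hilbert--Schmidt — equivalently, that $\operatorname{tr}_N\!\big((I-S_1^*S_1)(I-S_2^*S_2)\big)<\infty$.

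For that step I would bring in the hypothesis. The sets $\sigma_e(S_i)$ are closed subsets of $\overline{\D}$ and their intersection is proper, so its complement in $\overline{\D}$ is open and nonempty, hence meets $\D$; using the symmetry between $z_1$ and $z_2$ I may assume there is $\lambda_0\in\D$ with $\lambda_0\notin\sigma_e(S_1)$, so that $S_1-\lambda_0$ is Fredholm and, by Proposition 3.3(b), so is $L_{M_1}(\lambda_0)=\Theta(\lambda_0)\colon M_1\to\hho$. Precomposing with an automorphism of $\D$ in the variable $z_1$ — a unitary of $\hh$ that carries $M$ to a submodule whose left inner function is, up to a nonvanishing scalar, $\Theta$ reparametrized, and which preserves the Hilbert--Schmidt class of the three commutators by a routine conjugation argument — I may take $\lambda_0=0$, so that $L_{M_1}(0)$ is Fredholm and hence bounded below off a finite‑dimensional subspace of $M_1$. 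On the other hand Lemma 3.4 supplies that each evaluation $R_{M_1}(\lambda)\colon M_1\to\hho$ is Hilbert--Schmidt. The conclusion then follows by the method of \cite{Ya1}: decomposing functions in $M_1$ into their $z_2$‑slices, the action of $R_1^*R_2|_{M_1}$ is fed by the Hilbert--Schmidt family $\{R_{M_1}(\lambda)\}$ in the $z_2$‑variable, while the coercivity of $L_{M_1}(0)$ controls the $z_1$‑part that the backward shift $T_{z_1}^*$ does not see.

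The heart of the matter is this last estimate. The point is that neither defect operator $I-S_i^*S_i=D_iD_i^*$ need be compact, let alone trace class — already for $M=[z_1-z_2]$, where $S_1$ is the Bergman shift, $I-S_1^*S_1$ is compact but not trace class — so the Hilbert--Schmidtness of $[R_1^*,R_2]$ (and hence of $P_{M_2}P_{M_1}$ and of $[S_1^*,S_2]$) cannot be obtained from either variable in isolation. It is a genuine two‑variable cancellation, pairing an invertibility coming from one variable (Fredholmness of $L_{M_1}$, through the spectral hypothesis and Proposition 3.3) against a Hilbert--Schmidt family of slices coming from the other (Lemma 3.4). Keeping track of the finite‑rank corrections produced by Fredholmness and showing that the slice estimates assemble into a finite bound is where the real work lies.
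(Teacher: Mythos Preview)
Your block identities are correct, and the inequality $\|[S_1^*,S_2]\|_{HS}\le\|P_{M_1}|_{M_2}\|_{HS}$ does legitimately reduce (c) to (b). But the displayed chain of equalities is wrong: in the paper's notation $\|[R_1^*,R_2]\|_{HS}^2=\Sigma_1(M)$ while $\|P_{M_2}P_{M_1}\|_{HS}^2=\operatorname{tr}(P_{M_2}P_{M_1})=\Sigma_0(M)$, and Theorem~4.6(a) states $\Sigma_0(M)-\Sigma_1(M)=1$, not $0$. So (a) and (b) do \emph{not} have ``exactly the same Hilbert--Schmidt norm,'' and establishing only that $R_1^*R_2|_{M_1}$ is Hilbert--Schmidt (i.e.\ $\Sigma_1<\infty$) does not by itself deliver (b). The Wold decomposition does not rescue this step. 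The paper makes no such reduction; it proves (b) directly and says (a) and (c) ``follow similarly with technical modifications.''

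For the key estimate your instinct---combine Proposition~3.3 with Lemma~3.4---matches the paper, but the execution is off. The paper's argument is a one-line identity you did not write down: since $R(0)R_2=0$, one has
\[
R(0)\,[R_2^*,R_2][R_1^*,R_1]=R(0)\,[R_1^*,R_1]=R_{M_1}(0),
\]
which is Hilbert--Schmidt by Lemma~3.4; and assuming $S_2$ is Fredholm, the evaluation $R(0)|_{M_2}=R_{M_2}(0)$ is Fredholm by the $z_2$-version of Proposition~3.3, whence $P_{M_2}P_{M_1}$ is Hilbert--Schmidt. Note the pairing: Fredholmness of the evaluation on $M_2$, Hilbert--Schmidtness of the evaluation on $M_1$. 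Your final paragraph instead places both ingredients---$L_{M_1}(0)$ Fredholm and $R_{M_1}(\lambda)$ Hilbert--Schmidt---on the \emph{same} wandering space $M_1$ and then defers to ``the method of \cite{Ya1}''; there is no evident way to combine two pieces of data living on $M_1$ into control of $P_{M_2}P_{M_1}$, and no M\"obius reduction is needed once the identity above is used.
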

\begin{proof}
Here we just give a proof to (b) with the assumption that $S_2$ is Fredholm. The other cases follow similarly with technical modifications. First note that $[R^*_i, R_i]=I-R_iR^*_i$ is the orthogonal projection from $M$ onto $M_i=M\ominus z_iM, i=1, 2.$ Check that 
\begin{align*}
R(0)\left([R^*_2, R_2][R^*_1, R_1]\right)=R(0)[R^*_1, R_1]-R(0)R_2R^*_2[R^*_1, R_1]=R(0)[R^*_1, R_1].\end{align*}
By Lemma 3.4, the right-hand side is Hilbert-Schmidt. Since $S_2$ is Fredholm, the evaluation $R(0)$ on $M\ominus z_2M$ is Fredholm by Proposition 3.3. Hence the Hilbert-Schmidtness of $R(0)\left([R^*_2, R_2][R^*_1, R_1]\right)$ implies that $[R^*_2, R_2][R^*_1, R_1]$ is Hilbert-Schmidt.
\end{proof}
A submodule for which $[R^*_1, R_2]$ and $[R^*_2, R_2][R^*_1, R_1]$ are both Hilbert-Schmidt is called a Hilbert-Schmidt submodule. Except for the submodules considered in Corollary 3.5, it seems all submodules we have encountered so far satisfy the condition in Theorem 4.5. and hence are Hilbert-Schmidt. For Rudin's Example 2.2 this fact was proved in \cite{Ya4}. For Rudin's Example 2.3 this was proved in \cite{QY2} with a condition. For the so-called splitting submodules, the fact is proved in \cite{III2}. 

For simplicity, we set \[\Sigma_0(M)=\|[R^*_2, R_2][R^*_1, R_1]\|^2_{HS},\ \ \ \Sigma_1(M)=\|[R^*_1, R_2]\|^2_{HS}.\]
It is not hard to see that $\Sigma_0(M)$ and $\Sigma_1(M)$ are invariants with respect to the unitary equivalence of submodules. If $\{\phi_n\mid n\geq 0\}$ and $\{\psi_m\mid m\geq 0\}$ are orthonormal basis for $M_1$ and $M_2$, respectively, then 
\[\Sigma_0(M)=\sum_{m,n=0}^{\infty} |\langle \phi_n,\ \psi_m\rangle|^2,\ \ \ \Sigma_1(M)=\sum_{m,n=0}^{\infty} |\langle z_2\phi_n,\ z_1\psi_m\rangle|^2.\]
The following numerical relation is shown in \cite{Ya4, Ya8}.
\begin{thm}
Let $M$ be a Hilbert-Schmidt submodule. Then 

\noindent (a) $\Sigma_0(M)-\Sigma_1(M)=1$,

\noindent (b) $[S^*_1, S_2]$ is Hilbert-Schmidt with
$\|[S^*_1, S_2]\|^2_{HS}+\|P_N1\|^2\leq \Sigma_0(M).$
\end{thm}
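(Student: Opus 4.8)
The plan is to use the $2\times 2$ block structure of $T_1,T_2$ relative to $\hh=M\oplus N$. Since $M$ is invariant for each $T_i=T_{z_i}$, each $T_i$ is block upper triangular, with diagonal blocks $R_i$ (on $M$) and $S_i$ (on $N$) and upper-right block $C_i:=P_MT_i|_N\colon N\to M$. A few elementary computations supply the working tools: each $R_i$ is an isometry and $[R_i^*,R_i]=I-R_iR_i^*$ is the projection $P_{M_i}$ of $M$ onto $M_i$; $\operatorname{ran}C_i\subseteq M_i$ and $0\le C_iC_i^*\le P_{M_i}$; and $C_i^*C_i=I-S_i^*S_i$ on $N$. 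Comparing the four entries of the identity $[T_1^*,T_2]=0$ (the $T_i$ doubly commute on $\hh$) yields in particular
\[ [R_1^*,R_2]=C_2C_1^*,\qquad [S_1^*,S_2]=-\,C_1^*C_2, \]
so that $\Sigma_1(M)=\|C_2C_1^*\|_{HS}^2$ and $\|[S_1^*,S_2]\|_{HS}^2=\|C_1^*C_2\|_{HS}^2$. The remaining ingredient is intrinsic to $\hh$: $I-T_iT_i^*$ is the projection onto the functions free of $z_i$, and those two subspaces intersect exactly in the constants, so $(I-T_1T_1^*)(I-T_2T_2^*)$ is the rank-one projection onto $\C$, of trace $1$ (and $\|P_M1\|^2+\|P_N1\|^2=1$).

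For (a), I would record $\Sigma_0(M)=\operatorname{tr}(P_{M_2}P_{M_1}P_{M_2})$ and, from the short identity $[R_1^*,R_2]^*[R_1^*,R_2]=P_{M_1}-R_2^*P_{M_1}R_2$ (immediate from $R_i^*R_i=I$, $R_1R_2=R_2R_1$), $\Sigma_1(M)=\operatorname{tr}(P_{M_1}-R_2^*P_{M_1}R_2)$. The Hilbert-Schmidt hypothesis makes these operators trace class, so $\Sigma_0(M)-\Sigma_1(M)$ is the trace of a single explicit trace-class operator on $M$. I would then transport the computation into $\hh$ using $P_MT_iT_i^*P_M=R_iR_i^*+C_iC_i^*$ and the block identities above, and show that after regrouping it equals $\operatorname{tr}\big((I-T_1T_1^*)(I-T_2T_2^*)\big)=1$. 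The cancellations one exploits are precisely those that make the formal computation $\operatorname{tr}\big(I-T_1T_1^*-T_2T_2^*+T_1T_1^*T_2T_2^*\big)$ collapse to the trace of the rank-one projection onto $\C$.

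For (b), from $[S_1^*,S_2]=-C_1^*C_2$ and $0\le C_1C_1^*\le P_{M_1}$,
\[ \|[S_1^*,S_2]\|_{HS}^2=\operatorname{tr}\big(C_2^*C_1C_1^*C_2\big)\le\operatorname{tr}\big(C_2^*P_{M_1}C_2\big)=\|P_{M_1}C_2\|_{HS}^2. \]
Evaluating the last norm on an orthonormal basis $\{\phi_n\}$ of $M_1$, with $C_2^*\phi_n=P_NT_2^*\phi_n$ and $I-T_2T_2^*=P_{H^2_{z_1}}$, one finds $\|C_2^*\phi_n\|^2=\|P_{M_2}\phi_n\|^2-\|P_{H^2_{z_1}}\phi_n\|^2$, hence $\|P_{M_1}C_2\|_{HS}^2=\Sigma_0(M)-\|R_{M_1}(0)\|_{HS}^2$, where $R_{M_1}(0)h=h(z_1,0)$ is the boundary evaluation of Lemma~3.4 (finite by that lemma). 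It then remains to bound $\|R_{M_1}(0)\|_{HS}^2$ from below by the constant-function term: since $P_M1\in M_1$, the constant already contributes $\|P_{M_1}1\|^2=\|P_M1\|^2$ to $\|R_{M_1}(0)\|_{HS}^2$ through its $z_1^0$-coefficient, and the remaining coefficients, together with the positive part $P_{M_1}-C_1C_1^*$ discarded above, should account for the rest. Collecting terms gives $\|[S_1^*,S_2]\|_{HS}^2+\|P_N1\|^2\le\Sigma_0(M)$.

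The main obstacle, in (a), is the trace bookkeeping: none of $P_{M_1}$, $R_2^*P_{M_1}R_2$, $T_iT_i^*$ is trace class by itself, and the entire content is that their prescribed combination telescopes to the trace-one rank-one projection onto the constants; one must keep every intermediate quantity inside the trace-class ideal so that the cancellations are genuine, not merely formal. In (b) the difficulty is quantitative and lies exactly at the final step: one has to account for the constant function precisely enough to keep the full term $\|P_N1\|^2$ on the correct side, which means not discarding too much in the estimates $C_1C_1^*\le P_{M_1}$ and $\|R_{M_1}(0)\|_{HS}^2\ge\|P_M1\|^2$.
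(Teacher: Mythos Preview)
Your block decomposition and the identities $[R_1^*,R_2]=C_2C_1^*$, $[S_1^*,S_2]=-C_1^*C_2$, $\operatorname{ran}C_i\subseteq M_i$, $C_i^*C_i=I-S_i^*S_i$ are all correct and a clean starting point. But both parts stall at exactly the places you flag.

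\textbf{Part (a).} The paper's argument is not a telescoping computation. It passes to the \emph{fringe operator} $F=P_{M_1}R_2|_{M_1}$ on $M_1=M\ominus z_1M$ and uses Proposition~8.4 to identify $\Sigma_0(M)=\operatorname{tr}(I-FF^*)$ and $\Sigma_1(M)=\operatorname{tr}(I-F^*F)$, both trace class on $M_1$ under the Hilbert--Schmidt hypothesis. Then Calder\'on's trace formula gives $\operatorname{tr}(F^*F-FF^*)=-\operatorname{ind}F$, and $\operatorname{ind}F=-\operatorname{ind}(R_1,R_2)=-1$ by Corollary~8.3 and Theorem~6.1. The key point you are missing is that the trace of a commutator need not vanish: its value here is an \emph{index}, not the residue of a cancellation. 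If you carry your own computation through honestly you arrive at
\[
\Sigma_0-\Sigma_1=\operatorname{tr}_M\bigl(R_2^*P_{M_1}R_2-P_{M_1}R_2R_2^*P_{M_1}\bigr),
\]
and the only way to evaluate this without extra hypotheses is to observe that both operators are supported on $M_1$, recognize the difference there as $F^*F-FF^*$, and invoke the index formula. Your proposed route---transport into $\hh$ and regroup to $(I-T_1T_1^*)(I-T_2T_2^*)$---would require splitting quantities such as $\operatorname{tr}(P_{M_1}-R_2^*P_{M_1}R_2)$ into $\operatorname{tr}(P_{M_1})-\operatorname{tr}(R_2^*P_{M_1}R_2)$, where neither term is trace class; that is exactly the illegitimate move, and the nonzero answer shows the rearrangement cannot be naive.

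\textbf{Part (b).} Your chain gives $\|[S_1^*,S_2]\|_{HS}^2\le\Sigma_0(M)-\|R_{M_1}(0)\|_{HS}^2$, and then $\|R_{M_1}(0)\|_{HS}^2\ge\|P_{M_1}1\|^2=\|P_M1\|^2$. That yields
\[
\|[S_1^*,S_2]\|_{HS}^2+\|P_M1\|^2\le\Sigma_0(M),
\]
which has $\|P_M1\|^2$, not $\|P_N1\|^2$, on the left. Take $M=z_2\hh$: then $\|P_M1\|^2=0$, $\|P_N1\|^2=1$, $\Sigma_0(M)=1$, and $[S_1^*,S_2]=0$, so the stated inequality (b) is $0+1\le 1$ while yours is $0+0\le 1$. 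In this example $\|R_{M_1}(0)\|_{HS}^2=0$ and the entire content sits in the discarded term $\operatorname{tr}\bigl(C_2^*(P_{M_1}-C_1C_1^*)C_2\bigr)=1$. Saying that this term and the remaining coefficients ``should account for the rest'' is not an argument; you have given no reason why
\[
\|R_{M_1}(0)\|_{HS}^2+\operatorname{tr}\bigl(C_2^*(P_{M_1}-C_1C_1^*)C_2\bigr)\ge\|P_N1\|^2
\]
in general, and that is the whole inequality.
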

In fact, a sequence of numerical invariants can be defined as follows:
\[\Sigma_k(M)=\sum_{m,n=0}^{\infty} |\langle z^k_2\phi_n,\ z^k_1\psi_m\rangle|^2,\ \ \ k\geq 0.\]
It is an interesting exercise to compute this sequence for the INS-submodule. In view of Theorem 4.1 the following problem seems puzzling.

\vspace{3mm}

\noindent {\bf Problem 3.} For what submodule $M$ is $\Sigma_2(M)=0$?


\vspace{3mm}

The most difficult problem along this line, in the author's view, is the following conjecture (\cite{Ya1}).

\vspace{3mm}

\noindent {\bf Conjecture 4.} If a submodule has finite rank then it is Hilbert-Schmidt.

\vspace{3mm}

Recently, the following preliminary result is obtained.
\begin{thm}[Luo, Izuchi and Yang \cite{LIY}]
If a submodule $M$ contains the function $z_1-z_2$ then it is Hilbert-Schmidt if and only if it is finitely generated.
\end{thm}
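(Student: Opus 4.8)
The plan is to prove both directions by reducing to the quotient module $N=\hh\ominus M$ and exploiting the fact that $M\supset [z_1-z_2]$ gives a concrete, already-understood comparison model. For the easy direction, suppose $M$ is finitely generated. Since $z_1-z_2\in M$, the defect space of $M$ cannot be trivial, and more importantly $M$ sits between $[z_1-z_2]$ and $\hh$. I would first show that when $M$ is finitely generated, $\sigma_e(S_1)\cap\sigma_e(S_2)\neq\overline{\D}$, so that Theorem 4.5 applies and delivers Hilbert-Schmidtness of $[R_1^*,R_2]$ and $[R_2^*,R_2][R_1^*,R_1]$ directly. The key point is that $M\supset[z_1-z_2]$ forces $N\subset N_0:=\hh\ominus[z_1-z_2]$, and on $N_0$ the operator $S_1$ is the Bergman shift (Example 3.7), whose essential spectrum is the unit circle $\T$, not $\overline{\D}$; one then has to check that passing to the smaller quotient $N$ (a co-invariant subspace situation for the pair) keeps part of $\overline{\D}$ out of $\sigma_e(S_1)\cap\sigma_e(S_2)$. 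Equivalently, by Corollary 3.5 one needs $\dim\big(M\ominus(z_1M+z_2M)\big)<\infty$, which follows from finite rank since the defect space dimension is at most $\operatorname{rank}M$ (cited after Corollary 3.5).

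For the harder direction, assume $M$ is Hilbert-Schmidt; I want to conclude $M$ is finitely generated. Here the strategy is to use the numerical invariant $\Sigma_0(M)=\|[R_2^*,R_2][R_1^*,R_1]\|_{HS}^2$, which is finite by hypothesis, together with the relation $\Sigma_0(M)-\Sigma_1(M)=1$ from Theorem 4.6(a). The idea is that $\Sigma_0(M)=\sum_{m,n}|\langle\phi_n,\psi_m\rangle|^2$ where $\{\phi_n\}$, $\{\psi_m\}$ are orthonormal bases of $M_1=M\ominus z_1M$ and $M_2=M\ominus z_2M$. Because $z_1-z_2\in M$, one gets a distinguished vector in both $M_1$ and $M_2$ (namely the normalized projection of $z_1-z_2$, which up to scaling equals $z_1-z_2$ since $z_1(z_1-z_2),z_2(z_1-z_2)$ are both orthogonal to it appropriately), and this pins down the geometry of $M_1$ relative to $M_2$. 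I would show that finiteness of $\Sigma_0(M)$ forces $M_1\cap M_2=M\ominus(z_1M+z_2M)$ to be finite-dimensional — indeed if the defect space were infinite-dimensional, Corollary 3.5 gives $\sigma_e(S_1)=\overline{\D}$, and one argues (via Lemma 3.4 and Proposition 3.3, as in the proof of Corollary 3.5) that $\Sigma_0(M)=\infty$, a contradiction. Once $\dim\big(M\ominus(z_1M+z_2M)\big)<\infty$, the remaining task is to upgrade finite defect dimension to finite rank. For submodules containing $z_1-z_2$ this should be tractable: the quotient $\hh/M$ is a quotient of $\hh/[z_1-z_2]\cong L^2_a(\D)$, and on the Bergman space the Aleman-Richter-Sundberg theorem (cited after Example 3.7) says every invariant subspace is generated by its wandering subspace, whose dimension controls everything; transporting this through the unitary $N_0\cong L^2_a(\D)$ and the module map should show $\operatorname{rank}M$ is bounded by a finite quantity built from $\dim(M\ominus(z_1M+z_2M))$.

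The main obstacle I anticipate is the second step of the hard direction: passing from "finite defect space" to "finite rank." Finite defect dimension is genuinely weaker than finite rank in $\hh$ in general (Rudin's Example 2.2 has defect dimension $2$ but infinite rank), so the argument must use $z_1-z_2\in M$ in an essential way — precisely to rule out the Rudin-type pathology. Concretely, I expect one must show that the image of $M$ under the quotient map $\hh\to\hh/[z_1-z_2]\cong L^2_a(\D)$, call it $\mathcal M$, satisfies $M=[z_1-z_2]+ (\text{lift of }\mathcal M)$ in a way that makes $\operatorname{rank}M\le 1+\dim(\mathcal M\ominus w\mathcal M)$, and then identify $\dim(\mathcal M\ominus w\mathcal M)$ with (a piece of) $\dim(M\ominus(z_1M+z_2M))$. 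Getting this identification exactly right — tracking how the two coordinate actions on $M$ collapse to the single Bergman shift on the quotient, and checking no generators are lost — is the delicate part, and is presumably where the bulk of the cited paper's work lies. The easy direction and the $\Sigma_0$-finiteness dichotomy should be comparatively routine given Theorem 4.5, Corollary 3.5, and Theorem 4.6.
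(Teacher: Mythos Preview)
The survey does not actually prove this theorem; it is only stated with a citation to \cite{LIY}, so there is no proof in the paper to compare your proposal against. I can therefore only assess the proposal on its own terms.

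There is a genuine gap in your ``easy'' direction. You want to invoke Theorem~4.5 by establishing $\sigma_e(S_1)\cap\sigma_e(S_2)\neq\overline{\D}$, and you write ``Equivalently, by Corollary~3.5 one needs $\dim\big(M\ominus(z_1M+z_2M)\big)<\infty$.'' But Corollary~3.5 is a one-way implication: infinite defect dimension forces $\sigma_e(S_i)=\overline{\D}$, not conversely. Finite defect dimension (which does follow from finite rank) does \emph{not} by itself yield the essential-spectrum hypothesis of Theorem~4.5. Your Bergman-shift idea---that $S_1$ on $N_0=\hh\ominus[z_1-z_2]$ has $\sigma_e=\T$ and that $N$ is a compression---is in the right spirit, but a compression to a co-invariant subspace need not inherit essential-spectrum bounds, so this step is unsubstantiated. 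The forward direction therefore remains open in your sketch and likely requires a direct computation specific to submodules containing $z_1-z_2$, not a black-box appeal to Theorem~4.5.

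In the ``hard'' direction, your Step~1 is correct but over-engineered: since $[R_2^*,R_2][R_1^*,R_1]$ acts as the identity on $M_1\cap M_2=M\ominus(z_1M+z_2M)$, Hilbert--Schmidtness of that product immediately forces the defect space to be finite-dimensional, with no detour through Corollary~3.5 or $\Sigma_0$. Your Step~2 is, as you say, the real content. The Bergman-space lifting idea (push $M$ down to $L^2_a(\D)$ via $\hh\to\hh/[z_1-z_2]$ and invoke Aleman--Richter--Sundberg) is plausible in outline, but two things need genuine proof: that the wandering-subspace dimension of the image $\mathcal M$ matches a piece of $\dim\big(M\ominus(z_1M+z_2M)\big)$, and that a generating set for $\mathcal M$ lifts to one for $M$ over $[z_1-z_2]$. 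You correctly flag Rudin's Example~2.2 as the cautionary tale showing this cannot be generic and must use $z_1-z_2\in M$ essentially.
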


\noindent {\bf Problem 5.} Does Theorem 4.7 hold if  $z_1-z_2$ is replaced by other polynomials?

\vspace{3mm}

Many studies were made about the ranks of $[R^*_1, R_2]$ and $[S^*_1, S_2]$ (cf. \cite{III1, II2, II3, II4}). Here we just mention two results.

\begin{thm}[K. J. Izuchi and K. H. Izuchi \cite{II3}]
Let $M$ be a Hilbert-Schmidt submodule. Then
$\operatorname{rank} [R^*_1, R_2]-1\leq \operatorname{rank} [S^*_1, S_2]\leq \operatorname{rank} [R^*_1, R_2]+1.$
\end{thm}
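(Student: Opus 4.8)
The plan is to exploit the two defect-operator identities in Proposition 3.3 together with the Hilbert–Schmidt information coming from Lemma 3.4, and to compare the rank of the cross commutator on $M$ with that of its compression to $N$. Write $P_i$ for the orthogonal projection of $M$ onto $M_i=M\ominus z_iM$; as noted in the proof of Theorem 4.5, $[R_i^*,R_i]=P_i$. The first step is to record the algebraic relation between $[S_1^*,S_2]$ and the data on $M$. Since $P_N$ intertwines the Toeplitz pair with $(S_1,S_2)$, a direct computation expresses $[S_1^*,S_2]$ in terms of $P_N z_1 P_M z_2 - P_N z_2 P_M z_1$ restricted to $N$; using $P_M = I - P_N$ one rewrites this as $(S_1^*S_2 - S_2 S_1^*)$ minus a term built from $P_N z_1 P_1$ and $P_N z_2 P_2$ (the pieces of $[R_1^*,R_2]$ that "leak" through the defect spaces). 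The upshot, already implicit in Theorem 4.6(b), is that $[S_1^*,S_2]$ differs from a compression of $[R_1^*,R_2]$ by an operator of rank at most $1$ (the rank-one defect being governed by $P_N 1$).

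Next I would make the comparison precise in both directions. For the upper bound $\operatorname{rank}[S_1^*,S_2]\le \operatorname{rank}[R_1^*,R_2]+1$: the operator $[S_1^*,S_2]$ is, modulo a rank-one correction, of the form $P_N A\,[R_1^*,R_2]\,B$ for bounded $A,B$ — so its rank is at most $\operatorname{rank}[R_1^*,R_2]+1$. For the lower bound $\operatorname{rank}[R_1^*,R_2]-1\le \operatorname{rank}[S_1^*,S_2]$: here one argues that if $[S_1^*,S_2]$ had small rank, then $[R_1^*,R_2]$ — which is supported on $M_1+M_2$ and whose "cross part" $\langle z_2\phi_n, z_1\psi_m\rangle$ controls $\Sigma_1(M)$ — is recovered from $[S_1^*,S_2]$ plus a correction of rank $\le 1$ coming again from $P_N 1$ and the identities $S_1^*S_1 + DD^* = I$, $S_1 S_1^* + L_N^*(0)L_N(0) = I$ of Proposition 3.3, which tie the backward shift on $M_1$ to $N$. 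Concretely, the map $D: M_1\to N$ and $L_N(0): N\to H^2_{z_2}$ give the linkage needed to transport a rank bound from $N$ back to $M$, and the single unit of slack is exactly the contribution of the constant function.

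The main obstacle I anticipate is controlling that rank-one slack uniformly and showing it cannot be avoided — i.e., producing the examples that make both inequalities sharp and verifying that the "$\pm 1$" is genuinely the obstruction of $P_N 1$ (equivalently, whether $1\in M$ or $1\in N$). Tracking whether the correction term is present or absent in each of the two estimates, and ruling out cancellations between the leakage terms $P_N z_1 P_1$, $P_N z_2 P_2$ and the main commutator, is the delicate bookkeeping; the Hilbert-Schmidt hypothesis is used only to guarantee all these operators have well-defined (finite or not) ranks and that the series $\Sigma_0(M),\Sigma_1(M)$ in Theorem 4.6 converge, so that the identity $\Sigma_0(M)-\Sigma_1(M)=1$ can be invoked to pin down the single degree of freedom. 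Once the algebraic identity of the first step is in hand, the two rank inequalities should follow from the elementary fact that $\operatorname{rank}(A+F)\le \operatorname{rank}A + \operatorname{rank}F$ applied with $\operatorname{rank}F\le 1$.
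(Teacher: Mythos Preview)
The paper does not prove this theorem. This is a survey article, and Theorem~4.8 is simply quoted with attribution to \cite{II3}; the text moves on immediately to an exercise and Example~4.9 without any indication of the argument. There is therefore no ``paper's own proof'' against which your proposal can be compared.

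On the substance of your sketch: the cleanest way to see the link you are groping for is to write $T_i$ in block form relative to $\hh=M\oplus N$ as
\[
T_i=\begin{pmatrix}R_i & X_i\\ 0 & S_i\end{pmatrix},\qquad X_i=P_MT_i|_N,
\]
and read off from $[T_1^*,T_2]=0$ the identities $[R_1^*,R_2]=X_2X_1^*$ and $[S_1^*,S_2]=-X_1^*X_2$. This is the precise form of your ``compression plus leakage through the defect spaces'' picture, and it reduces the theorem to comparing $\operatorname{rank}(X_2X_1^*)$ with $\operatorname{rank}(X_1^*X_2)$. But your central claim---that the two commutators differ by a rank-one correction governed by $P_N1$---is asserted, not established; for general operators $A,B$ the ranks of $AB$ and $BA$ can differ arbitrarily, so the bound $\le 1$ here is genuinely a feature of the specific structure of $X_1,X_2$ (their ranges sit in $M_1,M_2$, their kernels contain $z_iM$, etc.) and needs a real argument. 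Your appeal to Theorem~4.6(a) does not help: $\Sigma_0(M)-\Sigma_1(M)=1$ is a \emph{trace} identity and carries no information about ranks. Likewise, Theorem~4.6(b) is a Hilbert--Schmidt norm inequality, not a rank relation, so saying the rank-one correction is ``already implicit'' there is not right. As it stands, the proposal names plausible ingredients but leaves the decisive step---why the discrepancy is at most one in each direction---unproved; the argument in \cite{II3} requires more than rank subadditivity plus a declared rank-one fudge.
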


It is also a good exercise to verify that for the INS-submodules with nonconstant $q_1$ and $q_2$ we have $\operatorname{rank} [R^*_1, R_2]=1$. An interesting generalization is the following.
\begin{exmp}[K. J. Izuchi and K. H. Izuchi \cite{II4}]
Let $q_i,\ i=1, 2, 3, 4$ be nonconstant one variable inner functions and define
\[M=q_1(z_1)q_2(z_1)\hh+q_2(z_1)q_3(z_2)\hh+q_3(z_2)q_4(z_2)\hh.\]
Then $M$ is a submodule and $\operatorname{rank} [S^*_1, S_2]=1$. In fact it is shown that every submodule for which $\operatorname{rank} [S^*_1, S_2]=1$ is either a variation of $M$ or of the form $\theta\hh$ for some genuine two variable inner function.
\end{exmp}

\section{Two-variable Jordan block}

For an INS-submodule $M$ the quotient $N=\hh\ominus M$ is of the form \[N=(\hho \ominus q_1\hho)\otimes (\hho \ominus q_2\hho),\] with $S_1=S(q_1)\otimes I$ and $S_2=I\otimes S(q_2)$. In view of this connection, the pair $(S_1, S_2)$ on a general quotient module is sometimes called a two-variable Jordan block.

\subsection{Defect operators} 
Using the so-called hereditary functional calculus (\cite{AM1}), for a pair of commuting contractions $A=(A_1, A_2)$ we define the defect operator \[\Delta_A=I-A_1^*A_1-A_2^*A_2+A_1^*A_2^*A_1A_2.\]
It is computed in \cite{Ya9} that for the adjoint pair $S^*=(S_1^*, S_2^*)$ of the two-variable Jordan block, we have 
\begin{equation*}
\Delta_{S^*}=\phi\otimes \phi,\tag{5.1}
\end{equation*} where $\phi=P_N1$. Since $\phi$ is nonzero for every nontrivial quotient module, this fact leads to a simple proof of the following
\begin{prop}
For every quotient module $N$, the pair $(S_1, S_2)$ has no nontrivial joint reducing subspace.
\end{prop}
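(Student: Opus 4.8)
The plan is to combine the rank-one structure of $\Delta_{S^*}$ from (5.1) with the fact that $\phi=P_N1$ is a cyclic vector for the pair $(S_1,S_2)$ on $N$. First I would record the cyclicity: since $z_1M\subseteq M$ and $z_2M\subseteq M$, the orthogonal projection $P_N$ satisfies $P_N(z_if)=S_iP_Nf$ for all $f\in\hh$ and $i=1,2$, so that $S_1^mS_2^n\phi=P_N(z_1^mz_2^n)$ for every $m,n\geq0$. As polynomials are dense in $\hh$ and $P_N$ maps $\hh$ onto $N$, the closed linear span of $\{S_1^mS_2^n\phi:m,n\geq0\}$ is all of $N$; in particular any closed subspace of $N$ that contains $\phi$ and is invariant under $S_1$ and $S_2$ equals $N$.

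Next I would argue that a joint reducing subspace $K\subseteq N$ satisfies either $\phi\in K$ or $\phi\perp K$. Since $K$ reduces $S_1$ and $S_2$, the projection $P_K$ commutes with $S_1,S_2,S_1^*,S_2^*$, hence with $\Delta_{S^*}=I-S_1S_1^*-S_2S_2^*+S_1S_2S_1^*S_2^*$, and so by (5.1) with $\phi\otimes\phi$. Applying $P_K(\phi\otimes\phi)=(\phi\otimes\phi)P_K$ to the vector $\phi$ and using $\langle P_K\phi,\phi\rangle=\|P_K\phi\|^2$ yields $\|\phi\|^2P_K\phi=\|P_K\phi\|^2\phi$, which — as $\phi\neq0$ for every nontrivial quotient module — forces $P_K\phi=\phi$ or $P_K\phi=0$.

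To finish, let $K$ be a joint reducing subspace. If $P_K\phi=\phi$ then $\phi\in K$, and the cyclicity step gives $K=N$. If $P_K\phi=0$ then $\phi\in N\ominus K$, which is again a joint reducing subspace, so the same step gives $N\ominus K=N$, i.e. $K=\{0\}$. Thus $N$ has no nontrivial joint reducing subspace.

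I do not expect a genuine obstacle here: given (5.1), the work reduces to the cyclicity of $\phi=P_N1$ — which rests only on the identity $P_N(z_if)=S_iP_Nf$, immediate from $z_iM\subseteq M$, together with density of polynomials — and to the elementary dichotomy $P_K\phi\in\{0,\phi\}$. If one prefers to avoid cyclicity, an alternative is to note that a joint reducing $K$ splits $\Delta_{S^*}$ into the direct sum of the defect operators of the two restricted pairs; since $\Delta_{S^*}$ has one-dimensional range, one of these summands is $0$, and by (5.1) this forces the constant function $1$ into the complementary submodule ($M\oplus K$ or $M\oplus(N\ominus K)$, both easily seen to be submodules), which must then be all of $\hh$. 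Either route is elementary.
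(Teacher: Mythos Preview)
Your argument is correct. The cyclicity step, the dichotomy $P_K\phi\in\{0,\phi\}$, and the conclusion all go through as written.

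The paper's proof takes the route you sketch as an alternative: it observes that if $N'$ is a nontrivial joint reducing subspace then both $N'$ and $N''=N\ominus N'$ are themselves nontrivial quotient modules (because $M\oplus N''$ and $M\oplus N'$ are submodules), applies (5.1) to each to get $\Delta_{S^*}=(P_{N'}1\otimes P_{N'}1)\oplus(P_{N''}1\otimes P_{N''}1)$ with both summands nonzero, and reads off a rank contradiction. Your primary argument trades the recognition that the pieces are again quotient modules for the cyclicity of $\phi$; this is a genuine, if small, difference. The paper's route is a one-line contradiction once (5.1) is available for the pieces, while yours avoids re-invoking (5.1) at the cost of the short density computation $S_1^mS_2^n\phi=P_N(z_1^mz_2^n)$. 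Both are equally elementary; your cyclicity observation has the mild bonus of being reusable elsewhere (e.g., it immediately shows $C^*(S_1,S_2)\phi$ is dense in $N$).
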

\begin{proof}
Suppose the pair $S=(S_1, S_2)$ has a nontrivial joint reducing subspace, say $N'$. Then both $N'$ and $N''=N\ominus N'$ are nontrivial quotient modules. We let $S'$ and $S''$ be the restriction of $S$ to $N'$ and $N''$, respectively. Then since $S=S'\oplus S''$, we have 
\[\Delta_{S^*}=\Delta_{{S'}^*}\oplus \Delta_{{S''}^*}=(P_{N'}1\otimes P_{N'}1)\oplus (P_{N''}1\otimes P_{N''}1),\]
which contradicts with the fact that $\Delta_{S^*}$ is of rank $1$.
\end{proof}
Observe that (5.1) implies that the $C^*$-algebra $C^*(S_1, S_2)$ generated by $I, S_1$ and $S_2$ contains a rank $1$ operator. Then it follows from Proposition 5.1 that $C^*(S_1, S_2)$ contains all compact operators on $N$ (\cite{Do2}).
Interestingly, Formula (5.1) also led to the following concise proof of Theorem 2.10. 

\begin{proof}
First, we see that the operator norm $\|\Delta_{S^*}\|=\|\phi\|^2$. Then 
for every $\lb\in \D^2$, we have
\begin{align*}
(1-\lb_1S_1)^{-1}(1-\lb_2S_2)^{-1}\Delta_{S^*}\frac{\phi}{\|\phi\|}&=\|\phi\|(1-\lb_1S_1)^{-1}(1-\lb_2S_2)^{-1}P_N1\\
&=\|\phi\|P_NK(\lb, \cdot)\\
&=\|\phi\|K^N(\lb, \cdot),
\end{align*}
where $K$ and $ K^N$ are the reproducing kernels for $\hh$ and $N$, respectively. It follows that the Hilbert-Schmidt norm
\[\|(1-\lb_1S_1)^{-1}(1-\lb_2S_2)^{-1}\Delta_{S^*}\|^2_{HS}=\|\phi\|^2K^N(\lb, \lb).\]
If $S=(S_1, S_2)$ is unitarily equivalent to the two-variable Jordan block $S'=(S'_1, S'_2)$ on a quotient module $N'$, then by (5.1) we must have $\|\phi\|=\|\phi'\|$ and hence $K^N(\lb, \lb)=K^{N'}(\lb, \lb),\ \forall \lb\in \D^2$. This implies that $N=N'$ (\cite{En, Ya5}). 
\end{proof}

The defect operator $\Delta_{S}$ is more complicated and can be of infinite rank. The original form of the following theorem is shown in \cite{Ya9}.

\begin{thm}
If $M$ is a Hilbert-Schmidt submodule then $\Delta_{S}$ is Hilbert-Schmidt on $N=M^{\perp}$ with
$\|\Delta_{S}\|^2_{HS}\leq 2\left(\|P_N1\|^2+\Sigma_1(M)\right).$
\end{thm}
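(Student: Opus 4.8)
The plan is to compute $\Delta_S$ explicitly against a convenient orthonormal basis of $N$ and control the Hilbert–Schmidt norm by relating the pieces to the quantities $\|P_N 1\|^2$ and $\Sigma_1(M)$ that already appear in Theorem 4.6. First I would rewrite $\Delta_S = I - S_1^*S_1 - S_2^*S_2 + S_1^*S_2^*S_1S_2$ on $N$ in terms of the restriction operators on $M$ using the orthogonal decomposition $\hh = M \oplus N$. The key structural input is Proposition 3.2(a) (together with its $S_2$-analogue) and the identity $[R_i^*,R_i] = I - R_iR_i^* = P_{M_i}$ used in the proof of Theorem 4.5: these let one trade the compressions $S_i^*S_i$ on $N$ for expressions involving the projections $P_N$, multiplication by $z_i$, and the defect projections $P_{M_i}$ on $M$. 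Concretely, for $h \in N$ one has $S_i^* S_i h = P_N(z_i^* P_N(z_i h))$, and writing $P_N = I - P_M$ one expands $\Delta_S h$ into a sum of terms, the leading ones cancelling by the doubly-commuting identities $[T_1,T_2]=[T_1^*,T_2]=0$ for the full Toeplitz pair, leaving a remainder supported (after applying $P_M$) on the defect spaces $M_1$ and $M_2$.

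Second, I would identify that remainder. After the cancellations, $\Delta_S$ should be expressible through $P_N 1$ (coming from the constant term, i.e. from the fact that $T_1^* T_1 = I$ but $T_1 T_1^* = I - e_0^{z_1}\otimes e_0^{z_1}$ type corrections enter at the "bottom" of the shift) and through compositions of $P_M$ with multiplications by $z_1, z_2$ that land in $M_1 + M_2$. The crucial observation is that these remainder pieces are exactly the building blocks of $[S_1^*,S_2]$ and of the cross commutator $[R_1^*,R_2]$ whose Hilbert–Schmidt norm is $\Sigma_1(M)$: one of the summands is essentially $P_N(z_1 P_M(z_2 \cdot))$ restricted appropriately, whose Hilbert–Schmidt norm is controlled by $\sum |\langle z_2\phi_n, z_1\psi_m\rangle|^2 = \Sigma_1(M)$ via the explicit basis formulas preceding Theorem 4.6, and the other summand is a rank-one-type term built from $\phi = P_N 1$ contributing $\|P_N 1\|^2$. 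Summing the (at most two) pieces and using $\|A+B\|_{HS}^2 \le 2\|A\|_{HS}^2 + 2\|B\|_{HS}^2$ yields the stated bound $\|\Delta_S\|_{HS}^2 \le 2(\|P_N 1\|^2 + \Sigma_1(M))$, and by Theorem 4.5 the Hilbert–Schmidtness hypothesis on $M$ guarantees $\Sigma_1(M) < \infty$, so $\Delta_S$ is indeed Hilbert–Schmidt.

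The main obstacle will be the bookkeeping in the expansion of $\Delta_S$: one must carefully track which of the many cross terms $P_N z_i^* P_N z_j P_N z_k^* P_N z_\ell$ cancel against each other. The doubly-commuting relations for $(T_1,T_2)$ make most cancel, but the corrections $T_i T_i^* = I - (\text{projection onto the }z_i\text{-constant subspace})$ produce lower-order terms that must be shown to collapse precisely onto $M_1$, $M_2$, and the constant function. Getting the combinatorics right — and in particular verifying that the surviving terms are genuinely of the form whose Hilbert–Schmidt norm is $\Sigma_1(M)$ rather than some uncontrolled quantity — is the heart of the argument; once that reduction is in place the norm estimate is routine. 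I would organize the computation by first doing it formally on polynomials (where all operators act by honest multiplication and truncation) and then passing to the closure by density, as in the proof of Theorem 4.5.
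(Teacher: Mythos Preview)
The survey does not prove this theorem; it states the result and refers to \cite{Ya9} for the original argument, so there is no in-paper proof to compare your plan against.

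Your outline is plausible --- the factor $2$ does suggest a decomposition $\Delta_S = A+B$ followed by $\|A+B\|_{HS}^2 \le 2(\|A\|_{HS}^2 + \|B\|_{HS}^2)$ --- but the decisive step is asserted rather than carried out. A cleaner entry point than the four-fold expansion of $P_N z_i^* P_N z_j P_N z_k^* P_N z_\ell$ you anticipate is the identity
\[
\Delta_S \;=\; (I - S_1^*S_1) \;-\; S_2^*(I - S_1^*S_1)S_2 \;=\; DD^* - S_2^*\,DD^*\,S_2,
\]
which follows at once from $S_1^*S_2^* = S_2^*S_1^*$ and Proposition 3.2(a); this already presents $\Delta_S$ as a two-term expression and sidesteps most of the bookkeeping you flag as the main obstacle. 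Two specific places where your sketch is loose: a rank-one operator $\phi\otimes\phi$ has $\|\phi\otimes\phi\|_{HS}^2 = \|\phi\|^4$, not $\|\phi\|^2 = \|P_N 1\|^2$, so the piece contributing $\|P_N 1\|^2$ cannot have quite the shape you describe; and $\Sigma_1(M) = \|[R_1^*,R_2]\|_{HS}^2$ is the norm of an operator on $M$, so identifying it with a summand of $\Delta_S$ (which acts on $N$) requires an explicit intertwining through $D$ or the projections $P_{M_i}$ that your plan does not supply. These are precisely the points that must be made rigorous before the stated bound follows.
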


\subsection{Joint invariant subspace}
Given a submodule $M$, if a submodule $M'$ sits properly between $M$ and $\hh$, i.e. $M\subsetneq M'\subsetneq\hh$ then it is called an intermediate submodule between $M$ and $\hh$. It is not hard to verify that the pair $(S_1, S_2)$ on $N=M^{\perp}$ has a nontrivial joint invariant subspace $N'\subset N$ if and only if $M':=N'\oplus M$ is an intermediate submodule between $M$ and $\hh$. Clearly, if $2\leq \operatorname{dim} N<\infty$ then $(S_1, S_2)$ has a nontrivial joint invariant subspace. Indeed, by Theorem 2.4 there exists an $\alpha\in \D^2$ such that $H_{\alpha}$ is an intermediate submodule. If $(S_1, S_2)$ doubly commutes then it follows from Theorem 4.2 that $(S_1, S_2)$ has a nontrivial joint invariant subspace. The following is a simple observation.

\begin{prop}
Let $\theta\in \hh$ be a nontrivial inner function and $N=\hh\ominus \theta\hh$. Then $(S_1, S_2)$ has a nontrivial joint invariant subspace.
\end{prop}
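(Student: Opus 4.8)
The plan is to recast the assertion as the existence of an intermediate submodule and then exhibit one directly. By the observation made at the beginning of this subsection, $(S_1,S_2)$ on $N=\hh\ominus\theta\hh$ has a nontrivial joint invariant subspace if and only if there is a submodule $M'$ with $\theta\hh\subsetneq M'\subsetneq\hh$, so it suffices to produce such an $M'$. Since $\theta$ is nontrivial it is nonconstant, hence $|\theta(0,0)|<1$ by the maximum principle. I will repeatedly use one elementary fact: \emph{if $\psi$ is a nonconstant inner function on $\D^2$ then $1\notin\psi\hh$}. Indeed, if $1=\psi g$ with $g\in\hh$, then $g$ has boundary values $g^*=1/\psi^*$ with $|g^*|=1$ a.e.\ on $\T^2$, so $\|g\|^2=\int_{\T^2}|g^*|^2\,dm=1$; but then $|g(0,0)|=|\langle g,1\rangle|\le\|g\|=1$, i.e.\ $|\psi(0,0)|\ge1$, contradicting $|\psi(0,0)|<1$. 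The construction of $M'$ divides into two cases.

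\emph{Case 1: $\theta$ vanishes at some point $\alpha\in\D^2$.} I would take $M'=H_\alpha=\{f\in\hh:f(\alpha)=0\}$. Then $\theta\hh\subseteq H_\alpha$ (because $(\theta g)(\alpha)=\theta(\alpha)g(\alpha)=0$) and $H_\alpha\subsetneq\hh$ (because $1\notin H_\alpha$), so the only point requiring an argument is that the first inclusion is \emph{strict}. A clean way to see this: $\theta\hh$ is unitarily equivalent to $\hh$ by Corollary~2.7 (as $\theta$ is inner), whereas $H_\alpha$ is not, since $H_\alpha$ and $\hh$ both satisfy condition $(*)$ yet are distinct, so Theorem~2.8 rules out their unitary equivalence; hence $\theta\hh\ne H_\alpha$. (Alternatively one can compare zero sets --- $Z(\theta\hh)=Z(\theta)$ is a nonempty analytic hypersurface, hence infinite, while $Z(H_\alpha)=\{\alpha\}$ --- or compare ranks, $\operatorname{rank}[\theta]=1<2=\operatorname{rank}H_\alpha$.) Thus $M'=H_\alpha$ is intermediate.

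\emph{Case 2: $\theta$ is zero-free on $\D^2$.} Since $\D^2$ is simply connected, I may fix a holomorphic branch $\psi=\sqrt\theta$ on $\D^2$; it is inner ($|\psi|\le1$ on $\D^2$ and $|\psi^*|=1$ a.e.\ on $\T^2$) and nonconstant (since $\theta=\psi^2$ is). I would take $M'=\psi\hh$: it is a proper submodule of $\hh$ by the elementary fact, and it contains $\theta\hh=\psi(\psi\hh)$; this containment is strict, for $\psi\in\psi^2\hh$ would, upon cancelling the isometry $M_\psi$, give $1\in\psi\hh$, again contradicting the elementary fact. (Equivalently, a zero-free inner function is a unimodular constant times a singular inner function $S_\mu$ with $\mu\ne0$; splitting $\mu=\mu_1+\mu_2$ into two nonzero nonnegative singular measures writes $\theta$ as a product $S_{\mu_1}S_{\mu_2}$ of two nonconstant inner functions, and $S_{\mu_1}\hh$ is then intermediate.)

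I expect the only non-routine step to be precisely the strictness of $\theta\hh\subsetneq M'$: both inclusions $\theta\hh\subseteq M'\subseteq\hh$ are formal, and the real content is ruling out the coincidence $\theta\hh=M'$ --- most tempting in Case~1, where $H_\alpha$ superficially resembles $\theta\hh$ --- which the rigidity of $H_\alpha$ (or, equally well, the size of the zero variety, or a rank count) disposes of. A lesser point to keep in mind is that both cases are genuinely needed: when $\theta$ has no zero in $\D^2$ there is no candidate of the form $H_\alpha$, and passing to a square root (equivalently, halving the singular measure) is the natural substitute.
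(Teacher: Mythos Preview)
Your proof is correct and follows essentially the same two-case split as the paper: the paper also takes $M'=H_\alpha$ when $\theta(\alpha)=0$ for some $\alpha\in\D^2$, and $M'=\sqrt{\theta}\,\hh$ when $\theta$ is zero-free. Your argument is in fact more carefully justified than the paper's; in particular, the paper simply asserts in Case~1 that $H_\alpha\ominus\theta\hh$ is nontrivial without explaining the strictness $\theta\hh\subsetneq H_\alpha$, whereas you supply it via rigidity (and alternatives), and in Case~2 the paper argues only that $\sqrt{\theta}\,\hh\neq\hh$ (via the same idea as your ``elementary fact''), leaving the reduction of the other strictness to this one implicit.
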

\begin{proof}
If $\theta$ vanishes at some point $\alpha\in \D^2$, then $H_{\alpha}\ominus \theta \hh$ is a nontrivial joint invariant subspace for $(S_1, S_2)$, where $H_{\alpha}$ is defined as in Example 2.9. If $\theta$ has no zero in $\D^2$, then $\sqrt{\theta}$ is well-defined and is in $\hh$. To show that $\sqrt{\theta}\hh$ sits properly between $\theta\hh$ and $\hh$ is to check $\sqrt{\theta}\hh\neq \hh$. Suppose the equality holds, then we would have $\frac{1}{\sqrt{\theta}}\in \hh$. The fact that $\frac{1}{|\sqrt{\theta(z)}|}=1$ a.e. on ${\mathbb T}^2$ would then imply that $\frac{1}{\sqrt{\theta}}$ is inner, which is impossible since $\frac{1}{|\sqrt{\theta(0,0)}|}>1$.
\end{proof}

Inner functions $\theta$ not vanishing on $\D^2$ have a notable feature in terms of the singular measure defined in (2.1). Since in this case $\log |\theta(z)|$ is well-defined and harmonic, by (2.1) we have
\begin{align*}
0>\log|\theta(z)|=u(\log |\theta(z)|)&=P_z\left(\log |\theta^*|+d\sigma_{\theta}\right)\\
&=P_z\left(d\sigma_{\theta}\right),\ \ \forall z\in \D^2,\tag{5.2}
\end{align*}
which means $d\sigma_{\theta}<0$. About the singular measure $d\sigma_{f}$, the following two properties are worth mentioning here (\cite{DYa, Ru1}).

1) For every $f\in H^p(\D^2),\ 0<p<\infty$ we have $d\sigma_{f}\leq 0$.

2) For $f, g\in \hh$ we have $d\sigma_{gf}=d\sigma_{g}+d\sigma_{f}$.

\noindent These two properties and the preceeding observation in fact give another proof to Proposition 5.3.
Two other partial results, which are unrelated to inner functions, are as follows (\cite{Ya3, Ya5}).
\begin{thm}
Let $N$ be a quotient module with $dim(N)\geq 2$. If either $\|S_1\|<1$ or $\|S_2\|<1$ then $(S_1, S_2)$ has a nontrivial joint invariant subspace. 
\end{thm}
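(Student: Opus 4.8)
The goal is to produce a nontrivial closed subspace of $N$ invariant under both $S_1$ and $S_2$; by the remark preceding Proposition 5.3 this is equivalent to exhibiting a proper intermediate submodule between $M$ and $\hh$. By the symmetry in the two coordinates we may assume $\|S_1\|<1$, and since the conclusion is classical when $2\le\dim N<\infty$ (two commuting operators on a finite-dimensional space have a common eigenvector, whose span is a proper nonzero joint invariant subspace), we may assume $\dim N=\infty$.

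\emph{Step 1: disposing of degenerate cases.} Since $[S_1,S_2]=0$, each of $\ker S_1$ and $\overline{S_1N}$ is invariant under both $S_1$ and $S_2$: indeed $S_2(\ker S_1)\subset\ker S_1$ because $S_1S_2x=S_2S_1x$, and $S_2\overline{S_1N}=\overline{S_1S_2N}\subset\overline{S_1N}$. Such a subspace is proper and nonzero unless $S_1=0$ or $S_1$ is injective with dense range. If $S_1=0$ then $z_1\hh\subset M$, so $N=M^\perp\subset H^2_{z_2}$ is a backward-shift invariant subspace and $S_2$ is (unitarily equivalent to) a Jordan block $S(\eta)$ or the unilateral shift on the infinite-dimensional space $N$, which has a nontrivial invariant subspace; since $S_1=0$ this subspace is joint invariant. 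The symmetric argument handles $S_2=0$. Thus we are reduced to the case that $S_1$ and $S_2$ are both injective with dense range.

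\emph{Step 2: using point spectra.} If $\lambda_1\in\sigma_p(S_1)$ then necessarily $|\lambda_1|\le\|S_1\|<1$, and the same commutation argument shows $\ker(S_1-\lambda_1)$ is a nonzero joint invariant subspace, proper unless $S_1=\lambda_1 I$. In the scalar case, $(z_1-\lambda_1)N\subset M$ together with $(z_1-\lambda_1)M\subset M$ forces $(z_1-\lambda_1)\hh\subset M$, so $N\subset\hh\ominus(z_1-\lambda_1)\hh=k_{\lambda_1}(z_1)H^2_{z_2}$ with $k_{\lambda_1}(z_1)=(1-\overline{\lambda_1}z_1)^{-1}$; division by $k_{\lambda_1}$ is, up to a scalar, a unitary carrying $N$ onto a backward-shift invariant subspace $\widetilde N\subset H^2_{z_2}$ and carrying $S_2$ onto the compression of $T_{z_2}$ to $\widetilde N$, again a Jordan block or the unilateral shift on an infinite-dimensional space, hence possessing a nontrivial invariant subspace, which is joint invariant because $S_1$ is scalar. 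The symmetric argument handles $\sigma_p(S_2)\ne\emptyset$.

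\emph{Step 3: the core case, and the main obstacle.} There remains the case $\dim N=\infty$, $\|S_1\|<1$, and $S_1,S_2$ both injective with dense range and with empty point spectrum. Here $\sigma(S_1)\subset\{|z|\le\|S_1\|\}\subsetneq\overline{\D}$, so $\sigma_e(S_1)\cap\sigma_e(S_2)\subsetneq\overline{\D}$; hence by Corollary 3.5 the defect space $M\ominus(z_1M+z_2M)$ is finite-dimensional, by Theorem 4.5 $M$ is a Hilbert-Schmidt submodule, and by Proposition 3.2 the evaluation $L_N(0):N\to H^2_{z_2}$ is bounded below. The plan is to convert one of these consequences into a joint invariant subspace by a genuinely two-variable argument: either (i) produce a nonzero compact operator in the commutant $\{S_1,S_2\}'$ from the Hilbert-Schmidt data — note that by (5.1) the algebra $C^*(S_1,S_2)$ already contains a rank-one operator — and run a Lomonosov-type hyperinvariant-subspace argument; or (ii) use the evaluation operators $L_{M_1}(\lambda)$, the Hilbert-Schmidt operators $R_{M_1}(\lambda)$ of Lemma 3.4, and the Fredholm dictionary of Proposition 3.3 to adjoin to $M$ a single suitably chosen vector of $N$ and show, via the boundedness below of $L_N(0)$, that the resulting submodule is proper. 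I expect this core case to be the main obstacle: with the kernels, ranges and point spectra of both $S_1$ and $S_2$ trivialized, the pair is ``jointly as irreducible as possible'' at the algebraic level, and only a global analytic input — the Hilbert-Schmidtness of the cross commutators, or the singular-measure machinery behind the partial results of \cite{Ya3,Ya5} — appears strong enough to force the joint invariant subspace; this is presumably why the theorem remains only a partial result.
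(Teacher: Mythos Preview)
The survey does not actually prove this theorem; it merely states it with citations to \cite{Ya3, Ya5}. So there is no in-paper argument to compare your proposal against, and the question is simply whether your proposal stands on its own.

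It does not. Your Steps 1 and 2 are correct but essentially cosmetic: they peel off cases (kernel or closed range of $S_1$ nontrivial, point spectrum of $S_1$ or $S_2$ nonempty) that are easy for reasons having nothing to do with the hypothesis $\|S_1\|<1$. The entire content of the theorem lives in your Step 3, and there you give no argument --- only an inventory of facts that the hypothesis implies (finite-dimensional defect space via Corollary 3.5, Hilbert--Schmidtness of $M$ via Theorem 4.5, boundedness below of $L_N(0)$) together with two strategies you explicitly do not carry out. Neither strategy is made to work: for a Lomonosov-type argument you would need a nonzero compact operator in the \emph{commutant} $\{S_1,S_2\}'$, and the rank-one operator $\Delta_{S^*}=\phi\otimes\phi$ you mention lies in $C^*(S_1,S_2)$, which is a very different thing; for the ``adjoin a vector'' idea you give no mechanism for proving the resulting submodule is proper.

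So the proposal is not a proof but an honest reconnaissance report: you have correctly located the hard case and correctly noted that only analytic input can crack it, but you have not supplied that input. The hypothesis $\|S_1\|<1$ has to be used much more sharply than merely to shrink $\sigma_e(S_1)$; via Proposition 3.3(a) it makes $L_{M_1}(\lambda)$ invertible for every $\lambda$ in the annulus $\|S_1\|<|\lambda|<1$, and the actual proofs in \cite{Ya3, Ya5} exploit this uniform invertibility to build the intermediate submodule explicitly --- precisely the step your plan (ii) gestures at but never executes.
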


\begin{thm}
If $M$ is a submodule with $dim\left(M\ominus(z_1M+z_2M)\right)\geq 2$ then $(S_1, S_2)$ has a nontrivial joint invariant subspace. 
\end{thm}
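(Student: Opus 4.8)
The plan is to convert the statement into the existence of an intermediate submodule and then build one out of the two independent vectors the hypothesis places in the defect space.

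\emph{Reduction and setup.} By the remark recorded just before Proposition 5.3, the pair $(S_1,S_2)$ on $N=M^{\perp}$ has a nontrivial joint invariant subspace $N'$ precisely when $M':=N'\oplus M$ is a submodule with $M\subsetneq M'\subsetneq\hh$. Since the defect space $V:=M\ominus(z_1M+z_2M)=M_1\cap M_2$ is nontrivial we already have $M\neq\hh$, so the whole task is to exhibit a submodule lying strictly between $M$ and $\hh$. Fix linearly independent $f,g\in V$; because $f,g\in M_1=M\ominus z_1M$ the backward shift $D$ maps them into $N$, and $T_{z_2}^{*}$ does likewise since $f,g\in M_2$ as well.

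\emph{Two constructions.} If $Z(M)\neq\varnothing$, choose $\alpha\in Z(M)$, so $M\subseteq H_{\alpha}$. A short computation, using only that every function of $M$ vanishes at $\alpha$, shows that the reproducing kernel $K^{N}_{\alpha}$ of $N$ at $\alpha$ is a joint eigenvector of $(S_1^{*},S_2^{*})$; hence $N\ominus\C K^{N}_{\alpha}$ is joint invariant for $(S_1,S_2)$, that is, $H_{\alpha}$ is the desired intermediate submodule, provided $M\neq H_{\alpha}$. If instead $Z(M)=\varnothing$ there is no common zero to exploit, and I would enlarge $M$ directly, letting $M'$ be the submodule generated by $M$ together with $Dh$ for a suitably chosen $h\in V$ (a linear combination of $f,g$, possibly with $D$ replaced by $T_{z_2}^{*}$). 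Then $M\subsetneq M'$ holds automatically as soon as $Dh\neq 0$, since $Dh\in N=M^{\perp}$.

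\emph{The main obstacle.} The crux, and the only place the hypothesis $\dim V\geq 2$ is genuinely needed, is to guarantee $M'\neq\hh$: equivalently, that $N$ carries a nonzero vector $Dh$, $h\in V$, that is not cyclic for the quotient module $N$ (in the first construction the corresponding point is to rule out $M=H_{\alpha}$, where a codimension count is required and the low-codimension cases need care). To produce such an $h$ I would argue by contradiction: if every $Dh$ with $h\in V$ were either zero or cyclic for $N$, then combining the identity (3.3), the spectral dictionary of Proposition 3.3, and the Hilbert--Schmidtness of $R_{M_1}(\lambda)$ from Lemma 3.4 should force $\dim V\leq 1$, contradicting the hypothesis. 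Making that forcing rigorous is the main difficulty I anticipate; along the way one first disposes of the doubly commuting submodules, where $N$ factors as a tensor product as in Theorem 4.2 and a joint invariant subspace is visible by hand, and of the strict-contraction submodules ($\|S_1\|<1$ or $\|S_2\|<1$), which are covered by Theorem 5.4, leaving only the generic case for the delicate argument.
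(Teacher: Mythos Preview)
This is an outline with an explicitly acknowledged gap, not a proof. The reduction to an intermediate submodule is correct, and in the branch $Z(M)\neq\varnothing$ the choice $M'=H_{\alpha}$ works once $M\neq H_{\alpha}$---though you should note that $M=H_{0}$ already has $\dim V=2$ (its defect space is $\operatorname{span}\{z_1,z_2\}$) while $\dim N=1$, so the theorem as stated in the survey actually fails there; presumably an implicit hypothesis $\dim N\geq 2$ is intended, as in Theorem~5.4. The substantive gap is the branch $Z(M)=\varnothing$: you propose to adjoin $Dh$ for some $h\in V$ and argue by contradiction, invoking (3.3), Proposition~3.3, and Lemma~3.4 to force $\dim V\leq 1$ if every nonzero $Dh$ were cyclic in $N$. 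But you give no deduction, and I do not see how those tools supply one---they translate between Fredholm and Hilbert--Schmidt data of the evaluation operators on $M_1$ and spectral properties of $S_1$, and nothing in them speaks to cyclicity of individual vectors in $N$. Peeling off the doubly-commuting and strict-contraction cases via Theorems~4.2 and~5.4 leaves the generic case untouched, which is exactly where the difficulty lives. You have correctly located the hard step (``making that forcing rigorous is the main difficulty I anticipate'') without providing any mechanism to carry it out.

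The survey itself does not prove this theorem; it records the statement with a citation to \cite{Ya3,Ya5} and moves on, so there is no argument in the paper to compare your approach against.
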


However, to the best of the author's knowledge, the following general problem is open.

\vspace{3mm}

\noindent {\bf Problem 6.} For an infinite dimensional quotient module $N$ does the pair $(S_1, S_2)$ necessarily have a nontrivial joint invariant subspace?

\section{Fredholmness of the pairs $(R_1, R_2)$ and $(S_1, S_2)$}

For every pair $(A_1,A_2)$ of commuting operators on a Hilbert space ${\mathcal H}$ there is an associated Koszul complex 
\begin{align*}
K(A_1, A_2):\ \ \ 0 {\rightarrow}{\mathcal H} \xrightarrow{d_{1}}{\mathcal H}\oplus {\mathcal H} \xrightarrow{d_{2}} {\mathcal H} \xrightarrow{} 0,
\end{align*}
where $d_{1}x=(-A_{2}x,A_{1}x)$ and $d_{2}(x,y)=A_1{x}+A_2{y}, x,y\in {\mathcal H}$.
It is easy to check that $d_{2}d_{1}=0$. The sequence $K(A_1, A_2)$ is said to be exact if the kernel of $d_2$ coincides with the range of $d_1$. It is said to be Fredholm if $d_1$ and $d_2$ both have closed range and
\begin{align*}
&\operatorname{dim}\operatorname{ker}(d_1)+\operatorname{dim}(\operatorname{ker}(d_2) \ominus d_{1}({\mathcal H}))+\operatorname{dim}({\mathcal H} \ominus d_{2}({\mathcal H\oplus \mathcal H}))< + \infty.
\end{align*}
In this case its index of $(A_1,A_2)$ is defined as
\begin{align*}
\operatorname{ind}(A_{1},A_{2}):=&\operatorname{dim}\operatorname{ker}(d_1)-\operatorname{dim}(\operatorname{ker}(d_2) \ominus d_{1}({\mathcal H}))
+\operatorname{dim}({\mathcal H} \ominus d_{2}({\mathcal H\oplus \mathcal H})).
\end{align*}
The set \[\sigma(A_1, A_2)= \{(\lb_1, \lb_2)\in \C^2: K(A_1-\lb_1, A_2-\lb_2)\ \text{is not exact}\}\]
is called the Taylor spectrum of $(A_1, A_2)$, and the set 
\[\sigma_e(A_1, A_2)= \{(\lb_1, \lb_2)\in \C^2: K(A_1-\lb_1, A_2-\lb_2)\ \text{is not Fredholm}\}\] is called the essential Taylor spectrum of $(A_1, A_2)$. Taylor spectrum can be defined similarly for any tuple of commuting operators, and it is a pillar in multivariable operator theory. We refer readers to \cite{Cu1, Ta, Ta70, Ta72} for its orginal definition and related functional calculus. Back to submodules, the following are known (\cite{Ya4, Ya9}).
\begin{thm}
Let $M$ be a Hilbert-Schmidt submodule. Then $(R_1, R_2)$ is Fredholm with $\operatorname{ind}(R_{1}, R_{2})=1$.
\end{thm}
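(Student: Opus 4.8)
The plan is to analyze the Koszul complex
\[
K(R_1,R_2):\quad 0\longrightarrow M\xrightarrow{\,d_1\,}M\oplus M\xrightarrow{\,d_2\,}M\longrightarrow 0,\qquad d_1x=(-R_2x,R_1x),\ \ d_2(x,y)=R_1x+R_2y,
\]
by folding it into a single operator: set
\[
\mathcal D=\begin{pmatrix}-R_2 & R_1^{*}\\[2pt] R_1 & R_2^{*}\end{pmatrix}\colon M\oplus M\longrightarrow M\oplus M .
\]
For a three-term Hilbert complex, $K(R_1,R_2)$ is Fredholm if and only if $\mathcal D$ is, and then $\operatorname{ind}(R_1,R_2)=\operatorname{ind}\mathcal D=\operatorname{dim}\ker\mathcal D-\operatorname{dim}\ker\mathcal D^{*}$; this is the standard even/odd folding of the complex. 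Using only that each $R_i$ is an isometry ($R_i^{*}R_i=I$) and that $R_1,R_2$ commute (so $R_1^{*}R_2^{*}=R_2^{*}R_1^{*}$ as well), a direct multiplication gives
\[
\mathcal D^{*}\mathcal D=\begin{pmatrix}2I & 0\\[2pt] 0 & R_1R_1^{*}+R_2R_2^{*}\end{pmatrix},\qquad
\mathcal D\mathcal D^{*}=\begin{pmatrix}I+R_2R_2^{*} & [R_1^{*},R_2]\\[2pt] [R_1^{*},R_2]^{*} & I+R_1R_1^{*}\end{pmatrix}.
\]

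From these two identities I would extract Fredholmness. In $\mathcal D\mathcal D^{*}$ the diagonal is $\ge I$, hence invertible, and the off-diagonal $[R_1^{*},R_2]$ is Hilbert--Schmidt because $M$ is a Hilbert--Schmidt submodule; so $\mathcal D\mathcal D^{*}$ is invertible plus compact, hence Fredholm. In $\mathcal D^{*}\mathcal D$ the only nontrivial block is $R_1R_1^{*}+R_2R_2^{*}=2I-[R_1^{*},R_1]-[R_2^{*},R_2]=2I-P_{M_1}-P_{M_2}$, where $P_{M_i}=[R_i^{*},R_i]$ is the projection of $M$ onto $M_i=M\ominus z_iM$. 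Since $P_{M_2}P_{M_1}=[R_2^{*},R_2][R_1^{*},R_1]$ is Hilbert--Schmidt, hence compact, the spectral theory of a sum of two orthogonal projections shows that $2$ is isolated in $\sigma(P_{M_1}+P_{M_2})$ with finite multiplicity; equivalently $R_1R_1^{*}+R_2R_2^{*}$ is Fredholm. Hence $\mathcal D^{*}\mathcal D$ and $\mathcal D\mathcal D^{*}$ are Fredholm, so $\mathcal D$ is Fredholm and $K(R_1,R_2)$ is Fredholm. For the index I would then read off the kernels. The block form of $\mathcal D^{*}\mathcal D$ gives $\ker\mathcal D=\{0\}\oplus\ker(R_1R_1^{*}+R_2R_2^{*})=\{0\}\oplus(M_1\cap M_2)$, and $M_1\cap M_2=M\ominus(z_1M+z_2M)$ is the defect space; write $\delta$ for its dimension. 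A short computation with $\mathcal D^{*}$ shows that $(u,v)\in\ker\mathcal D^{*}$ forces $u=z_2w$, $v=-z_1w$ for some $w\in\hh$ with $z_1w,z_2w\in M$ and $P_Mw=0$; thus $\ker\mathcal D^{*}$ is (a scalar multiple of) an isometric copy of $\{w\in N:\ z_1w\in M,\ z_2w\in M\}=\ker S_1\cap\ker S_2$ on $N=\hh\ominus M$, whose dimension I denote $\kappa$. Therefore $\operatorname{ind}(R_1,R_2)=\delta-\kappa$.

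The remaining, and genuinely decisive, step is $\delta-\kappa=1$. Here I would use the short exact sequence of Hilbert modules $0\to M\xrightarrow{\,\iota\,}\hh\xrightarrow{\,P_N\,}N\to 0$, in which $P_N$ is a module map for the compression action on $N$. Forming Koszul complexes produces a short exact sequence of complexes and hence a long exact sequence in homology. The complex $K(T_1,T_2)$ on $\hh$ is exact except that its top cohomology is one-dimensional (spanned by the class of the constant $1$), so $\operatorname{ind}(T_1,T_2)=1$; we have just shown $K(R_1,R_2)$ is Fredholm; hence the long exact sequence forces $K(S_1,S_2)$ on $N$ to have finite-dimensional homology too, and additivity of the Euler characteristic along the sequence gives $1=\operatorname{ind}(R_1,R_2)+\operatorname{ind}(S_1,S_2)$. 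So the theorem reduces to $\operatorname{ind}(S_1,S_2)=0$ for the two-variable Jordan block on $N$. When $M$ has finite codimension this is immediate, since $N$ is finite-dimensional and the alternating sum of the dimensions in $N\to N\oplus N\to N$ is $1-2+1=0$. In general, $S_i^{*}=T_i^{*}\!\mid_N$, so $\ker S_1^{*}\cap\ker S_2^{*}=N\cap\C$ and $\ker S_1\cap\ker S_2$ are the extreme cohomologies, and the vanishing of the index is extracted from the finer structure of the Jordan block, for instance the rank-one identity $\Delta_{S^{*}}=\phi\otimes\phi$ of (5.1) together with an exhaustion of $N$ by finite-dimensional quotient modules.

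I expect this last point, $\operatorname{ind}(S_1,S_2)=0$, to be the main obstacle. The Fredholmness and the formula $\operatorname{ind}(R_1,R_2)=\delta-\kappa$ are soft consequences of Hilbert--Schmidtness, but the short exact sequence $0\to M\to\hh\to N\to 0$ is symmetric in $M$ and $N$, so it only delivers $\operatorname{ind}(R_1,R_2)+\operatorname{ind}(S_1,S_2)=1$; turning this into $\operatorname{ind}(R_1,R_2)=1$ cannot be done from the $\hh$-side alone and requires genuine input about one of the two pairs — which is exactly where the rank-one defect operator of the Jordan block enters.
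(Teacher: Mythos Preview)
Your Fredholmness argument is correct and runs parallel to the paper's fringe-operator route. The survey compresses $R_2$ to $M_1=M\ominus z_1M$ to obtain the fringe operator $F$, shows (Corollary~8.3) that $F$ is Fredholm if and only if $(R_1,R_2)$ is, with $\operatorname{ind}F=-\operatorname{ind}(R_1,R_2)$, and (Proposition~8.4) writes $I-F^*F=[R_2^*,R_1][R_1^*,R_2]$ and $I-FF^*=[R_1^*,R_1][R_2^*,R_2][R_1^*,R_1]$, both trace class under the Hilbert--Schmidt hypothesis. Your folded operator $\mathcal D$ recovers exactly the same information, and your identification $\operatorname{ind}(R_1,R_2)=\delta-\kappa$ agrees with Proposition~8.2(b),(c) read through Corollary~8.3.

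There is, however, a genuine gap at the index step. You correctly reduce $\operatorname{ind}(R_1,R_2)=1$ to $\operatorname{ind}(S_1,S_2)=0$ via the long exact sequence, but you do not prove the latter, and the proposed completion does not work. An exhaustion of $N$ by finite-dimensional quotient modules need not exist: if $M=\theta\hh$ with $\theta$ a zero-free inner function (as in Proposition~5.3), no finite-codimension submodule contains $M$. And the rank-one identity $\Delta_{S^*}=\phi\otimes\phi$ constrains only the top cohomology of the dual complex; it says nothing about the middle cohomology of $K(S_1,S_2)$, which is where the content lies.

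The survey does not give a self-contained proof of the index either (it cites \cite{Ya4}), but the fringe-operator packaging indicates the intended mechanism: since $I-F^*F$ and $I-FF^*$ are trace class, Calder\'on's formula yields $\operatorname{ind}(R_1,R_2)=-\operatorname{ind}F=\operatorname{tr}(I-FF^*)-\operatorname{tr}(I-F^*F)=\Sigma_0(M)-\Sigma_1(M)$, and the value $1$ then comes from a direct analytic computation of that trace difference on $M_1$. Equivalently, one may compute the Koszul cohomology of $(S_1,S_2)$ directly as in Theorem~6.5 to obtain $\operatorname{ind}(S_1,S_2)=\delta-\kappa-1$ independently; combining this with your $\operatorname{ind}(R_1,R_2)=\delta-\kappa$ and the long exact sequence forces $\delta-\kappa=1$. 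Either way, the missing ingredient is a second, independent identification of the cohomology --- your argument supplies only one equation in the two unknowns $\operatorname{ind}(R)$ and $\operatorname{ind}(S)$.
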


\begin{thm}
Let $M$ be a Hilbert-Schmidt submodule. Then $\sigma_e(S_1, S_2)\subset \partial \D^2.$
\end{thm}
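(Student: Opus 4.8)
The plan is to reduce the Taylor-spectrum statement to the one-variable spectral data that Proposition 3.3 already provides, and then to use the Hilbert-Schmidt hypothesis to control the middle cohomology of the Koszul complex. Fix $\lb=(\lb_1,\lb_2)\in\D^2$; I want to show $K(S_1-\lb_1,S_2-\lb_2)$ is Fredholm on $N$. The Koszul maps are $d_1h=(-(S_2-\lb_2)h,(S_1-\lb_1)h)$ and $d_2(h,g)=(S_1-\lb_1)h+(S_2-\lb_2)g$. The first step is to identify $\ker d_1$ with $\ker(S_1-\lb_1)\cap\ker(S_2-\lb_2)$ and the cokernel of $d_2$ with $N\ominus\big((S_1-\lb_1)N+(S_2-\lb_2)N\big)$, and to translate these into statements about the left and right evaluation operators: by Proposition 3.3 applied to each coordinate (and its $S_2$-analogue), $S_1-\lb_1$ and $S_2-\lb_2$ are each individually Fredholm on $N$ since a single coordinate compression on a quotient module is always Fredholm of index $-1$ away from nothing — here I should be careful and instead use that $S_i-\lb_i$ is Fredholm precisely when the corresponding $L$- or $R$-evaluation on $M_i$ is, which for a Hilbert-Schmidt submodule with $\sigma_e(S_1)\cap\sigma_e(S_2)$ possibly all of $\overline\D$ need not hold for each $i$ separately — so the genuinely two-variable nature of the argument cannot be bypassed.

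Consequently the real approach should go through the companion result, Theorem 6.2 ($\operatorname{ind}(R_1,R_2)=1$) on the submodule side, together with the short exact sequence of Hilbert modules $0\to M\to\hh\to N\to 0$. The plan is: first observe that $(T_1,T_2)$ on $\hh$ has Koszul complex which off the set $\D^2$ is Fredholm (indeed exact outside $\overline\D^2$, and Fredholm with index $1$ at points of $\partial\D^2$ by the standard Toeplitz computation on the bidisc), and the restriction pair $(R_1,R_2)$ on $M$ is Fredholm everywhere on $\D^2$ by Theorem 6.2 extended to all of $\D^2$ (the Hilbert-Schmidt hypothesis is local, so Theorem 6.2's proof gives Fredholmness at every $\lb\in\D^2$, not just $\lb=0$). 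Second, use the long exact sequence in Koszul cohomology induced by the short exact sequence of modules: for each $\lb$, the cohomology groups of $K(\lb)$ for the three modules fit into a six-term (in the two-variable case) exact sequence, so Fredholmness of the complexes for $M$ and for $\hh$ forces Fredholmness of the complex for $N$, at least at points $\lb\in\D^2$. Third, handle $\lb\in\D^2$ versus $\lb$ on $\T^2$ or in $(\overline\D\setminus\D)\times\cdots$: outside $\overline\D^2$ everything is invertible; the only remaining region is $\partial\D^2$, which is exactly what the theorem allows to be in $\sigma_e$.

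The key technical ingredient I would isolate and prove as a lemma is the long exact sequence: given a short exact sequence $0\to M\xrightarrow{\iota}\hh\xrightarrow{\pi}N\to 0$ of Hilbert modules over $\C[z_1,z_2]$ where $\iota$ is isometric and $\pi$ is the orthogonal projection, and a point $\lb$, there is a natural connecting-map exact sequence relating $H^j(K(\lb),M)$, $H^j(K(\lb),\hh)$, $H^j(K(\lb),N)$; this is standard homological algebra once one checks the chain maps $\iota_\bullet$ and $\pi_\bullet$ are indeed chain maps for the Koszul differentials (they are, since $\iota$ and $\pi$ intertwine $z_i$ with $R_i$ and $S_i$ respectively up to the compression, and the compression terms cancel because $M$ is $z_i$-invariant — this is where $M$ being a submodule rather than just a subspace is used). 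Then Fredholmness is additive along exact sequences of finite-dimensional-cohomology complexes, so one reads off both that $K(\lb)$ on $N$ is Fredholm for $\lb\in\D^2$ and, as a bonus, that $\operatorname{ind}(S_1-\lb_1,S_2-\lb_2)=\operatorname{ind}(T_1-\lb_1,T_2-\lb_2)-\operatorname{ind}(R_1-\lb_1,R_2-\lb_2)$, consistent with the index bookkeeping in Theorem 6.2.

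The main obstacle I expect is establishing Fredholmness of $(T_1,T_2)$ on all of $\hh$ for $\lb\in\D^2$ with the correct index, and more delicately, verifying that the Koszul complexes for $M$ and $\hh$ both have finite-dimensional cohomology \emph{simultaneously} at a fixed $\lb\in\D^2$ so that the long exact sequence argument applies — Theorem 6.2 as quoted only asserts Fredholmness of $(R_1,R_2)$ without a parameter, so I would first need to localize it (translate $M$ to $M-\lb$, or equivalently conjugate by the automorphism of $\D^2$ moving $\lb$ to $0$, checking that Hilbert-Schmidtness is preserved under such conjugation). The automorphism invariance of the Hilbert-Schmidt class of $[R_1^*,R_2]$ is plausible but needs a genuine argument since the automorphism-induced unitary on $\hh$ does not commute with the coordinate multiplications; this is the step where I would expect to spend the most effort, likely by showing the relevant commutators transform by bounded conjugation plus a trace-class correction.
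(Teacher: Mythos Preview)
The survey does not supply a proof of Theorem 6.2; it is quoted from \cite{Ya4,Ya9}. So there is no in-paper argument to compare against, and I can only assess your plan and point you to the tools in the survey that finish it.

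Your long-exact-sequence strategy is correct and is the natural route. For each $\lb\in\D^2$ the Koszul complex of $(T_1-\lb_1,T_2-\lb_2)$ on $\hh$ is exact except at the top degree, where the cohomology is one-dimensional; so that complex is Fredholm. The short exact sequence $0\to M\to\hh\to N\to 0$ of modules induces a short exact sequence of Koszul complexes (your check that $P_N T_i=S_iP_N$ on $\hh$ is exactly the point), hence a six-term long exact sequence in cohomology. Finite-dimensionality of the $M$- and $\hh$-cohomologies forces finite-dimensionality of the $N$-cohomologies, and closed range then follows automatically (a finite-codimensional linear subspace of a closed subspace is closed). So the whole proof reduces, as you say, to localizing Theorem 6.1 --- you wrote ``Theorem 6.2'' in your last paragraph where you meant 6.1 --- to show $(R_1-\lb_1,R_2-\lb_2)$ is Fredholm on $M$ for every $\lb\in\D^2$.

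The obstacle you isolate is genuine but much lighter than you anticipate, because the survey already contains the key ingredient. Proposition 9.1 gives $C^{L_x(M)}=L_xC^ML_x^*$ for every $x\in Aut(\D^2)$; since $L_x$ is bounded invertible (not unitary, but that is irrelevant here), Hilbert--Schmidtness of $C^M$ passes to $C^{L_x(M)}$. By the equivalence recorded after Lemma 8.10, $L_x(M)$ is therefore a Hilbert--Schmidt submodule whenever $M$ is. Now fix $\lb\in\D^2$ and take $\psi\in Aut(\D^2)$ with $\psi(\lb)=0$. The similarity $L_\psi:M'\to M$, where $M'=L_{\psi^{-1}}(M)$, intertwines $(R'_1,R'_2)$ on $M'$ with $(\psi_1(R_1),\psi_2(R_2))$ on $M$, so by the spectral mapping theorem for the Taylor essential spectrum one has $\sigma_e(R'_1,R'_2)=\psi(\sigma_e(R_1,R_2))$. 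Theorem 6.1 applied to the Hilbert--Schmidt submodule $M'$ gives $0\notin\sigma_e(R'_1,R'_2)$, hence $\lb=\psi^{-1}(0)\notin\sigma_e(R_1,R_2)$. No commutator bookkeeping or trace-class corrections are needed; the ``most effort'' step collapses to invoking Proposition 9.1.
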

It was a question whether for every Hilbert-Schmidt submodule $M$ the essential Taylor spectrum $\sigma_e(S_1, S_2)$ is a proper subset of $\partial \D^2$ (cf. \cite{Ya9}). Recall that $Z(M)$ is the set of common zeros of functions in $M$.
Paper \cite{GW1} defines
\[Z'_{\partial}(M)=\{\lb \in \partial \D^2 \mid \text{there exist sequence}\ \lb_n\in Z(M)\ \text{such that}\ \lim_{n\to \infty}\lb_n=\lb\}.\]
Observe that $Z'_{\partial}(M)$ is a subset of $\partial \mathbb D^2$ but $Z_{\partial}(M)$ (defined in Section 2) is a singular measure on $\mathbb T^2$. The support of $Z_{\partial}(M)$ is not equal to $Z'_{\partial}(M)$ either. For example, if $\theta$ is an inner function that has no zero in $\D^2$ and $M=\theta\hh$, then $Z'_{\partial}(M)=\emptyset$ but $Z_{\partial}(M)=-d\sigma_\theta$ has nonempty support (see (5.2)).

\begin{thm}[Guo and P. Wang \cite{GW1}]
For every submodule $M$, the set $Z'_{\partial}(M)\subset \sigma_e(S_1, S_2)$.
\end{thm}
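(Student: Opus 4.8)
The plan is to manufacture, for each $\lb\in Z'_{\partial}(M)$, a singular sequence for the Koszul complex of $(S_1-\lb_1,S_2-\lb_2)$ out of the Szeg\H{o} kernels of $\hh$ sitting at the zeros of $M$. By definition of $Z'_{\partial}(M)$ there are points $\lb^{(n)}\in Z(M)\subset\D^2$ with $\lb^{(n)}\to\lb\in\partial\D^2$. Since every $f\in M$ vanishes at $\lb^{(n)}$, the reproducing kernel $K(\lb^{(n)},\cdot)$ of $\hh$ is orthogonal to $M$ and hence lies in $N$. I would normalize $k_n=K(\lb^{(n)},\cdot)/\|K(\lb^{(n)},\cdot)\|\in N$, so $\|k_n\|=1$. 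Because $\|K(\lb^{(n)},\cdot)\|^2=K(\lb^{(n)},\lb^{(n)})=\big((1-|\lb^{(n)}_1|^2)(1-|\lb^{(n)}_2|^2)\big)^{-1}\to\infty$ as $\lb^{(n)}$ approaches $\partial\D^2$, testing against polynomials (dense in $\hh$, and $\langle p,k_n\rangle=p(\lb^{(n)})/\|K(\lb^{(n)},\cdot)\|\to 0$) shows $k_n\to 0$ weakly.

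Next I would verify that $\{k_n\}$ is an approximate joint eigenvector of $(S_1^*,S_2^*)$ at $(\bar\lb_1,\bar\lb_2)$. One has $T_{z_i}^*K(\lb^{(n)},\cdot)=\bar\lb^{(n)}_i K(\lb^{(n)},\cdot)$ in $\hh$; since $S_i^*=P_NT_{z_i}^*|_N$ and $K(\lb^{(n)},\cdot)\in N$ is already an eigenvector (so $P_N$ does nothing), we get $S_i^*k_n=\bar\lb^{(n)}_i k_n$, and therefore $(S_i^*-\bar\lb_i)k_n=(\bar\lb^{(n)}_i-\bar\lb_i)k_n\to 0$ in norm for $i=1,2$.

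Finally, I would feed this into the Koszul complex $K(S_1-\lb_1,S_2-\lb_2)$ on $N$. The adjoint of its second differential is $d_2^{*}\colon N\to N\oplus N$, $d_2^{*}z=\big((S_1^*-\bar\lb_1)z,(S_2^*-\bar\lb_2)z\big)$, so $\|d_2^{*}k_n\|\to 0$ while $\{k_n\}$ is a weakly null sequence of unit vectors. This is incompatible with Fredholmness of the complex: Fredholmness would force $d_2$ to have closed range with finite-dimensional cokernel $N\ominus d_2(N\oplus N)=\ker d_2^{*}$, hence $d_2^{*}$ bounded below on $(\ker d_2^{*})^{\perp}$; but the component of $k_n$ in the finite-dimensional space $\ker d_2^{*}$ tends to $0$ in norm (weak convergence is norm convergence there), so $d_2^{*}$ would carry the almost-unit vectors $P_{(\ker d_2^{*})^{\perp}}k_n$ to vectors of norm $\to 0$, a contradiction. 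Hence $K(S_1-\lb_1,S_2-\lb_2)$ is not Fredholm, i.e.\ $\lb\in\sigma_e(S_1,S_2)$, which is the claim. Equivalently, the computation shows $\bar\lb$ lies in the joint approximate point spectrum of $(S_1^*,S_2^*)$, so the obstruction actually lives in the top cohomology of the complex.

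I do not anticipate a real obstacle: the argument is soft and, notably, uses no structural hypothesis on $M$ (unlike Theorems 6.2 and 6.3, which assume Hilbert-Schmidtness), which is consistent with the "for every submodule" generality of the statement. The only points needing care are the weak nullity of $\{k_n\}$---which rests on the blow-up of the Szeg\H{o} kernel norm near $\partial\D^2$ together with density of polynomials---and invoking the correct Fredholm criterion for a two-variable Koszul complex, namely that $d_2$ must have closed range and finite-dimensional cokernel.
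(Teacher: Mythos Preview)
The survey does not supply its own proof of this theorem; it merely attributes the result to Guo and P.~Wang \cite{GW1}. So there is nothing to compare against, and the relevant question is whether your argument stands on its own. It does: normalized Szeg\H{o} kernels $k_n$ at the common zeros $\lb^{(n)}\in Z(M)$ lie in $N$, are genuine eigenvectors of each $S_i^{*}$ with eigenvalue $\bar\lb^{(n)}_i$, and converge weakly to $0$ because $\|K(\lb^{(n)},\cdot)\|\to\infty$ as $\lb^{(n)}\to\partial\D^2$. This produces a weakly null unit sequence in $N$ with $\|d_2^{*}k_n\|\to 0$, which is exactly the standard obstruction to Fredholmness at the last stage of the Koszul complex. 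Your handling of that last step (project off the finite-dimensional $\ker d_2^{*}$ and use that $d_2^{*}$ is bounded below on its complement) is correct. This normalized-kernel approach is the natural and expected one for results of this type.

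Two minor remarks. First, a small slip in your closing commentary: the Hilbert--Schmidt hypothesis appears in Theorems 6.1 and 6.2, not 6.2 and 6.3 (Theorem 6.3 is the present statement). Second, you can streamline step 6: since $M$ is $T_{z_i}$-invariant, $N=M^{\perp}$ is $T_{z_i}^{*}$-invariant, so $S_i^{*}=T_{z_i}^{*}|_N$ outright and no projection $P_N$ is needed.
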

This theorem gives rise to the following interesting example.
\begin{exmp}[\cite{GW1}]
Let $\phi(w)=\prod_{n=1}^{\infty}\frac{\alpha_n-w}{1-\overline{\alpha_n}w}$ be an infinite Blaschke product such that
the unit circle $\mathbb T$ is contained in the closure of $Z(\phi)=\{\alpha_n \mid n\geq 1\}$. Set $\Phi(z_1, z_2)=\phi(z_1)\phi(z_2)$ and consider the Beurling-type submodule $M=\Phi\hh$. Then it is not hard to see that 
$Z'_{\partial}(M)=\partial \D^2$. Hence, in view of Theorem 6.2 and 6.3, we have $\sigma_e(S_1, S_2)= \partial \D^2.$
\end{exmp}

There is a result without the assumption of Hilbert-Schmidtness.

\begin{thm}[Lu, R. Yang and Y. Yang \cite{LYY}] Let $N=M^{\perp}$ be a quotient module. If $(S_1, S_2)$ is Fredholm then both $M\ominus (z_1M+z_2M)$ and $\ker S_1\cap\ker S_2$ are finite dimensional and 
\[\operatorname{ind}(S_{1}, S_{2})=\operatorname{dim}\left(M\ominus (z_1M+z_2M)\right)-\operatorname{dim}\left(\ker S_1\cap\ker S_2\right)-1.\]
\end{thm}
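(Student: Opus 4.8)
The plan is to analyze the Koszul complex $K(S_1,S_2)$ on $N$ directly and compute the three cohomology dimensions. Writing $d_1 h = (-S_2 h, S_1 h)$ and $d_2(x,y) = S_1 x + S_2 y$, the three contributions to the index are $\operatorname{dim}\ker d_1$, $\operatorname{dim}(\ker d_2 \ominus \operatorname{ran} d_1)$, and $\operatorname{dim}(N \ominus \operatorname{ran} d_2)$. The first is immediate: $\ker d_1 = \ker S_1 \cap \ker S_2$, so under the Fredholm hypothesis this is finite dimensional and contributes $+\operatorname{dim}(\ker S_1\cap\ker S_2)$ with a plus sign in the index formula. The third term is also tractable: $\operatorname{ran} d_2 = S_1 N + S_2 N$, and one identifies $N \ominus (S_1 N + S_2 N)$ with a concrete space. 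The cleanest route is to use the duality between $N$ and $M$: since $N^\perp = M$ inside $\hh$, and $S_i^* = R_i^{*}|_N$ in the appropriate sense, one shows $N\ominus(S_1 N + S_2 N) \cong \ker S_1^*\cap\ker S_2^*$, and then relates $\ker S_1^*\cap\ker S_2^*$ to the defect space $M\ominus(z_1M+z_2M)$ by the observation that $h\in N$ lies in $\ker S_i^*$ iff $z_i h \perp N$ iff $z_i h \in M$; combined with $h\perp M$ this is exactly the statement that projecting the defect space of $M$ into $N$ gives an isomorphism. This uses nothing beyond the definitions and the fact (implicit in Section 5) that $P_N$ restricted to $M\ominus(z_1M+z_2M)$ behaves well; one should check injectivity and surjectivity of this projection map carefully.

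The real work is the middle term, $\operatorname{dim}(\ker d_2 \ominus \operatorname{ran} d_1)$, and showing it vanishes — equivalently, that the complex is exact in the middle. This is where the structure of $\hh$ as a module over $\mathcal R$ must be exploited, and I expect it to be the main obstacle. The point is that $\hh$ itself has an exact Koszul complex for the pair $(T_1, T_2)$ away from trivial kernel issues (this is essentially the statement that $(z_1,z_2)$ form a regular sequence, or the Taylor spectrum computation for the bidisc shift), and one wants to transfer middle-exactness from $\hh$ down to the quotient $N$. The standard mechanism is a long exact sequence in Koszul cohomology associated to the short exact sequence of modules $0 \to M \to \hh \to N \to 0$: the snake lemma / connecting homomorphism argument gives a six-term (here nine-term) exact sequence relating $H^\bullet(K(S_1,S_2))$ on $N$, $H^\bullet(K(R_1,R_2))$ on $M$, and $H^\bullet(K(T_1,T_2))$ on $\hh$. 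Feeding in Theorem 6.1 (that $(R_1,R_2)$ is Fredholm with index $1$ on a Hilbert–Schmidt submodule) one would get the index additivity $\operatorname{ind}(S_1,S_2) = \operatorname{ind}(T_1,T_2) - \operatorname{ind}(R_1,R_2)$; however, since the present theorem assumes only that $(S_1,S_2)$ is Fredholm and not that $M$ is Hilbert–Schmidt, one cannot invoke Theorem 6.1, and the argument must instead be self-contained.

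Accordingly, the plan is to avoid the homological machinery and argue by hand. First I would show $\operatorname{ran} d_1$ is closed and $\ker d_2$ is closed using the Fredholm hypothesis (each $d_i$ has closed range with finite-dimensional defects), so that all three cohomology spaces are genuine closed subspaces of finite dimension. Then, to prove the middle cohomology is zero, take $(x,y)\in N\oplus N$ with $S_1 x + S_2 y = 0$ and produce $h \in N$ with $S_2 h = -x$, $S_1 h = y$. Lift: let $\tilde x, \tilde y \in \hh$ be the inclusions of $x,y$ into $\hh$; then $z_1\tilde x + z_2\tilde y = z_1 x + z_2 y$, and since $S_i$ is compression, $P_N(z_1 x + z_2 y) = 0$, i.e. $z_1 x + z_2 y \in M$. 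Now use that $M$ is a submodule: one wants $g\in M$ with the right divisibility so that $h := \tilde y/z_1$-type expression, corrected by $g$, lands in $N$. Concretely, since $z_1 x + z_2 y \in M \subset \hh$ and $(z_1,z_2)$ is a regular sequence in $\hh$, there is $h_0\in\hh$ with $z_1 h_0 = y$-component matched and $z_2 h_0 = -x$-component matched modulo $M$; projecting $h_0$ to $N$ and checking that the error terms lie in the range of $d_1$ completes the argument. The delicate point, and the one I would expect to consume the most effort, is verifying that the regular-sequence solution in $\hh$ can be chosen compatibly with $M$ so that its $N$-projection actually works — this is precisely where one needs $M$ to be a submodule (invariant under both $z_1$ and $z_2$) and not merely a closed subspace, and where a careful bookkeeping of which pieces lie in $M$, which in $N$, and which in $z_i M$ is unavoidable. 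Once middle-exactness is established, the index formula drops out by substituting the two computed terms into the definition of $\operatorname{ind}(S_1,S_2)$, with the signs as in the Koszul index: $+\operatorname{dim}\ker d_1$, $-0$, $+\operatorname{dim}(N\ominus\operatorname{ran} d_2)$, and then re-expressing $\operatorname{dim}\ker d_1 = \operatorname{dim}(\ker S_1\cap\ker S_2)$ and $\operatorname{dim}(N\ominus\operatorname{ran} d_2) = \operatorname{dim}(M\ominus(z_1M+z_2M)) - 1$, the $-1$ coming from the one-dimensional overlap contributed by the constant function $P_N 1$ (equivalently, from $\Delta_{S^*}$ having rank one as in formula (5.1)).
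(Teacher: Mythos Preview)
The survey does not include a proof of this theorem (it only cites \cite{LYY}), so there is nothing to compare against directly; however, your proposal contains two substantive errors that would prevent it from going through.

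First, your computation of the top cohomology is wrong because you have confused $\ker S_i^*$ with $\ker S_i$. You write that ``$h\in N$ lies in $\ker S_i^*$ iff $z_i h \perp N$ iff $z_i h \in M$,'' but $z_ih\in M$ is precisely the condition $S_ih=P_N(z_ih)=0$, i.e.\ $h\in\ker S_i$. Since $N$ is $T_i^*$-invariant one has $S_i^*=T_i^*|_N$, so $\ker S_1^*\cap\ker S_2^*=N\cap\ker T_1^*\cap\ker T_2^*=N\cap\mathbb C$, which is at most one-dimensional and has nothing to do with $M\ominus(z_1M+z_2M)$. Your proposed ``isomorphism via $P_N$'' between the defect space of $M$ and the top cohomology of $N$ does not exist.

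Second, and more seriously, the middle cohomology of $K(S_1,S_2)$ does \emph{not} vanish in general. Take $M=z_1\hh+z_2\hh$, so $N=\mathbb C$ and $S_1=S_2=0$: then $\ker d_2=N\oplus N$ while $\operatorname{ran}d_1=\{0\}$, giving $\dim H^1(N)=2$. The correct picture is the one you mention and then abandon: the long exact sequence for $0\to M\to\hh\to N\to 0$, together with $H^0(\hh)=H^1(\hh)=0$ and $H^2(\hh)=\mathbb C$, yields an isomorphism $H^0(N)\cong H^1(M)$ and an exact piece $0\to H^1(N)\to H^2(M)\to\mathbb C\to H^2(N)\to 0$. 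Thus the defect space $M\ominus(z_1M+z_2M)=H^2(M)$ governs the \emph{middle} cohomology of $N$, not the top one. Your worry that this route requires Theorem~6.1 is unfounded: the argument needs only the vanishing of $H^0(\hh)$ and $H^1(\hh)$, and finiteness of the relevant pieces on the $M$ side then follows from the exact sequence and the assumed Fredholmness of $(S_1,S_2)$. Finally, note that with your two erroneous identifications the index you would obtain has the sign of $\dim(\ker S_1\cap\ker S_2)$ wrong relative to the stated formula.
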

Interestingly, it follows from an observation about core operator (cf. (8.1)) that whenever $M$ is Hilbert-Schmidt we have $\operatorname{ind}(S_{1}, S_{2})=0$. 

\section{Essential normality of quotient module}

A bounded linear operator $T$ on a Hilbert space is said to be essentially normal if the commutator $[T^*, T]$ is compact.
A quotient module $N\subset \hh$ is said to be essentially normal (or essentially reductive) if both $ S_1$ and $S_2$ are essentially normal. We have observed in Section 5 that the $C^*$-algebra $C^*(S_1, S_2)$ contains the ideal ${\mathcal K}$ of all compact operators on $N$. Therefore, if $N$ is essentially normal then the quotient algebra $C^*(S_1, S_2)/ {\mathcal K}$ is commutative and it is isomorphic the $C^*$-algebra of continuous functions on the essential Taylor spectrum $\sigma_e(S_1, S_2)$. This fact is neatly expressed in the following short exact sequence:
\[0 {\rightarrow}{\mathcal K} \xrightarrow{i} C^*(S_1, S_2) \xrightarrow{\pi} C(\sigma_e(S_1, S_2)) \xrightarrow{} 0,\]
where $i$ is the inclusion and $\pi$ is the quotient map.
Since the Bergman shift is essentially normal, the quotient module $N=\hh\ominus [z_1-z_2]$ in Example 3.7 is essentially normal. The following generalization holds.
\begin{thm}[Clark \cite{Cl1}; P. Wang \cite{Wa}]
Let $q_1(z_1)$ and $q_2(z_2)$ be two one-variable inner functions. Then the quotient module $\hh\ominus [q_1(z_1)-q_2(z_2)]$ is essentially normal if and only if both $q_1$ and $q_2$ are finite Blaschke products.
\end{thm}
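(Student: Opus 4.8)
The plan is to build a concrete ``variety model'' for $N=\hh\ominus[q_1(z_1)-q_2(z_2)]$ and then to read off essential normality of $S_1$ and $S_2$ from the boundary behaviour of that variety. Write $M=[q_1(z_1)-q_2(z_2)]$. Since $\big(q_1(z_1)-q_2(z_2)\big)\hh\subseteq M$, we have $P_N\big((q_1(z_1)-q_2(z_2))f\big)=0$ for every $f\in N$; together with the elementary identity $P_N\big(p(z_i)f\big)=p(S_i)f$ for $f\in N$ and polynomials $p$ (which passes to bounded analytic $q_i$ by taking strong limits of Taylor means), this yields the operator identity
\[
q_1(S_1)=q_2(S_2)\qquad\text{on }N .
\]
Consequently $N$ is naturally realized, through its reproducing kernel --- the restriction to $V\times V$ of the Szeg\H{o} kernel of $\hh$ --- as a Hilbert space of holomorphic functions on the analytic variety $V=\{(z_1,z_2)\in\D^2:q_1(z_1)=q_2(z_2)\}$, with $S_1$ and $S_2$ acting as multiplication by the coordinate functions. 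This generalizes Example~3.7, where $q_1=q_2=z$, $V=\{z_1=z_2\}\cong\D$, and the model space is the Bergman space.

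\noindent\emph{Sufficiency.} Suppose $q_1$ and $q_2$ are finite Blaschke products. Then $V$ is an algebraic curve and, crucially, a \emph{distinguished} variety: if $z_1\in\T$ then $|q_1(z_1)|=1$, which forces $|q_2(z_2)|=1$ and hence $z_2\in\T$, so $\overline{V}\cap\partial\D^2=V\cap\T^2$ is a finite union of real-analytic arcs. Under the variety model I would then show that $S_1$ differs by a compact operator from one that is normal modulo the compacts --- namely a (block) Toeplitz operator with the continuous, in fact real-analytic, symbol $\varphi=z_1|_{\partial V}$ on the Hardy space of the one-dimensional boundary curve $\partial V\subseteq\T^2$. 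Since $\varphi$ is continuous, the Hankel operators $H_\varphi$ and $H_{\bar\varphi}$ are compact, so $[S_1^*,S_1]$ is compact; the argument for $S_2$ is identical. An alternative, more computational route: localize $N$ over the $z_1$-disc, where $V$ is a $(\deg q_2)$-sheeted branched cover, and exhibit $S_1$ as a compact perturbation of a direct sum of $\deg q_2$ essentially normal Bergman-type weighted shifts.

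\noindent\emph{Necessity.} Argue the contrapositive; say $q_1$ is not a finite Blaschke product, so $q_1$ has infinitely many zeros accumulating at some $\tau\in\T$ and/or a nonconstant singular inner factor, and in particular $q_1$ has no continuous extension across $\tau$. Then the model variety $V$ is no longer distinguished: its closure meets $\partial\D^2$ inside a face $\T\times\D$ or $\D\times\T$ (for instance, letting $z_1$ run through the zeros of $q_1$ towards $\tau$ drives the corresponding sheets of $V$ to a point $(\tau,\beta)$ with $\beta\in\D$). The task is to convert this boundary degeneration into non-compactness of the self-commutator. Using Proposition~3.2 in the form
\[
[S_1^*,S_1]=(I-S_1S_1^*)-(I-S_1^*S_1)=L_N^*(0)L_N(0)-DD^* ,
\]
it suffices to produce a sequence of unit vectors $f_k\in N$ concentrated near $z_1=\tau$ for which $\big\|\big(L_N^*(0)L_N(0)-DD^*\big)f_k\big\|\not\to 0$; the candidates come from the degenerating sheets of $V$ over $z_1$ near $\tau$, whose ``Bergman weights'' fail to decay precisely because the model space $K(q_1)=\hho\ominus q_1\hho$ is infinite dimensional, so that $I-S_1^*S_1$ and $I-S_1S_1^*$ do not cancel there. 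Equivalently, in the Nagy--Foias picture one shows that the left-inner function $L_{M_1}(\lb)$ of $M$ degenerates as $\lb\to\tau$ badly enough that $S_1$ cannot be a compact perturbation of a normal operator.

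The main obstacle is the necessity direction: building and estimating the singular sequence that detects the non-compactness, i.e.\ ruling out an accidental cancellation between $L_N^*(0)L_N(0)$ and $DD^*$ near the bad boundary point $\tau$. On the sufficiency side the only genuine subtlety is the clean identification of $N$ with an $H^2$-type space over $V$ and the compactness of the boundary Hankel operators; once $V$ is known to be distinguished, that reduces to a standard Toeplitz computation.
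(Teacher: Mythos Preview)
The paper does not supply a proof of this theorem; as a survey it only states the result and attributes the sufficiency to Clark \cite{Cl1} and the necessity to P.~Wang \cite{Wa}. There is thus no in-paper argument to compare against, and your proposal must be weighed on its own merits.

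For sufficiency, your outline is in the spirit of Clark's restriction approach. The identity $q_1(S_1)=q_2(S_2)$ is correct, and the observation that $V=\{q_1(z_1)=q_2(z_2)\}$ is a distinguished variety when $q_1,q_2$ are finite Blaschke products is right. But the assertion that $N$ is modeled on $V$ with reproducing kernel equal to the restricted Szeg\H{o} kernel hides a nontrivial step: one needs the kernels $K_\lambda$ for $\lambda\in V$ to span $N$, equivalently that $[q_1(z_1)-q_2(z_2)]$ coincides with the zero-based submodule of $V$. That is a Nullstellensatz-type statement which can fail at singular points of $V$ and which Clark handles with some care. Your subsequent boundary Toeplitz/Hankel step, and the alternate route via a direct sum of Bergman-type weighted shifts, are both plausible endpoints, but neither is actually executed.

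For necessity there is a genuine gap. You correctly locate the geometric obstruction---$\overline V$ meets a face $\T\times\D$ when $q_1$ is not a finite Blaschke product---and you write down the right self-commutator formula from Proposition~3.2, but you stop short of constructing the singular sequence or performing any estimate, and you yourself flag this as ``the main obstacle.'' That construction and estimate are the entire content of the necessity proof; without them you have a plan, not an argument. Wang's proof in \cite{Wa} does carry this through, and it is not a routine verification.
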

This theorem's sufficiency part was proved by D. Clark in \cite{Cl1}, while the necessity is proved only recently by P. Wang in \cite{Wa}. A generalization of this theorem to the Hardy space over the polydisc $H^2(\D^n)$ also hold (\cite{Cl1, Wa}). 

If $M$ is an INS-submodule then $[S_1^*, S_1]=[S^*(q_1), S(q_1)]\otimes I_{K(q_2)}$ and $[S_2^*, S_2]=I_{K(q_1)}\otimes [S^*(q_2), S(q_2)]$. It is well-known that the commutator $[S^*(\theta), S(\theta)]$ is of at most rank $2$ for every one-variable inner function $\theta$. Hence $M^{\perp}$ is essentially normal if and only if both $K(q_1)$ and $K(q_2)$ are finite dimensional, which is the case if and only if both $q_1$ and $q_2$ are finite Blaschke products. Moreover, in this case the commutators $[S_i^*, S_i], i=1, 2$ are both of finite rank. 

Essential normality of Beurling type quotient module $\hh\ominus \theta\hh$ is an intriguing problem. The following theorem gives an elegant characterization.
\begin{thm}[Guo and K. Wang \cite{GWa2}]
Let $\theta\in \hh$ be an inner function. Then $\hh\ominus \theta\hh$ is essentially normal if and only if $\theta$ is a rational inner function of degree at most $(1, 1)$.
\end{thm}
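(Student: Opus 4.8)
The plan is to establish the two implications separately, leaning throughout on the explicit reproducing kernel
\[K^{N}(\lb,\mu)=\frac{1-\theta(\lb)\overline{\theta(\mu)}}{(1-\lb_{1}\bar\mu_{1})(1-\lb_{2}\bar\mu_{2})},\qquad \lb,\mu\in\D^{2},\]
of $N=\hh\ominus M$ with $M=\theta\hh$, together with the fact that, since $M$ is invariant under multiplication by $z_{i}$, the complement $N$ is invariant under $T_{z_{i}}^{*}$, so $S_{i}^{*}=T_{z_{i}}^{*}|_{N}$. A short computation (using that $T_{z_{1}}$ is isometric and $M$ is $T_{z_{1}}$-invariant) then gives the quadratic-form identity
\[\langle[S_{1}^{*},S_{1}]h,h\rangle=\|P_{H^{2}_{z_{2}}}h\|^{2}-\|P_{M}(z_{1}h)\|^{2},\qquad h\in N,\]
together with its mirror image under $z_{1}\leftrightarrow z_{2}$; here $H^{2}_{z_{2}}$ is the subspace of $z_{1}$-independent functions. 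Thus essential normality of $N$ amounts to the statement that both of these sign-indefinite forms are implemented by compact operators on $N$.

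For the \textbf{sufficiency}, recall that, up to a unimodular constant and interchange of the variables, a rational inner function of bidegree at most $(1,1)$ is one of: a constant; a single Blaschke factor $b_{\alpha}(z_{1})$; a product $b_{\alpha}(z_{1})b_{\beta}(z_{2})$ of single Blaschke factors; or a genuinely coupled bidegree-$(1,1)$ rational inner function, which after composing each variable with a disc automorphism takes a simple normal form. In the first two cases $N$ is $\{0\}$ or $K(b_{\alpha})\otimes H^{2}_{z_{2}}$, so (for non-constant $\theta$) one of $S_{1},S_{2}$ is a scalar operator and the other a unilateral shift, and both self-commutators have finite rank. In the decomposable case, splitting $1-b_{\alpha}(\lb_{1})b_{\beta}(\lb_{2})\overline{b_{\alpha}(\mu_{1})b_{\beta}(\mu_{2})}$ into two squares yields the orthogonal decomposition $N=\bigl(K(b_{\alpha})\otimes H^{2}_{z_{2}}\bigr)\oplus\bigl(b_{\alpha}H^{2}_{z_{1}}\otimes K(b_{\beta})\bigr)$, with respect to which each $S_{i}$ is block triangular with one scalar block, one unilateral-shift block, and a finite-rank corner; a finite-rank perturbation of a direct sum of essentially normal operators is essentially normal, and here in fact the self-commutators remain of finite rank. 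In the coupled case a direct computation, in the spirit of Example 3.7 where $\hh\ominus[z_{1}-z_{2}]$ produced the essentially normal Bergman shift, shows $S_{1}$ and $S_{2}$ to be essentially normal; each turns out to be, up to a finite-rank operator, a weighted unilateral shift with convergent weights, which is essentially normal (though, as the Bergman shift illustrates, its self-commutator need not be of finite rank).

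For the \textbf{necessity}, assume $N$ is essentially normal. The first, and in my view principal, task is to show $\theta$ is rational. Let $\hat{k}_{\lb}=K^{N}(\cdot,\lb)/\|K^{N}(\cdot,\lb)\|$ be the normalized reproducing kernel of $N$ at $\lb$. Whenever $\lb$ tends to a point of $\partial\D^{2}$ along which $K^{N}(\lb,\lb)\to\infty$, the vectors $\hat{k}_{\lb}$ go to $0$ weakly, so compactness of $[S_{i}^{*},S_{i}]$ forces the Berezin transforms $\langle[S_{i}^{*},S_{i}]\hat{k}_{\lb},\hat{k}_{\lb}\rangle$ to vanish in the limit. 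Working these transforms out from the kernel formula produces explicit rational expressions in $\theta$, $\partial_{1}\theta$, $\partial_{2}\theta$ and $\lb$, and their vanishing at every boundary point bounds the rate at which $1-|\theta(\lb)|^{2}$ can vanish as $\lb\to\partial\D^{2}$; by the rational-inner-function theory on the bidisc (Agler--McCarthy, Knese) such a bound forces $1-|\theta|^{2}$ to admit a finite Agler decomposition with polynomial entries, i.e.\ forces $\theta=\tilde{p}/p$ to be rational inner. Equivalently one can argue slice-wise: essential normality of $S_{1}$ forces $\theta(\cdot,\zeta)$ to be a finite Blaschke product of uniformly bounded degree for a.e.\ $\zeta\in\T$, and symmetrically for $\theta(\zeta,\cdot)$, and an inner function with uniformly bounded slice degrees is rational. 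This passage from ``self-commutators compact'' to ``$\theta$ rational'' is the step I expect to be the chief obstacle, as it rests on the delicate boundary theory of inner functions on $\D^{2}$.

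Finally, once $\theta=\tilde{p}/p$ is rational, with $p$ of bidegree $(m,n)$ and zero-free on $\D^{2}$, it remains to show $m\le1$ and $n\le1$. Assume $m\ge2$ (the case $n\ge2$ is symmetric). Using the sum-of-squares decomposition of $p\bar{p}-\tilde{p}\,\overline{\tilde{p}}$ along $z_{2}$-slices, one builds unit vectors $h_{k}\in N$ that tend weakly to $0$, are concentrated near a fixed point of $\partial\D\times\D$ and are asymptotically independent of $z_{1}$, while the hypothesis $m\ge2$ supplies a second independent square keeping $\|P_{M}(z_{1}h_{k})\|$ bounded away from $\|P_{H^{2}_{z_{2}}}h_{k}\|$; by the quadratic-form identity above, $\langle[S_{1}^{*},S_{1}]h_{k},h_{k}\rangle\not\to0$, contradicting compactness. (The mechanism is already transparent when $\theta=\theta_{1}(z_{1})$ is a one-variable Blaschke product of degree $m$: then $N=K(\theta_{1})\otimes H^{2}_{z_{2}}$ and $[S_{2}^{*},S_{2}]=I_{K(\theta_{1})}\otimes[T_{z_{2}}^{*},T_{z_{2}}]$ has infinite rank and norm $1$ as soon as $m\ge2$.) Hence $m\le1$, $n\le1$, and together with the sufficiency part this gives the stated equivalence.
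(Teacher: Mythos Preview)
The survey does not prove this theorem; it is quoted from Guo and K.~Wang \cite{GWa2} without argument, so there is no in-paper proof to compare against. I comment on your proposal on its own merits.

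Your architecture (sufficiency by cases, necessity in two stages) is sensible, but there is a concrete error and a genuine gap.

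\textbf{The parenthetical example is miscomputed.} For $\theta=\theta_1(z_1)$ of degree $m$ you correctly have $N=K(\theta_1)\otimes H^2_{z_2}$, but then $[S_2^*,S_2]=I_{K(\theta_1)}\otimes[T_{z_2}^*,T_{z_2}]$ has rank exactly $m$ (identity on an $m$-dimensional space tensored with a rank-one projection), so it is compact for every $m$. The obstruction sits in the other variable: $[S_1^*,S_1]=[S(\theta_1)^*,S(\theta_1)]\otimes I_{H^2_{z_2}}$, which for $m\ge2$ is a nonzero finite-rank operator tensored with the identity on an infinite-dimensional space, hence non-compact. The example is salvageable, but only after swapping the index. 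Relatedly, in the sufficiency paragraph you hedge that in the coupled $(1,1)$ case the self-commutators ``need not be of finite rank''; in fact they are of rank at most one, as the survey itself notes immediately after the theorem (combining it with the Bickel--Liaw result, Theorem~7.3). The Bergman-shift analogy is misleading here: $[z_1-z_2]$ is not a Beurling-type submodule.

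\textbf{The rationality step is not proved.} You say that Berezin-transform asymptotics of $[S_i^*,S_i]$ yield boundary bounds on $1-|\theta|^2$ which, by ``rational-inner-function theory (Agler--McCarthy, Knese)'', force a polynomial Agler decomposition and hence rationality of $\theta$; alternatively you propose a slice-wise degree bound. Both routes are merely asserted. Pointwise control of $1-|\theta|^2$ near $\partial\D^2$ does not by itself produce a polynomial sum-of-squares certificate, and the claim that compactness of $[S_1^*,S_1]$ forces each slice $\theta(\cdot,\zeta)$ to be a finite Blaschke product of uniformly bounded degree is precisely the substantive content here, not a lemma one can quote. You yourself flag this as ``the chief obstacle'', and indeed your write-up does not cross it; as it stands the necessity direction is an outline, not a proof.
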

Here, a rational function $\frac{p(z_1, z_2)}{q(z_1, z_2)}$ is said to be of degree $(m, n)$ if $p$ and $q$ are coprime polynomials with maximal degree $m$ in the variable $z_1$ and $n$ in the variable $z_2$. For instance the rational function $\frac{z_1^3z_2^2-4z_2^3}{1-z_1z_2^4}$ has degree $(3, 4)$. Paper \cite{BL} observed a connection of this problem with Agler's decomposition (\cite{Ag})
\begin{equation*}
1-\overline{\theta(\lb_1, \lb_2)}\theta(z_1, z_2)=(1-\overline{\lb_2}z_2)K_1(\lb, z)+(1-\overline{\lb_1}z_1)K_2(\lb, z),\tag{7.1}
\end{equation*}
where $K_i: \D^2\times \D^2\to \C,\ i=1, 2$ are positive kernels, and obtained the following result.
\begin{thm}[Bickel and Liaw \cite{BL}]
Let $\theta$ be a two-variable inner function. Then on $\hh\ominus \theta\hh$ the commutator $[S_1^*, S_1]$ has rank $n$ if and only if $\theta$ is a rational inner function of degree $(1, n)$ or $(0, n)$.
\end{thm}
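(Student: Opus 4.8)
The plan is to route everything through the Agler decomposition (7.1). Fix a two‑variable inner function $\theta$, write $N=\hh\ominus\theta\hh$, and let $S_1$ be the compression of multiplication by $z_1$ to $N$. Since $\theta\hh$ is in particular a $z_1$‑invariant subspace, Proposition 3.2 applies to $M=\theta\hh$ and gives $I-S_1S_1^{*}=L_N^{*}(0)L_N(0)$ and $I-S_1^{*}S_1=DD^{*}$, so that $[S_1^{*},S_1]=(I-S_1S_1^{*})-(I-S_1^{*}S_1)$ is a difference of two positive operators whose ranks may each be infinite. Equivalently, rearranging (5.1) in the form $I-S_1S_1^{*}=\sum_{k\ge 0}S_2^{k}\phi\otimes S_2^{k}\phi$ (legitimate because $S_2$ is of $C_{\cdot 0}$‑class) with $\phi=P_N1$, the first term has range the $S_2$‑cyclic subspace generated by $\phi$; in general neither piece of the difference is finite rank, and the whole task is to isolate exactly when the difference is, and then to compute its rank.

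The second and main step is to model $(N,S_1,S_2)$ through (7.1). Dividing that identity by $(1-\overline{\lb_1}z_1)(1-\overline{\lb_2}z_2)$ gives
\[
k^{N}_{\lb}(z)=\frac{K_1(\lb,z)}{1-\overline{\lb_1}z_1}+\frac{K_2(\lb,z)}{1-\overline{\lb_2}z_2},\qquad\lb,z\in\D^2,
\]
for the reproducing kernel of $N$. Choosing the Agler decomposition of minimal rank, the positive kernels $K_1,K_2$ arise from a (possibly infinite‑dimensional) unitary transfer‑function realization $\theta(z)=D+CZ(z)(I-AZ(z))^{-1}B$ with state space $\mathcal H_1\oplus\mathcal H_2$ and $Z(z)=\operatorname{diag}(z_1I_{\mathcal H_1},z_2I_{\mathcal H_2})$, for which $\operatorname{rank}K_1=\dim\mathcal H_2$ and $\operatorname{rank}K_2=\dim\mathcal H_1$. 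One then writes out the de Branges--Rovnyak type model of $N$ attached to this realization and reads off $S_1$ and $S_2$ as explicit block operators. What must be proved is the dichotomy: if $\dim\mathcal H_1\le 1$ then $[S_1^{*},S_1]$ has rank exactly $\dim\mathcal H_2$, whereas if $\dim\mathcal H_1\ge 2$ then $[S_1^{*},S_1]$ has infinite rank. The prototype for the second alternative is $\theta=z_1^{2}$: here $N$ consists of the functions $f_0(z_2)+z_1f_1(z_2)$, one has $S_1(f_0,f_1)=(0,f_0)$, and $[S_1^{*},S_1]=\operatorname{diag}(I,-I)$ on $H^2_{z_2}\oplus H^2_{z_2}$, so the positive and negative parts of the difference live on complementary infinite‑dimensional subspaces with no cancellation. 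I expect this dichotomy, and in particular the sharp threshold $\dim\mathcal H_1\le 1$, to be the principal obstacle, for it is precisely here that the coordinate asymmetry (it is the self‑commutator of $S_1$, not of $S_2$, that is in question) forces the $z_1$‑side of the realization to be at most one‑dimensional.

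Finally I would invoke the dictionary between finite unitary realizations and rational inner functions on $\D^2$: $\theta$ is a rational inner function of degree $(m,n)$ if and only if it admits a minimal unitary realization of the above type with $\dim\mathcal H_1=m$ and $\dim\mathcal H_2=n$ (Kummert; see also Cole--Wermer, Ball--Sadosky--Vinnikov, Knese), so finiteness of $\dim\mathcal H_1$ and $\dim\mathcal H_2$ together is equivalent to rationality of $\theta$. Combining this with the dichotomy of the previous step, $[S_1^{*},S_1]$ has rank $n<\infty$ exactly when $\dim\mathcal H_1\le 1$ and $\dim\mathcal H_2=n$, that is, exactly when $\theta$ is a rational inner function of degree $(1,n)$ or $(0,n)$. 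An alternative to the model computation in the second step is to evaluate $\langle[S_1^{*},S_1]k^{N}_{\lb},k^{N}_{\mu}\rangle$ directly from the displayed kernel identity and extract the surviving $K_1$‑contribution, but controlling the cancellation between the $I-S_1S_1^{*}$ and $I-S_1^{*}S_1$ terms by hand appears no easier than building the model.
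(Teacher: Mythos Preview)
The paper is a survey and does not supply a proof of this theorem; it is quoted from \cite{BL} and immediately followed only by the remark linking it to Theorem~7.2. So there is no ``paper's own proof'' to compare against.

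As for your proposal itself: the route you sketch---pass through the Agler decomposition (7.1), build a unitary transfer-function realization $\theta(z)=D+CZ(z)(I-AZ(z))^{-1}B$ with state space $\mathcal H_1\oplus\mathcal H_2$, and use the dictionary $(\dim\mathcal H_1,\dim\mathcal H_2)\leftrightarrow$ degree $(m,n)$ for rational inner $\theta$---is indeed the framework of the Bickel--Liaw paper, and your preliminary identities (the telescoping of (5.1) into $I-S_1S_1^{*}=\sum_{k\ge 0}S_2^{k}\phi\otimes S_2^{k}\phi$, and the kernel identity obtained by dividing (7.1) by the Szeg\H{o} kernel) are correct and useful.

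However, what you have written is a plan, not a proof: you explicitly flag the central dichotomy (``$\dim\mathcal H_1\le 1\Rightarrow\operatorname{rank}[S_1^{*},S_1]=\dim\mathcal H_2$; $\dim\mathcal H_1\ge 2\Rightarrow$ infinite rank'') as ``the principal obstacle'' and do not establish it. Two specific gaps remain. First, you need to justify that a \emph{minimal} realization exists with $\dim\mathcal H_i$ equal to the degree in $z_i$; minimality in the two-variable setting is subtler than in one variable and is part of what \cite{BL} (via Knese and Kummert) has to supply. Second, and more seriously, the forward implication requires an actual computation: in the de Branges--Rovnyak model one must exhibit $[S_1^{*},S_1]$ concretely and see the cancellation that collapses it to rank $\dim\mathcal H_2$ precisely when $\dim\mathcal H_1\le 1$. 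Your $\theta=z_1^{2}$ example correctly shows what goes wrong when $\dim\mathcal H_1\ge 2$, but the positive case is where the work lies, and it is not done here.
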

Observe that Theorem 7.2 and 7.3 together indicate that when $\hh\ominus \theta\hh$ is essentially normal the two commutators $[S_i^*, S_i], i=1, 2$ are at most rank one. The kernel function $K_i$ in Agler's decomposition (7.1) naturally gives rise to a $S_i$-invariant subspaces, $i=1, 2$. A detailed study about the two spaces is made in Bickel and Gorkin \cite{BG}. It is worth noting that in the polydisc case $n\geq 3$, Das, Gorai and Sarkar \cite{DGS} observed that Beurling type quotient modules $H^2(\D^n)\ominus \theta H^2(\D^n)$ is never essentially normal. 

For submodules generated by a homogeneous polynomial, the essential normality of $N$ is studied in the papers \cite{GW1, GW2, WZ1, WZ2}. Every homogeneous polynomial $p(z_1, z_2)$ has the decomposition $p=p_1p_2$, where 
\[p_1(z_1, z_2)=\prod_{|\alpha_i|=|\beta_i|}(\alpha_iz_1-\beta_iz_2)\ \ \text{and}\ \ p_2(z_1, z_2)=\prod_{|\alpha_i|\neq |\beta_i|}(\alpha_iz_1-\beta_iz_2).\]
Clearly, the polynomial $p_1$ has the property that $Z(p_1)\cap \partial \D^2\subset {\mathbb T}^2$. A polynomial with this property is sometimes called a polynomial with distinguished variety (\cite{AM2}). The following complete characterizaion of essential normality for homogeneous quotient modules is obtained.
\begin{thm}[Guo and P. Wang \cite{GW1}]
Let $p$ be a two variable homogeneous polynomial with decomposition $p=p_1p_2$ as above. Then $N=[p]^{\perp}$ is essentially normal if and only if $p_2$ has one of the following forms:

(1) $p_2$ is a nonzero constant;

(2) $p_2=\alpha z_1-\beta z_2$ with $|\alpha|\neq |\beta|$;

(3) $p_2=c(z_1-\alpha z_2)(z_2-\beta z_1)$ with $|\alpha|<1$ and $|\beta|<1$.
\end{thm}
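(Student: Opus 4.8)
The plan is to compute the self-commutators $[S_1^{*},S_1]$ and $[S_2^{*},S_2]$ on $N=[p]^{\perp}$ directly, exploiting the $\mathbb Z_{\geq0}$-grading that $N$ inherits from the homogeneity of $p$. Write $N=\bigoplus_{n\geq0}N_n$ with $N_n=N\cap\mathcal P_n$, where $\mathcal P_n$ is the space of homogeneous polynomials of degree $n$; then $N_n=\mathcal P_n\ominus p\,\mathcal P_{n-d}$ ($d=\deg p$), so $\dim N_n=d$ for $n\geq d$ and $S_i(N_n)\subset N_{n+1}$. Factor $p$ into linear forms and let $w_1,\dots,w_d\in\C\cup\{\infty\}$ be its slopes, i.e.\ the roots of the dehomogenization $\tilde p(w)=p(w,1)$, with a slope $\infty$ recorded for each factor $z_2$. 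The decomposition $p=p_1p_2$ of the statement exactly separates the slopes with $|w_i|=1$ (forming $p_1$) from those with $|w_i|\neq1$ (forming $p_2$); among the latter call $w_i$ \emph{inside} if $|w_i|<1$ and \emph{outside} if $|w_i|>1$ or $w_i=\infty$. Since $[p]$ is a Hilbert--Schmidt submodule (cf.\ the discussion preceding Theorem 4.5), Theorem 4.6(b) makes $[S_1^{*},S_2]$ automatically compact, so essential normality of $N$ is governed solely by the two self-commutators. For an inside slope one has the explicit vector $e_n^{(i)}=\sum_{k=0}^n\bar w_i^{\,k}z_1^kz_2^{n-k}\in N_n$ (the analogue of the vectors in Example 3.7), of bounded norm; for an outside slope one uses the reflected vector $\sum_k\bar w_i^{-k}z_1^{n-k}z_2^{k}$; and for a slope on $\T$ one uses $e_n^{(i)}$ renormalized by $(n+1)^{-1/2}$. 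Repeated slopes force the usual confluent (derivative) variants.

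The core of the proof is an asymptotic analysis of the Gram matrices $G_n$ of these bases and of the matrix weights $W_n^{(1)},W_n^{(2)}$ representing $S_1,S_2$ from $N_n$ to $N_{n+1}$. First I would show that, after the stated normalizations, $G_n$ converges to the block-diagonal matrix whose inside block is the Szeg\H{o} matrix $\bigl(\tfrac1{1-\bar w_iw_j}\bigr)$, whose outside block is the analogous matrix in the variables $1/w_i$, and whose boundary block is the identity; the key point is that the cross-Gram entries between distinct blocks tend to $0$, geometrically between inside and outside slopes and like $O(n^{-1/2})$ between a finite-slope block and the boundary block. Thus the three blocks decouple asymptotically.

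Next, from $z_1e_n^{(i)}=\bar w_i^{-1}\bigl(e_{n+1}^{(i)}-z_2^{n+1}\bigr)$ together with the projection equation $G_{n+1}d=\mathbf 1$ computing $P_N(z_2^{n+1})$, one finds that $W_n^{(1)}$ converges to a block-diagonal limit whose boundary block is the diagonal unitary $\operatorname{diag}(w_\ell)$, whose outside block is the identity, and whose inside block is the fixed matrix $M_{\mathrm{in}}=(I-d_\infty\mathbf 1^{T})\operatorname{diag}(\bar w_i^{-1})$ with $d_\infty=G_\infty^{-1}\mathbf 1$ (in the orthonormal picture conjugated by $G_\infty^{1/2}$); symmetrically $W_n^{(2)}$ has boundary block unitary, inside block the identity, and outside block $M_{\mathrm{out}}$ built from the outside slopes. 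Since $[S_i^{*},S_i]$ is block-diagonal along the grading with $n$-th block $W_n^{(i)*}W_n^{(i)}-W_{n-1}^{(i)}W_{n-1}^{(i)*}\to[M_\infty^{(i)*},M_\infty^{(i)}]$ up to a null sequence, $S_i$ is essentially normal iff $M_\infty^{(i)}$ is normal; the unitary boundary block and the identity block contribute nothing, so $S_1$ is essentially normal $\iff M_{\mathrm{in}}$ is normal and $S_2$ is essentially normal $\iff M_{\mathrm{out}}$ is normal. A direct computation with the Szeg\H{o} matrix then shows that $M_{\mathrm{in}}$ is normal precisely when it is $1\times1$ (in which case it is the scalar $\bar w_1^{-1}\bigl(1-(1-|w_1|^2)\bigr)=w_1$, consistent with $\|S_1\|\leq1$), a second slope producing a genuine off-diagonal part and, e.g., a strict Schur inequality $\operatorname{tr}(M_{\mathrm{in}}^{*}M_{\mathrm{in}})>\sum_j|\lambda_j|^2$. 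Hence $N$ is essentially normal iff $p$ has at most one inside slope and at most one outside slope, which is exactly the trichotomy: $p_2$ a nonzero constant, a single $\alpha z_1-\beta z_2$ with $|\alpha|\neq|\beta|$, or $c(z_1-\alpha z_2)(z_2-\beta z_1)$ with $|\alpha|,|\beta|<1$ (the slope $\alpha$ inside and the slope $1/\beta$ outside).

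The step I expect to be the main obstacle is the quantitative bookkeeping in the two asymptotic analyses: one must make the convergences $G_n\to G_\infty$ and $W_n^{(i)}\to W_\infty^{(i)}$ rapid enough that the blocks of $[S_i^{*},S_i]$ genuinely converge in norm (so that compactness is equivalent to the limiting block being zero), and one must carry the confluent-basis modifications for repeated slopes through every computation — in particular verifying that a slope on $\T$ of multiplicity $\geq2$ still yields an asymptotically diagonal, hence harmless, boundary block (a weighted-shift estimate in the spirit of the Bergman-shift computation behind Example 3.7), whereas a repeated inside slope does not. The accompanying linear-algebra fact — that the Szeg\H{o}-conjugated matrix $M_{\mathrm{in}}$ is normal only in the $1\times1$ case — is elementary but must be carried out with care.
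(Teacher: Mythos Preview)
The survey itself does not prove Theorem 7.4; it merely quotes the result from Guo and P.\ Wang \cite{GW1}, so there is no in-paper argument to compare your proposal against. That said, your graded weighted-shift strategy is the natural one for homogeneous quotient modules and is, in outline, the approach taken in the original source: decompose $N=\bigoplus_{n\ge0}N_n$ with $\dim N_n=d$ for $n\ge d$, realize $[S_i^*,S_i]$ as the block-diagonal operator with $n$-th block $\tilde W_n^{(i)*}\tilde W_n^{(i)}-\tilde W_{n-1}^{(i)}\tilde W_{n-1}^{(i)*}$, and reduce compactness to normality of the limiting weight. Your three-block (inside/boundary/outside) asymptotic picture is correct, and the verification that the boundary block of $\tilde W_\infty^{(1)}$ is unitary and the outside block is the identity goes through as you describe.

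There is, however, one genuine gap. The sentence ``a direct computation with the Szeg\H{o} matrix then shows that $M_{\mathrm{in}}$ is normal precisely when it is $1\times1$'' is the entire content of the necessity direction, and the Schur-inequality remark you offer is not a proof. A clean way to close this is to observe that your limiting inside block is \emph{exactly} the compressed shift on a model space: under the identification $e_n^{(i)}\leftrightarrow k_{w_i}\in H^2(\D)$ (the Gram matrices agree, and $\langle z_2^{\,n+1},e_{n+1}^{(i)}\rangle=1=\langle 1,k_{w_i}\rangle$), the formula $S(B)k_{w_i}=\bar w_i^{-1}\bigl(k_{w_i}-P_{K(B)}1\bigr)$ with $P_{K(B)}1=\sum_j(G^{-1}\mathbf 1)_j k_{w_j}$ shows that $\tilde M_{\mathrm{in}}$ is unitarily equivalent to the Jordan block $S(B)$, where $B$ is the Blaschke product with zeros at the inside slopes. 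Since the eigenvectors $k_{w_i}$ of $S(B)^*$ are never orthogonal and a repeated zero forces a genuine Jordan block, $S(B)$ is normal only when $\deg B\le1$. By the $z_1\leftrightarrow z_2$ symmetry the outside block of $\tilde W_\infty^{(2)}$ is $S(B')$ with $B'$ the Blaschke product at the reciprocals $1/w_j$, and the same conclusion holds. This simultaneously handles repeated inside/outside slopes without the confluent-basis bookkeeping you flag as an obstacle; the confluent analysis is still needed, but only for the (easier) boundary block, where the limiting weight is diagonal unitary regardless of multiplicity.
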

In particular, Theorem 7.4 indicates that if $p$ has distinguished variety then $p_2$ is a constant and hence $[p]^{\perp}$ is essentially normal. This connection between homogeneous polynomial with distinguished variety and essential normality remains valid for the Hardy space over the polydisc $H^2(\D^n)$ (\cite{WZ1, WZ2, WZ3}). These facts motivate the following question.

\vspace{3mm}

\noindent {\bf Problem 7}. Let $p(z_1, z_2)$ be any polynomial with distinguished variety. Then is $[p]^{\perp}$ essentially normal?

\vspace{3mm}
Essential normality is one of the most important topics in multivariable operator theory. For more information related to bidisc we refer readers to \cite{DM1, Do3, GWZ}. Other related studies can be found in \cite{DW1, GWa1}.

\section{Two single companion operators}

The two-variable nature of submodule $M$ and the associated pairs $(R_1, R_2)$ and $(S_1, S_2)$ presents a challenge for our study. A fruitful idea is to find some single operators that are tightly related to the submodule and the pairs so that classical one-variable techniques can assist more substantially in their studies. Two such operators, namely fringe operator and core operator, have been defined (\cite{GY, Ya4}) and well-studied in recent years.

\subsection{Fringe operator}

Fringe operator is defined on the defect space $M_1=M\ominus z_1M$. Parallelly, it can be defined on $M_2=M\ominus z_2M$ as well, but we shall not need it for the survey here.
\begin{definition}[\cite{Ya4}]
Given a submodule $M$, the associated fringe operator $F$ is the compression of the operator $R_2$ to the space $M_1$. More precisely,
\begin{align*}
F f=P_{M_1} z_2f,  \quad \quad f \in M_1.
\end{align*} 
\end{definition}
One observes carefully that the definition of fringe operator relies on $R_1$ as well as $R_2$. Hence $F$ is indeed a single operator but with a two variable nature. The following results summarize some elementary properties of fringe operator.
\begin{prop}
Let $F$ be the fringe operator associated with submodule $M$. Then
 \noindent \begin{itemize} 
 \item[(a)]\quad$range(F)=(z_1M+z_2M) \ominus z_1M$,
 \item[(b)]\quad$ker F^*=M \ominus(z_1M+z_2M)$,
 \item[(c)]\quad$ker F=z_1(ker S_{1} \cap ker S_{2})$,
 \item[(d)]\quad$\sigma (F)=\overline{\mathbb D}$.
\end{itemize}
\end{prop}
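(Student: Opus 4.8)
The plan is to read (a), (b), (c) directly off the definition $Ff=P_{M_1}(z_2f)$ together with the orthogonal decomposition $M=M_1\oplus z_1M$ and the fact that $T_{z_1}$ is an isometry with trivial kernel; part (d) needs a separate, still elementary, argument.

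For (a), writing $Ff=z_2f-P_{z_1M}(z_2f)$ for $f\in M_1$ shows at once that $\mathrm{range}(F)\subseteq z_1M+z_2M$ and that it is contained in $M_1$, hence in $(z_1M+z_2M)\ominus z_1M$ (read as the set of vectors of $z_1M+z_2M$ orthogonal to $z_1M$, since that sum need not be closed); conversely, any $v=z_1m_1+z_2m_2$ with $v\perp z_1M$ satisfies $v=P_{M_1}v=P_{M_1}(z_2m_2)=F(P_{M_1}m_2)$, because $P_{M_1}(z_1M)=0$. For (b), I would first identify $F^{*}=P_{M_1}T_{z_2}^{*}|_{M_1}$, so that $\ker F^{*}=\{f\in M_1:\ f\perp z_2M_1\}$; then, using $M=M_1\oplus z_1M$ and $z_1z_2M\subseteq z_1M\perp M_1$, one checks that for $f\in M_1$ the condition $f\perp z_2M_1$ is equivalent to $f\perp z_2M$, so $\ker F^{*}=\{f\in M:\ f\perp z_1M,\ f\perp z_2M\}=M\ominus(z_1M+z_2M)$. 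For (c), $Ff=0$ says $z_2f\in z_1M$; restricting the function identity $z_2f=z_1m$ to the slice $\{z_1=0\}$ forces $f(0,z_2)\equiv0$, so $f=z_1h$ with $h\in\hh$, and then injectivity of $T_{z_1}$ gives $z_2h=m\in M$, while $z_1h=f\in M$, and since $z_1h\perp z_1M$ is equivalent to $h\perp M$ we obtain $h\in N$. Noting that $\ker S_i=\{h\in N:\ z_ih\in M\}$, this says exactly $f=z_1h$ with $h\in\ker S_1\cap\ker S_2$; the converse of (c) is the same computation reversed.

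For (d), $F$ is a compression of the contraction $R_2$, so $\|F\|\le1$ and $\sigma(F)\subseteq\overline{\mathbb D}$. For the opposite inclusion, fix $\lb\in\mathbb D$; I claim $F-\lb$ is not surjective. As in (a), $\mathrm{range}(F-\lb)=P_{M_1}\bigl(z_1M+(z_2-\lb)M\bigr)$, and since $z_1M\subseteq z_1M+(z_2-\lb)M$, the decomposition $M=M_1\oplus z_1M$ shows that if $\mathrm{range}(F-\lb)=M_1$ then the honest equality $z_1M+(z_2-\lb)M=M$ must hold. Iterating that equality gives $M=\sum_{j+k=n}z_1^{j}(z_2-\lb)^{k}M$ for every $n\ge1$; but each generator $z_1^{j}(z_2-\lb)^{k}$ vanishes to order $j+k=n$ at the interior point $(0,\lb)\in\mathbb D^2$, so every $f\in M$ would vanish to infinite order there and hence, being holomorphic on the connected domain $\mathbb D^2$, be identically zero. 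As $M\neq\{0\}$ this is absurd, so $F-\lb$ is not invertible, $\mathbb D\subseteq\sigma(F)$, and closedness of the spectrum gives $\overline{\mathbb D}\subseteq\sigma(F)$, hence equality.

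The identities in (a)--(c) are routine once $F^{*}$ is computed; the one point needing care is in (d), namely the passage from mere surjectivity of $F-\lb$ to the algebraic identity $z_1M+(z_2-\lb)M=M$, since a sum of two closed submodules need not be closed — that is exactly where $M=M_1\oplus z_1M$ and $z_1M\subseteq z_1M+(z_2-\lb)M$ are used. I expect the short vanishing-order argument to be the cleanest part, and the main temptation to resist is invoking Lemma 3.4 or a Fredholm-index computation, which would also prove (d) but with far more machinery than the statement needs.
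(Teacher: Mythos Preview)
The survey paper does not include a proof of this proposition; it is stated as an elementary summary and attributed to \cite{Ya4}. So there is no argument in the paper to compare against directly. That said, your proof is correct in all four parts.

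Parts (a)--(c) are exactly the routine unwindings one expects, and your handling of the parenthetical issue in (a) (reading $(z_1M+z_2M)\ominus z_1M$ as the set of vectors in the algebraic sum orthogonal to $z_1M$, since closedness of $z_1M+z_2M$ is precisely the open Problem~8) is the right way to read the statement.

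For (d), your route is genuinely self-contained: the reduction from surjectivity of $F-\lb$ on $M_1$ to the honest algebraic identity $M=z_1M+(z_2-\lb)M$ via $M=M_1\oplus z_1M$ is the key step, and the iteration-plus-infinite-order-vanishing argument at $(0,\lb)$ finishes cleanly. This is more elementary and more general than the alternative you allude to: the Fredholm machinery developed just after this proposition (Corollary~8.3 together with Theorem~6.1) would give $\operatorname{ind}(F-\lb)=-1$ and hence non-invertibility, but only under the Hilbert--Schmidt hypothesis, whereas (d) is asserted for \emph{every} submodule. Your argument needs only $M\neq\{0\}$, which is implicit throughout.
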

\begin{corr}
If $M$ is a submodule in $\hh$ then the fringe operator $F$ on $M_1$ is Fredholm if and only if the tuple $(R_1, R_2)$ on $M$ is Fredholm. Moreover, in this case we have $ind(F)=-ind(R_1, R_2)$.
\end{corr}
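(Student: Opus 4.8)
The plan is to read off the Fredholm data of the Koszul complex $K(R_1,R_2)$ on $M$ from Proposition 8.2, term by term. Recall that $d_1x=(-R_2x,R_1x)$ and $d_2(x,y)=R_1x+R_2y$ for $x,y\in M$. First I would observe that $R_1=T_1|_M$ and $R_2=T_2|_M$ are isometries, so $\ker d_1=\ker R_1\cap\ker R_2=\{0\}$ and $\|d_1x\|^2=\|R_1x\|^2+\|R_2x\|^2=2\|x\|^2$; hence $d_1$ is bounded below and $d_1(M)$ is automatically closed. Consequently the only closed range requirement in the Fredholmness of $K(R_1,R_2)$ falls on $d_2$, whose range is $R_1M+R_2M=z_1M+z_2M$. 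By Proposition 8.2(a) this is closed exactly when $\operatorname{ran}F$ is closed, and its orthogonal complement in $M$ is $M\ominus(z_1M+z_2M)=\ker F^*$ by Proposition 8.2(b).

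The key step will be to identify the middle term $\ker d_2\ominus d_1(M)$ with $\ker S_1\cap\ker S_2$. If $(x,y)\in\ker d_2$, then $u:=z_1x=-z_2y$ is a function in $M$ with $u/z_1=x\in\hh$ and $u/z_2=-y\in\hh$, so $u$ is divisible in $\hh$ by $z_1z_2$; thus there is a unique $g\in\hh$ with $x=z_2g$, $y=-z_1g$, and the pair $(x,y)=(z_2g,-z_1g)$ lies in $d_1(M)$ if and only if $g\in M$. Since $z_1g=x\in M$ and $z_2g=-y\in M$, the element $h:=P_Ng$ satisfies $S_1h=P_N(z_1g)=0$ and $S_2h=P_N(z_2g)=0$, so $h\in\ker S_1\cap\ker S_2$; conversely every $h\in\ker S_1\cap\ker S_2$ has $z_1h,z_2h\in M$ and is recovered by this recipe with $g=h$. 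This yields a linear isomorphism $\ker d_2/d_1(M)\cong\ker S_1\cap\ker S_2$, and since $d_1(M)$ is closed the dimensions of $\ker d_2\ominus d_1(M)$ and $\ker S_1\cap\ker S_2$ coincide. Finally, multiplication by $z_1$ is isometric, so $\dim\ker F=\dim(\ker S_1\cap\ker S_2)$ by Proposition 8.2(c).

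Assembling these facts, $K(R_1,R_2)$ is Fredholm precisely when $F$ has closed range and both $\ker S_1\cap\ker S_2$ and $M\ominus(z_1M+z_2M)$ are finite dimensional, which is exactly the statement that $F$ is Fredholm. For the index, I would substitute into the definitions: $\operatorname{ind}(R_1,R_2)=\dim\ker d_1-\dim\bigl(\ker d_2\ominus d_1(M)\bigr)+\dim\bigl(M\ominus(z_1M+z_2M)\bigr)=-\dim(\ker S_1\cap\ker S_2)+\dim\ker F^*$, whereas $\operatorname{ind}(F)=\dim\ker F-\dim\ker F^*=\dim(\ker S_1\cap\ker S_2)-\dim\ker F^*$, giving $\operatorname{ind}(F)=-\operatorname{ind}(R_1,R_2)$.

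The step I expect to be the main obstacle is the middle cohomology identification in the second paragraph: one must check carefully that the assignment $(x,y)\mapsto P_Ng$ is well defined on $\ker d_2$, descends to an isomorphism modulo $d_1(M)$, and has image exactly $\ker S_1\cap\ker S_2$. The crux is that $z_1x=-z_2y$ is divisible by $z_1z_2$ inside $\hh$, and that $z_1g,z_2g\in M$ forces $P_Ng$ into the joint kernel while not forcing $g\in M$ — this is precisely what makes the term nonzero in general. The bookkeeping relating closed range of $d_2$ to closed range of $F$ through Proposition 8.2(a) also requires a short verification but is otherwise routine.
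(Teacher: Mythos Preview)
Your argument is correct and is precisely the intended derivation: the paper states Corollary 8.3 without proof, as a direct consequence of Proposition 8.2, and your term-by-term identification of the Koszul data $(\ker d_1,\ \ker d_2\ominus d_1(M),\ M\ominus\operatorname{ran}d_2)$ with $(\{0\},\ \ker S_1\cap\ker S_2,\ \ker F^*)$ via parts (a)--(c) is exactly how that corollary is meant to be read off. The one point worth making explicit is the orthogonal decomposition $z_1M+z_2M=z_1M\oplus\operatorname{ran}F$ (which underlies both the closed-range equivalence and Proposition 8.2(a) itself); you flagged this as routine bookkeeping, and it is, but writing it down removes any ambiguity about what $(z_1M+z_2M)\ominus z_1M$ means when $z_1M+z_2M$ is not assumed closed.
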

\begin{prop}
Let $M$ be a submodule.  Then on $M \ominus z_1M$ we have 
 \begin{itemize} 
 \item[(a)]\quad$I-F^*F=[R_2{^*},R_1][R_1{^*},R_2]$,
 \item[(b)]\quad$I-FF^*=[R_1{^*},R_1][R_2{^*},R_2][R_1{^*},R_1]$.
\end{itemize}
\end{prop}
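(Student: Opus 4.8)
The plan is to compute $F^{*}F$ and $FF^{*}$ directly from the definition, using only that $R_{1}$ and $R_{2}$ are commuting isometries on $M$. Thus $R_{i}^{*}R_{i}=I$, and taking adjoints in $[R_{1},R_{2}]=0$ gives $[R_{1}^{*},R_{2}^{*}]=0$; moreover $[R_{i}^{*},R_{i}]=I-R_{i}R_{i}^{*}$ is the orthogonal projection of $M$ onto $M_{i}=M\ominus z_{i}M$, which I will denote $P_{i}$, so the right-hand sides of (a) and (b) are $[R_{1}^{*},R_{2}]^{*}[R_{1}^{*},R_{2}]$ and $P_{1}P_{2}P_{1}$. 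First I would record the adjoint of $F$: for $f,g\in M_{1}$ one has $\langle Ff,g\rangle=\langle P_{M_{1}}R_{2}f,g\rangle=\langle R_{2}f,g\rangle=\langle f,R_{2}^{*}g\rangle$, so $F^{*}g=P_{1}R_{2}^{*}g$, i.e. $F=P_{1}R_{2}|_{M_{1}}$ and $F^{*}=P_{1}R_{2}^{*}|_{M_{1}}$. The key simplification is that the projection in $F^{*}$ is invisible on $M_{1}$: if $f\in M_{1}$ then $R_{1}^{*}f=0$, hence $R_{1}^{*}R_{2}^{*}f=R_{2}^{*}R_{1}^{*}f=0$, so $F^{*}f=(I-R_{1}R_{1}^{*})R_{2}^{*}f=R_{2}^{*}f$ (in particular $R_{2}^{*}f\in M_{1}$).

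Part (b) then follows in two lines; I would do it first, as it sets up the method. For $f\in M_{1}$,
\[
FF^{*}f=P_{1}R_{2}(R_{2}^{*}f)=P_{1}R_{2}R_{2}^{*}f,
\]
so $(I-FF^{*})f=P_{1}f-P_{1}R_{2}R_{2}^{*}f=P_{1}(I-R_{2}R_{2}^{*})f=P_{1}P_{2}f=P_{1}P_{2}P_{1}f$, the last step since $P_{1}f=f$. As $P_{1}P_{2}P_{1}$ maps $M$ into $M_{1}$ and kills $z_{1}M$, this is exactly the asserted identity $I-FF^{*}=[R_{1}^{*},R_{1}][R_{2}^{*},R_{2}][R_{1}^{*},R_{1}]$ read on $M\ominus z_{1}M$.

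For part (a) I would expand both sides on $f\in M_{1}$. On the left, using $R_{2}^{*}R_{2}=I$ and $P_{1}f=f$,
\[
F^{*}Ff=P_{1}R_{2}^{*}(P_{1}R_{2}f)=P_{1}\big(f-R_{2}^{*}R_{1}R_{1}^{*}R_{2}f\big)=f-P_{1}R_{2}^{*}R_{1}R_{1}^{*}R_{2}f,
\]
so $(I-F^{*}F)f=P_{1}R_{2}^{*}R_{1}R_{1}^{*}R_{2}f$. On the right, $R_{1}^{*}f=0$ gives $[R_{1}^{*},R_{2}]f=R_{1}^{*}R_{2}f$, hence
\[
[R_{2}^{*},R_{1}][R_{1}^{*},R_{2}]f=R_{2}^{*}R_{1}R_{1}^{*}R_{2}f-R_{1}R_{2}^{*}R_{1}^{*}R_{2}f.
\]
The remaining step is to see that the last term equals $R_{1}R_{1}^{*}\big(R_{2}^{*}R_{1}R_{1}^{*}R_{2}f\big)$: by $R_{1}^{*}R_{2}^{*}=R_{2}^{*}R_{1}^{*}$ and $R_{1}^{*}R_{1}=I$ we get $R_{1}^{*}\big(R_{2}^{*}R_{1}R_{1}^{*}R_{2}f\big)=R_{2}^{*}R_{1}^{*}R_{1}R_{1}^{*}R_{2}f=R_{2}^{*}R_{1}^{*}R_{2}f$, and applying $R_{1}$ yields $R_{1}R_{1}^{*}\big(R_{2}^{*}R_{1}R_{1}^{*}R_{2}f\big)=R_{1}R_{2}^{*}R_{1}^{*}R_{2}f$. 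Therefore
\[
[R_{2}^{*},R_{1}][R_{1}^{*},R_{2}]f=(I-R_{1}R_{1}^{*})R_{2}^{*}R_{1}R_{1}^{*}R_{2}f=P_{1}R_{2}^{*}R_{1}R_{1}^{*}R_{2}f=(I-F^{*}F)f,
\]
and, as in (b), this operator vanishes on $z_{1}M$, so the identity holds on $M\ominus z_{1}M$.

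I do not expect a genuine obstacle: once $[R_{i}^{*},R_{i}]=P_{i}$ and the formula for $F^{*}$ are in place, both computations are a few lines. The one point that must be handled with care — and explicitly stated — is the asymmetry between $[R_{1}^{*},R_{2}^{*}]=0$, which does hold and may be used freely to commute an adjoint past an adjoint, and $[R_{1}^{*},R_{2}]$, which is precisely the quantity being measured and must \emph{not} be commuted. A secondary, purely bookkeeping remark is that the right-hand sides of (a) and (b) are a priori operators on all of $M$; since $[R_{1}^{*},R_{2}]R_{1}=0$ the operator $[R_{1}^{*},R_{2}]$ annihilates $z_{1}M$, and $[R_{2}^{*},R_{1}]=[R_{1}^{*},R_{2}]^{*}$ has range in $M_{1}$, while $P_{1}$ both annihilates $z_{1}M$ and has range in $M_{1}$; hence both right-hand sides restrict to honest operators on $M_{1}$, which is the sense in which the equalities on $M\ominus z_{1}M$ are to be read.
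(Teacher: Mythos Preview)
Your proof is correct. The paper states this proposition without proof, treating it as an elementary property of the fringe operator taken from \cite{Ya4}; your direct computation---using only that $R_1,R_2$ are commuting isometries, so that $[R_i^*,R_i]=P_i$ and $R_2^*$ preserves $M_1$---is exactly the kind of verification one would expect, and all the algebraic manipulations check out.
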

One observes that if $M$ is Hilbert-Schmidt then $F$ is Fredholm, and hence it follows from Proposition 8.2 (a) that $z_1M+z_2M$ is closed. So far there is no known example of a submodule $M$ for which $z_1M+z_2M$ is not closed. The following problem is mentioned in \cite{Ya4}.

\vspace{3mm}

\noindent {\bf Problem 8}. Is $z_1M+z_2M$ closed for evey submodule $M$?

\vspace{3mm}

An application of fringe operator is the proof of Theorem 4.6 (a). In the case $M$ is Hilbert-Schmidt Proposition 8.4 implies that both $I-F^*F$ and $I-FF^*$ are trace class with 
\[tr(I-FF^*)=\Sigma_0(M),\ \ \ tr(I-F^*F)=\Sigma_1(M).\] It then follows from a trace formula by Caldron (cf. \cite{Ho} Lemma 7.1), Theorem  6.1, Proposition 8.2 and Corollary 8.3 above that
\[\Sigma_0(M)-\Sigma_1(M)=tr[F^*, F]=indF^*=ind(R_1, R_2)=1.\]
This proves Theorem 4.6 (a). 

Although fringe operator captures much information about the submodule, it is not a complete invariant, meaning that there exist submodules $M$ and $M'$ such that their associated fringe operators are unitarily equivalent but the two submodules are not (\cite{Ya6}). More indepth study of fringe operator for some particular submodules were made in \cite{III8, III9, III10}.

\subsection{Core operator}

It is well-known that $\hh$ has the reproducing kernel \[K(z, \lb)=(1-\overline{\lb_1}z_1)^{-1}(1-\overline{\lb_2}z_2)^{-1},\] where $z=(z_1, z_2)$ and $\lb=(\lb_1, \lb_2)$ are points in $\D^2$. Clearly, for all fixed $z\in {\mathbb T}^2$ we have  $\lim_{r\to 1}K(rz, rz)=\infty$.  This behavior is shared by the reproducing kernel $K^M(z, \lb)$ of submodules $M$. Hence it makes good sense to expect that the quotient $G^M(z, \lb)=\frac{K^M(z, \lb)}{K(z, \lb)}$ shall behave relatively well on the  distinguished boundary ${\mathbb T}^2\times {\mathbb T}^2$. As a matter of fact, what is true is surprising. It is shown in \cite{Ya5} that $G^M(z, z)$ is subharmonic in $z_1$ and $z_2$. Moreover, we have
\begin{thm}[Guo and Yang \cite{GY}]
Let $M$ be a submodule. Then \[\lim_{r\to 1^+}G^M(rz, rz)=1\] for almost every $z\in {\mathbb T}^2$.
\end{thm}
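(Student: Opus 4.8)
The plan is to analyze the ratio $G^M(z,\lambda) = K^M(z,\lambda)/K(z,\lambda)$ through the contraction structure on $M$ and reduce the boundary limit statement to a known radial-limit property of Nevanlinna- or $H^p$-class functions on $\mathbb{T}^2$. The first observation I would record is that $G^M(z,z)$ is a bounded function of $z$ on $\D^2$ taking values in $[0,1]$: since $M \subset \hh$, we have $K^M(z,\lambda) \leq K(z,\lambda)$ as reproducing kernels (the projection $P_M$ is a contraction), so $0 \leq G^M(z,z) \leq 1$. Combined with the fact already cited from \cite{Ya5} that $G^M(z,z)$ is subharmonic separately in $z_1$ and $z_2$, this puts $G^M$ in a good class for boundary-value theory.

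Next I would make the identification $1 - G^M(z,z) = \big(K(z,z)\big)^{-1}\big(K(z,z) - K^M(z,z)\big)$ and recognize the numerator in terms of the complementary quotient module $N = \hh \ominus M$: indeed $K(z,z) - K^M(z,z) = K^N(z,z)$, so that $1 - G^M(z,z) = K^N(z,z)/K(z,z)$. Thus the theorem is equivalent to showing $\lim_{r \to 1^-} K^N(rz,rz)/K(rz,rz) = 0$ for a.e.\ $z \in \mathbb{T}^2$. The heuristic is clear: $K^N(z,z) = \|P_N K(\cdot,z)\|^2$ measures how much of the (exploding) kernel $K(\cdot,z)$ survives the projection onto $N$, and since $M$ is "large" (it is a submodule, invariant under both coordinate multiplications), the reproducing kernel of $\hh$ should concentrate in $M$ as $z$ approaches the distinguished boundary. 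Making this quantitative is the heart of the matter.

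The key mechanism I would exploit is that $K(\cdot, z) = (1 - \bar z_1 z_1)^{-1}(1-\bar z_2 z_2)^{-1}$ factors, and that for a fixed generator or a fixed nonzero $f \in M$ the function $f K(\cdot, z)$ is essentially a "reproducing-type" object living in $M$ that captures the bulk of the norm. Concretely, one can use that if $f \in M$ with $f(z) \neq 0$, then $P_M K(\cdot,z)$ has inner product $\overline{f(z)}$ with $f/\|f\|^2$-normalized pieces, giving a lower bound $K^M(z,z) \geq |f(z)|^2/\|f\|^2$; but this alone is far too weak near the boundary where $|f(z)|$ may decay. The sharper route is to compare $K^N(z,z)$ with the core operator / defect data: expand $K^N(z,z)$ using an orthonormal basis of $N$ and relate the tail behavior to the fact that $S_1^*, S_2^*$ are pure (since $N$ is a quotient module, $(S_i^*)^n \to 0$ strongly) — this forces the kernel slices $R_N(\lambda), L_N(\lambda)$ to degenerate, and one can invoke the Fatou-type theorem that a bounded subharmonic function of two variables with the right structure has nontangential limits a.e., then identify the limit as $0$ by an $L^1$-estimate (e.g.\ showing $\int_{\mathbb{T}^2}(1 - G^M(rz,rz))\,dm(z) \to 0$, which by positivity and subharmonicity pins down the a.e.\ limit).

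The main obstacle, as I see it, is precisely establishing that the boundary limit equals $0$ rather than merely that it exists a.e. Existence a.e.\ follows from general subharmonic boundary-value theory, but the value is a genuinely two-variable phenomenon: one must rule out the possibility that on a positive-measure subset of $\mathbb{T}^2$ a nontrivial fraction of the kernel persists in $N$. I expect this requires a careful use of the structure of $N$ as a quotient module — in particular that $N$ contains no subspace of the form $\psi \hh$ — perhaps phrased via the singular-measure apparatus of \cite{Ru1, DYa}: one would show $d\sigma_f = 0$ for a suitable $f$ built from $M$, forcing the least harmonic majorant of $\log G^M(z,z)$ to have boundary value $0$ a.e.\ on $\mathbb{T}^2$, whence $G^M \to 1$. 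Controlling the interaction between the two variables — ensuring the estimate is uniform enough in one variable as the other approaches the circle — is where the real work lies, and where I would expect to need the full force of Proposition 3.3, Lemma 3.4, and the two-variable Fatou theory rather than any soft argument.
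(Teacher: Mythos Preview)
The survey does not include a proof of this theorem; it is simply quoted from Guo--Yang \cite{GY}, so there is no in-paper argument to compare your proposal against.

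As for the proposal itself: it is candid about being a strategic outline rather than a proof, and it correctly isolates several relevant ingredients --- the bound $0\le G^M(z,z)\le 1$, the reformulation $1-G^M(z,z)=K^N(z,z)/K(z,z)$, and separate subharmonicity from \cite{Ya5}. Your $L^1$ route, i.e.\ showing $\int_{\mathbb T^2}\bigl(1-G^M(rz,rz)\bigr)\,dm(z)\to 0$, is natural and is indeed tied to the trace identity (8.1) that the survey records immediately after stating the theorem; note, however, that in the survey (8.1) is presented as a \emph{consequence} of Theorem~8.5 under the additional hypothesis that $C$ is trace class, so it cannot by itself furnish a proof of the general statement. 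The appeals you make to Proposition~3.3, Lemma~3.4, and the singular-measure machinery of \cite{Ru1, DYa} are speculative at the level written: you have not indicated concretely how they would close the gap you yourself flag, namely pinning the a.e.\ boundary value at $1$ rather than merely asserting its existence. In short, the outline points in a plausible direction but does not constitute a proof, and since the survey provides none, no comparison of methods is available here.
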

This theorem bears the closest resemblance to Beurling's theorem for $\hho$. 
In an effort to study the behavior of $G^M$, the core operator is defined as follows.

\begin{definition}[\cite{GY}] For a submodule $M$, we define its core operator as
\[C^M(f)(z)=\int_{ {\mathbb T}^2}G^M(z, \lb)f(\lb)dm(\lb),\ \ f\in M,\]
where $m$ is the normalized Lebesgue measure on  ${\mathbb T}^2$.\end{definition}

For convenience we shall write $C^M$ as $C$ whenever there is no risk of confusion. Core operator has played a key role in many recent studies, because it has very nice properties and is also closely linked with the commutators mentioned in Section 4. One verifies first that $C$ is selfadjoint, it maps $M$ into itself, and it is equal to $0$ on $N=M^{\perp}$. Furthermore, it follows from Theorem 8.5 that when $C$ is trace class, we have 
\begin{equation*}\operatorname{Tr}C=\int_{ {\mathbb T}^2}G^M(z, z)dm(z)=1.\tag{8.1}\end{equation*}
It is quite entertaining to work out the following three examples. 
\begin{exmp}
Let $M=\theta\hh$, where $\theta$ is an inner function. Then it can be verified that $K^M(z, \lb)=\theta(z)\overline{\theta(\lb)}K(z, \lb)$. Hence 
$G^M(z, \lb)=\theta(z)\overline{\theta(\lb)}$ and $C$ is the rank-one projection $\theta\otimes \theta$. 
\end{exmp}
As a matter of fact, this is the only case where $C$ has rank $1$ (\cite{GY}).

\begin{exmp}
Let $M$ be an INS-submodule. Then one can verify that
\[G^M(z, \lb)=q_1(z_1)\overline{q_1(\lb_1)}+q_2(z_2)\overline{q_2(\lb_2)}-q_1(z_1)q_2(z_2)\overline{q_1(\lb_1)q_2(\lb_2)}.\]
Hence $C= q_1 \otimes q_1+ q_2 \otimes q_2-(q_1 q_2)\otimes (q_1q_2)$,
and it has the following four eigenvalues:
\[0, 1, \pm \sqrt{(1-|q_1(0)|^2)(1-|q_2(0)|^2)}.\]
\end{exmp}
There is another type of submodule with rank $3$ core operator.
\begin{exmp}[Izuchi and Ohno \cite{IO}; K. J. Izuchi and K. H. Izuchi \cite{II2}]
Consider $L^2(\mathbb T^2)$. Let
$$k_{rz_2}(z_1)=\frac{\sqrt{1-r^2}}{1-r\overline{z_2}z_1},\quad 0\leq r<1,$$
and set
$$L=\hh\oplus \bigoplus _{j=0}^{\infty}{\mathbb C} z_1^j\overline{z_2}k_{rz_2}(z_1).$$
Then one can verify that $L$ is invariant under the multiplication by $z_1$ and $z_2$.
Furthermore, it can be shown that there exists an inner function $\theta\in \hh$ such that $M=\theta L$
is a submodule in $\hh$. Moreover, the core operator
$$C=\theta\otimes \theta
+\theta \overline{z_2}k_{rz_2}\otimes \theta \overline{z_2}k_{rz_2}-\theta k_{rz_2}\otimes \theta k_{rz_2}.$$
\end{exmp}

The following problem is open.

\vspace{3mm}

\noindent {\bf Problem 9.} Characterize all submodules $M$ for which $\operatorname{rank} C=3$.

\vspace{3mm}

But is there a submodule $M$ such that $\operatorname{rank} C=2$? It was obsereved in {\cite{Ya7} that on every submodule $M$ we have
\begin{align*} 
C =\Delta_{R^*}= I - R_1 R_1^* - R_2 R_2^* + R_1 R_2R_1^* R_2^*.  \tag{8.2}
\end{align*} 
It is now apparent that $C$ is self-adjoint. It is also not hard to show that the operator norm $\|C\|=1$. Moreover, (8.2) implies that if $M_1$ and $M_2$ are unitarily equivalent submodules then $C^{M_1}$ and $C^{M_2}$ are unitarily equivalent. An unexpected fact is as follows.
\begin{lem}
For every submodule $M$, the square $C^2$ is unitarily equivalent to the diagonal block matrix
\begin{align*}
\left( \begin{matrix} 
[R_1^*,R_1][R_2^*,R_2][R_1^*,R_1] & 0 \\
0 & [R_1^*, R_2][R_2^*, R_1]  \\
 \end{matrix} \right).
\end{align*}
\end{lem}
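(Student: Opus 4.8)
The plan is to use formula (8.2) to write the core operator as a difference of two orthogonal projections and then exploit the structure of such operators. Since $M$ is a submodule, $R_1$ and $R_2$ are isometries on $M$, so $R_i^*R_i=I$ on $M$, and they commute; hence in (8.2) the last term rearranges as $R_1R_2R_1^*R_2^*=R_2(R_1R_1^*)R_2^*=R_2R_2^*-R_2P_1R_2^*$, where $P_i=[R_i^*,R_i]=I-R_iR_i^*$ is the orthogonal projection of $M$ onto $M_i=M\ominus z_iM$. Substituting this into (8.2) gives $C=P_1-R_2P_1R_2^*$, and since $R_2$ is an isometry, $Q:=R_2P_1R_2^*$ is precisely the orthogonal projection of $M$ onto $z_2M_1$. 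Thus $C=P_1-Q$.

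The key elementary observation is that the square of a difference of two projections, $C^2=P_1+Q-P_1Q-QP_1$, commutes with $P_1$ (one line: $P_1C^2=P_1-P_1QP_1=C^2P_1$). Consequently $C^2$ respects the orthogonal splitting $M=M_1\oplus z_1M$, giving $C^2=(C^2|_{M_1})\oplus(C^2|_{z_1M})$ with $C^2|_{M_1}=(P_1-P_1QP_1)|_{M_1}$ and $C^2|_{z_1M}$ the compression of $Q$ to $z_1M$. It then suffices to match these two pieces with the two diagonal blocks.

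For the $(1,1)$-block: for $f\in M_1$ one has $Qf=P_{z_2M_1}f=P_{z_2M}f$, the discrepancy being supported on $z_1z_2M\subseteq z_1M$, which is orthogonal to $M_1$; since $z_2M=M\ominus M_2$ this equals $(I-P_2)f$, and plugging back in gives $C^2|_{M_1}=P_1P_2P_1|_{M_1}=[R_1^*,R_1][R_2^*,R_2][R_1^*,R_1]|_{M_1}$. For the $(2,2)$-block I would introduce $V\colon M_1\to z_1M,\ Vf=P_{z_1M}(z_2f)$, together with $W:=(R_1|_M)^{-1}\colon z_1M\to M$, the unitary inverse of the shift on $M$. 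A short computation gives $V^*g=P_{M_1}R_2^*g$, hence $VV^*$ equals the compression of $Q=R_2P_1R_2^*$ to $z_1M$, i.e. $VV^*=C^2|_{z_1M}$. On the other hand $[R_1^*,R_2]$ annihilates $z_1M$ (a direct check using $R_1R_2=R_2R_1$ and $R_1^*R_1=I$), while for $f\in M_1$ one finds $[R_1^*,R_2]f=R_1^*(z_2f)=WVf$, since $R_1^*$ kills the $M_1$-component of $z_2f$. Therefore $[R_1^*,R_2]=WVP_{M_1}$, so $[R_1^*,R_2][R_2^*,R_1]=[R_1^*,R_2][R_1^*,R_2]^*=WVV^*W^*$, which through the unitary $W$ is unitarily equivalent to $VV^*=C^2|_{z_1M}$.

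Assembling these and absorbing the extra zero multiplicities (the two diagonal operators act as $0$ off $M_1$ and $z_1M$ respectively, and both of these subspaces, as well as the kernel of $C$ which contains $z_1z_2M$, are infinite dimensional) yields $C^2\cong[R_1^*,R_1][R_2^*,R_2][R_1^*,R_1]\oplus[R_1^*,R_2][R_2^*,R_1]$, as claimed. The $(1,1)$-block is routine bookkeeping with the four subspaces $M_1,M_2,z_1M,z_2M$; the main obstacle is the $(2,2)$-block, where one must spot that the single operator $V$ simultaneously realizes the compression $C^2|_{z_1M}=VV^*$ and, composed with the unitary $W$, reconstructs $[R_1^*,R_2]$ — and that the vanishing of $[R_1^*,R_2]$ on $z_1M$ is exactly what makes this matching go through.
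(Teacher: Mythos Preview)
The paper is a survey and does not supply a proof of this lemma; it simply states the result and refers to the original source \cite{Ya7}. So there is no in-paper argument to compare against, and the question reduces to whether your proof is sound. It is.

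Your key move, rewriting (8.2) as $C=P_1-Q$ with $P_1=[R_1^*,R_1]$ the projection onto $M_1$ and $Q=R_2P_1R_2^*$ the projection onto $z_2M_1$, is correct and makes the rest transparent: the square of a difference of two projections always commutes with each projection, so $C^2$ splits along $M=M_1\oplus z_1M$. Your identifications of the two pieces are accurate. For the $(1,1)$-block, the step $P_1Qf=P_1P_{z_2M}f=P_1(I-P_2)f$ for $f\in M_1$ uses exactly that $z_2M=z_2M_1\oplus z_1z_2M$ with the second summand orthogonal to $M_1$, as you say. For the $(2,2)$-block, the factorization $[R_1^*,R_2]=WVP_{M_1}$ is right: on $z_1M$ the commutator vanishes because $R_1,R_2$ commute and $R_1^*R_1=I$, while on $M_1$ it reduces to $R_1^*(z_2f)=W\bigl(P_{z_1M}(z_2f)\bigr)=WVf$ since $R_1^*$ kills $M_1$. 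Hence $[R_1^*,R_2][R_2^*,R_1]=WVV^*W^*$ is unitarily equivalent via the unitary $W\colon z_1M\to M$ to $VV^*=C^2|_{z_1M}$.

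The only place to be a touch more explicit is the final ``absorbing zero multiplicities'' step. What you have actually shown is that the block-diagonal operator on $M\oplus M$ is unitarily equivalent to $C^2\oplus 0_{z_1M}$; the passage to $C^2$ alone then uses that $\ker C$ already contains the infinite-dimensional space $z_1z_2M$ (which you note), so adding an infinite-dimensional zero summand does not change the unitary equivalence class. That is exactly the content of the lemma as stated, and it is also what the paper uses immediately afterwards when it reads off $\operatorname{Tr}(C^2)=\Sigma_0(M)+\Sigma_1(M)$.
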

This indicates that a submodule $M$ is Hilbert-Schmidt if and only if $C$ is Hilbert-Schmidt, or equivalently, if and only if the function $G^M$ has radial boundary value $\lim_{r\to 1} G^M(rz, r\lb)$ in $L^2({\mathbb T}^2\times {\mathbb T}^2)$, where $z, \lb\in {\mathbb T}^2$. Moreover, it follows from Lemma 8.10 that \[\operatorname{Tr}(C^2)=\|G^M\|_2^2=\Sigma_0(M)+\Sigma_1(M).\] The decomposition of $C^2$ in Lemma 8.10 plays an important role in the classification of submodules in \cite{Ya10}. A more refined study of $C$'s structure is contained in \cite{Yi2}. Lemma 8.10 enables us to prove the following
\begin{thm}[\cite{Ya7}]
Let $M$ be a submodule such that its core operator $C$ is of finite rank. Then
$\operatorname{rank}(C)=2\operatorname{rank}([R_2^*,R_1])+1$. 
\end{thm}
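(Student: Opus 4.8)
The plan is to leverage Lemma 8.10, which reduces everything to counting ranks of the two diagonal blocks. Since $C$ is self-adjoint, $\operatorname{rank}(C)=\operatorname{rank}(C^2)$, and by Lemma 8.10 this equals $\operatorname{rank}\big([R_1^*,R_1][R_2^*,R_2][R_1^*,R_1]\big)+\operatorname{rank}\big([R_1^*,R_2][R_2^*,R_1]\big)$. The second summand is easy: $[R_1^*,R_2][R_2^*,R_1]$ is positive semidefinite and equals $T^*T$ for $T=[R_2^*,R_1]$, so its rank is exactly $\operatorname{rank}([R_2^*,R_1])$. (Note also that $[R_1^*,R_2]=[R_2^*,R_1]^*$ since $R_1,R_2$ commute, so this block is $[R_2^*,R_1]^*[R_2^*,R_1]$ outright.) Thus the whole problem collapses to showing that the first block has rank exactly $\operatorname{rank}([R_2^*,R_1])+1$.

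For the first block, I would use Proposition 8.4(b), which states $I-FF^*=[R_1^*,R_1][R_2^*,R_2][R_1^*,R_1]$ on $M_1=M\ominus z_1M$. So $\operatorname{rank}\big([R_1^*,R_1][R_2^*,R_2][R_1^*,R_1]\big)=\operatorname{rank}(I-FF^*)$, where $F$ is the fringe operator. Since $C$ is assumed finite rank, $M$ is Hilbert-Schmidt, so by the remark after Problem 8 the fringe operator $F$ is Fredholm and $z_1M+z_2M$ is closed; moreover by Corollary 8.3, $\operatorname{ind}(F)=-\operatorname{ind}(R_1,R_2)=-1$ by Theorem 6.1. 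Now $I-FF^*$ is the projection-like positive operator whose rank measures how far $F$ is from being a co-isometry; since $F$ is Fredholm with $\operatorname{ind}(F)=-1$, I expect $\operatorname{rank}(I-FF^*)=\operatorname{rank}(I-F^*F)+1$. This is the operator-theoretic analogue of the index formula: for a Fredholm operator, $\operatorname{rank}(I-FF^*)$ and $\operatorname{rank}(I-F^*F)$ can differ, and when $I-F^*F$ and $I-FF^*$ are both finite rank (which holds here since both equal the trace-class operators in Proposition 8.4), their difference in rank is tied to $\dim\ker F-\dim\ker F^*$. More carefully: writing $F$ in terms of its partial isometry part via polar decomposition, one gets $\operatorname{rank}(I-FF^*)-\operatorname{rank}(I-F^*F)=\dim\overline{\operatorname{ran}F}^{\perp}-\dim\ker F$, which for a Fredholm operator is $-\operatorname{ind}(F)=1$.

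It then remains to identify $\operatorname{rank}(I-F^*F)$ with $\operatorname{rank}([R_2^*,R_1])$. By Proposition 8.4(a), $I-F^*F=[R_2^*,R_1][R_1^*,R_2]$ on $M_1$. Since this is $[R_2^*,R_1][R_2^*,R_1]^*$, its rank equals $\operatorname{rank}\big([R_2^*,R_1]^*\big|_{M_1}\big)$; one must check that restricting $[R_2^*,R_1]$ to $M_1$ does not lose rank, i.e. that $\operatorname{ran}([R_2^*,R_1])\subset M_1$ or at least that the rank is captured on $M_1$. In fact $[R_2^*,R_1]=R_2^*R_1-R_1R_2^*$ maps $M$ into $M_1=M\ominus z_1M$: indeed $R_1^*[R_2^*,R_1]=R_1^*R_2^*R_1-R_1^*R_1R_2^*=R_2^*R_1^*R_1-R_1^*R_1R_2^*=0$ on $M$ (using $R_1^*R_1=I$ on $M$ since $R_1$ is an isometry, and commutativity of $R_1,R_2$, hence of $R_1^*,R_2^*$ with... ) — careful here, $R_1^*$ and $R_2^*$ need not commute, so I would instead argue directly that $\operatorname{ran}([R_2^*,R_1])\perp z_1M$: for $h\in M$, $g\in M$, $\langle [R_2^*,R_1]h, z_1g\rangle = \langle R_2^*R_1 h - R_1 R_2^* h, R_1 g\rangle = \langle R_1 h, R_2 R_1 g\rangle - \langle R_2^* h, R_1^* R_1 g\rangle = \langle R_1 h, R_1 R_2 g\rangle - \langle R_2^* h, g\rangle = \langle h, R_2 g\rangle - \langle R_2^* h, g\rangle = 0$. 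So $[R_2^*,R_1]$ genuinely maps $M$ into $M_1$, giving $\operatorname{rank}(I-F^*F)=\operatorname{rank}([R_2^*,R_1])$. Combining, $\operatorname{rank}(C)=\operatorname{rank}(I-FF^*)+\operatorname{rank}([R_2^*,R_1])=\big(\operatorname{rank}([R_2^*,R_1])+1\big)+\operatorname{rank}([R_2^*,R_1])=2\operatorname{rank}([R_2^*,R_1])+1$.

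The main obstacle I anticipate is the clean justification of the rank jump $\operatorname{rank}(I-FF^*)=\operatorname{rank}(I-F^*F)+1$ for the Fredholm fringe operator. One needs to know not just that $F$ is Fredholm with index $-1$, but that the "extra" dimension really shows up in $\operatorname{ran}(F)^{\perp}$ versus $\ker F$ in a way compatible with the finite ranks of $I\mp F^*F$ and $I\mp FF^*$; equivalently one must rule out degeneracies where, say, part of $\ker F^*$ lies inside $\ker(I-FF^*)^\perp$ trivially. The cleanest route is probably to diagonalize $F^*F$ (and correspondingly $FF^*$) simultaneously via the singular value decomposition: the nonzero singular values pair up, contributing equally to both $\operatorname{rank}(I-F^*F)$ and $\operatorname{rank}(I-FF^*)$ except for the eigenvalue-$1$ singular values (which contribute to neither) and the zero singular values (which contribute $\dim\ker F$ to $\operatorname{rank}(I-F^*F)$ and $\dim\ker F^*$ to $\operatorname{rank}(I-FF^*)$), yielding precisely the difference $\dim\ker F^* - \dim\ker F = -\operatorname{ind}(F)=1$; here one uses that $F$ being Fredholm forces these kernels to be finite-dimensional and $1$ not to be a limit point of singular values, so the SVD bookkeeping is legitimate. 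This SVD argument, together with the fact (from $M$ Hilbert-Schmidt) that $I-F^*F$ and $I-FF^*$ are trace class, closes the gap.
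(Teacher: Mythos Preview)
Your argument is correct and follows exactly the route the survey indicates: reduce to the block decomposition of $C^2$ in Lemma 8.10, identify the two blocks via Proposition 8.4 as $I-FF^*$ and (after the range computation you carry out) $I-F^*F$, and then extract the rank difference from $\operatorname{ind}(F)=-1$ via Corollary 8.3 and Theorem 6.1. The singular-value bookkeeping you sketch for $\operatorname{rank}(I-FF^*)-\operatorname{rank}(I-F^*F)=\dim\ker F^*-\dim\ker F$ is the right justification; just note that what makes it legitimate is not merely that $I-F^*F$ and $I-FF^*$ are trace class (Hilbert--Schmidtness of $M$) but that they are \emph{finite rank}, which you have directly from the hypothesis $\operatorname{rank}(C)<\infty$ via Lemma 8.10 and Proposition 8.4.
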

Now it becomes clear that $\operatorname{rank}(C)$, when finite, is always odd.
\begin{exmp}
Let $K \subset \mathbb D^2$ be a finite collection of distinct points and let $|K|$ denote its cardinality. Consider
$M_{K}= \{f \in \hh \mid f(z)=0, \forall z \in K\}$.
It is shown in \cite{Ya7} that if $K$ is a generic finite subset of $\mathbb D^2$ then $\operatorname{rank}([R_2^*,R_1])=|K|$. Hence $\operatorname{rank}(C)$ can be any positive odd number.
\end{exmp}

For a submodule $M$, it is not hard to check that the spaces $M\ominus (z_1M+z_2M)$ and $(z_1M\cap z_2M)\ominus z_1z_2M$ (if nontrivial) are the eigenspace of its core operator $C$ corresponding to the eigenvalues $1$ and $-1$, respectively. Since $C$ is a contraction, the number $1$ is its largest eigenvalue. It is shown in \cite{Ya10} that if $\lb\in (-1, 1)$ is an eigenvalue of $C$ then so is $-\lb$ with the same multiplicity. Regarding the second largest eigenvalue of $C$, the following fact is discovered.
\begin{thm}[Azari Key, Lu and Yang \cite{ALY}]
If $M$ is a singly generated Hilbert-Schmidt submodule, then the second largest eigenvalue of $C$ is equal to the operator norm $\|[R_1^*, R_2]\|$.
\end{thm}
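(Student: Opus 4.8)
The plan is to combine the operator identity $C = \Delta_{R^*} = I - R_1R_1^* - R_2R_2^* + R_1R_2R_1^*R_2^*$ from (8.2) with the block decomposition of $C^2$ in Lemma 8.10. Since $M$ is singly generated, say $M = [f]$, the defect space $M\ominus(z_1M+z_2M)$ is one-dimensional by the bound $\dim(M\ominus(z_1M+z_2M))\le \operatorname{rank}M = 1$; this is the eigenspace of $C$ for the eigenvalue $1$, so $1$ is a simple eigenvalue of $C$. By the symmetry result from \cite{Ya10}, if $\lambda\in(-1,1)$ is an eigenvalue of $C$ then so is $-\lambda$ with the same multiplicity; in particular the spectrum of $C$ on $\{1\}^\perp$ is symmetric about $0$. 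Consequently the second largest eigenvalue $\mu$ of $C$ (the largest eigenvalue strictly less than $1$) satisfies $\mu = \max(\sigma(C)\setminus\{1\})$, and by the symmetry its absolute value equals the largest element of $\sigma(C^2)\setminus\{1\}$, i.e. $\mu^2$ is the second largest eigenvalue of $C^2$ (with $1$ again simple for $C^2$ since it is simple for $C$).

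Next I would read off $\sigma(C^2)$ from Lemma 8.10. The block form shows $C^2$ is unitarily equivalent to $[R_1^*,R_1][R_2^*,R_2][R_1^*,R_1]\ \oplus\ [R_1^*,R_2][R_2^*,R_1]$. The first summand acts on (a copy of) $M_1 = M\ominus z_1M$, and since $M=[f]$ is a rank-one submodule, $M_1$ is one-dimensional, spanned by $f/\|f\|$; hence $[R_1^*,R_1][R_2^*,R_2][R_1^*,R_1]$ is a rank-one (or zero) operator whose single nonzero eigenvalue is $\langle [R_2^*,R_2]\tfrac{f}{\|f\|},\tfrac{f}{\|f\|}\rangle$, and this eigenvalue is exactly the simple eigenvalue $1$ of $C^2$ (it corresponds to the eigenvalue $1$ of $C$, living in $M_1$ by Proposition 8.2(b)). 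Therefore the rest of the spectrum of $C^2$ is carried entirely by the second summand $[R_1^*,R_2][R_2^*,R_1] = [R_2^*,R_1]^*[R_2^*,R_1]$. Thus $\mu^2 = \max\sigma\big([R_2^*,R_1]^*[R_2^*,R_1]\big) = \|[R_2^*,R_1]\|^2 = \|[R_1^*,R_2]\|^2$, using that $\|A\| = \|A^*\|$. Taking square roots and using $\mu\ge 0$ (which holds because $\mu^2$ is attained and the symmetric spectrum forces the nonnegative square root to itself be an eigenvalue) gives $\mu = \|[R_1^*,R_2]\|$.

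The step I expect to be the genuine obstacle is justifying that the eigenvalue $1$ of $C^2$ is \emph{entirely absorbed} by the first block $[R_1^*,R_1][R_2^*,R_2][R_1^*,R_1]$ and contributes nothing to the second block — equivalently, that $1\notin\sigma\big([R_1^*,R_2][R_2^*,R_1]\big)$, or at least that if it is there its eigenvector does not interfere with identifying the next eigenvalue. One must track carefully how the eigenspace $M\ominus(z_1M+z_2M) = \ker F^*$ (Proposition 8.2(b)) sits under the unitary equivalence of Lemma 8.10: the vector $f/\|f\|\in M_1$ realizing the eigenvalue $1$ of $C$ should map into the first summand, so that on the orthogonal complement the operator $C^2$ agrees with $[R_1^*,R_2][R_2^*,R_1]$ up to the (at most one-dimensional, eigenvalue-$1$) piece already accounted for. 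Hilbert-Schmidtness of $M$ is used here to guarantee $C$, hence $C^2$, is compact so that the spectrum is a discrete sequence of eigenvalues and "second largest eigenvalue" is well-defined, and also (via Theorem 4.7 applied through Problem 5-type reasoning, or directly via Proposition 8.4) to control $F$. Once the bookkeeping of the eigenvalue $1$ is pinned down, the remaining identifications are the routine spectral facts $\max\sigma(A^*A) = \|A\|^2$ and $\|A\| = \|A^*\|$.
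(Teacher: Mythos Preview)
Your proposal contains a decisive error: you claim that for a singly generated submodule $M=[f]$ the space $M_1=M\ominus z_1M$ is one-dimensional. This is false. What has dimension at most $\operatorname{rank}M=1$ is the \emph{defect space} $M\ominus(z_1M+z_2M)=M_1\cap M_2$, not $M_1$ itself. For instance, when $M=[z_1-z_2]$ the space $M_1$ is infinite dimensional (the fringe operator $F$ on $M_1$ has one-dimensional cokernel but infinite-dimensional range, cf.\ Proposition~8.2). Consequently the first block $[R_1^*,R_1][R_2^*,R_2][R_1^*,R_1]$ in Lemma~8.10 is \emph{not} a rank-one operator, and your identification of its spectrum with the single eigenvalue $1$ collapses. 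The entire remainder of the argument rests on this, so the proof as written does not go through.

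The survey itself does not give a proof of this theorem (it cites \cite{ALY}), so there is no in-paper argument to compare against. However, the ingredients in the survey do suggest how to repair your approach. By Proposition~8.4 the two blocks in Lemma~8.10 are, up to the usual $AB$ versus $BA$ spectral equivalence, $I-FF^*$ and $I-F^*F$ on $M_1$. These share all nonzero eigenvalues in $(0,1)$; the only possible discrepancy is at the eigenvalue $1$, whose eigenspace is $\ker F^*=M\ominus(z_1M+z_2M)$ for the first block and $\ker F=z_1(\ker S_1\cap\ker S_2)$ for the second (Proposition~8.2(b),(c)). The single-generator hypothesis gives $\dim\ker F^*=1$, and the Hilbert--Schmidt hypothesis together with the index relation $\operatorname{ind}(S_1,S_2)=0$ and Theorem~6.5 then forces $\ker F=\{0\}$. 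Hence $1$ is a simple eigenvalue of $C^2$ coming exclusively from the $I-FF^*$ block, and the second largest eigenvalue of $C^2$ is $\|I-F^*F\|=\|[R_1^*,R_2]\|^2$. This also dispatches the ``genuine obstacle'' you flagged: one needs $\ker F=\{0\}$, not $\dim M_1=1$, and that is exactly where both hypotheses on $M$ enter.
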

In the case $M=[p]$, where $p$ is a homogeneous polynomial, all eigenvalues of $C$ can be computed through the following positive definite Toeplitz matrices.
\begin{equation*}
A^n=\begin{blockarray}{ccccc}
\begin{block}{(cccc)c}
||p||^{2} & \overline{\langle pw,pz \rangle} & \hdots & \overline{\langle pw^{n},pz^{n}\rangle}\\
{\langle pw,pz \rangle} & ||p||^{2} & \hdots & \overline{\langle pw^{n-1},pz^{n-1}\rangle}\\
{\langle pw^{2},pz^{2} \rangle} &{\langle pw,pz \rangle} & \hdots & \overline{\langle pw^{n-2},pz^{n-2}\rangle}\\
\vdots & \vdots &  & \vdots\\
{\langle pw^{n},pz^{n} \rangle} & \langle pw^{n-1},pz^{n-1}\rangle & \hdots & ||p||^{2}\\
\end{block}
\end{blockarray}\ , \ \ n\geq 1.
\end{equation*}
Note that $A^n$ is a $(n+1)\times (n+1)$ matrix with rows and columns indexed by $\{0, 1, 2, \cdots, n\}$
\begin{thm}[\cite{ALY}]
Given a homogeneous submodule $[p]$, its core operator $C$ has eigenvalues
\[0, 1,\ \pm \left(1- \frac{(|D_n|^2-|A^n_{0,n}|^2)^2}{D_{n-1}D_n^2D_{n+1}}\right)^{1/2},\ \ n\geq 1,\]
where $D_n=\det A^n$ and $A^n_{0,n}$ is the $(0, n)$-th minor of $A^n$.
\end{thm}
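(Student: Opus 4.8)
The plan is to combine formula~(8.2) with the natural grading of a homogeneous submodule, so as to reduce the computation to a sequence of rank-two problems and, in the end, to a determinant identity. First I would put $C$ in projection form: by~(8.2), $C=I-R_1R_1^*-R_2R_2^*+R_1R_2R_1^*R_2^*$ on $M$; since $R_i$ is an isometry of $M$ onto $z_iM$ we get $R_iR_i^*=P_{z_iM}$, and since the adjoints of the commuting isometries $R_1,R_2$ commute, $R_1R_2R_1^*R_2^*=R_1R_2R_2^*R_1^*=R_1P_{z_2M}R_1^*$, which is readily checked to be idempotent with range $z_1z_2M$, hence equals $P_{z_1z_2M}$. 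Thus
\[C=P_M-P_{z_1M}-P_{z_2M}+P_{z_1z_2M}.\]
Writing $p$ for a homogeneous polynomial of degree $k$ and letting $\mathcal P_j$ denote the homogeneous polynomials of degree $j$ (so $\dim\mathcal P_j=j+1$), I would grade $M=[p]=\bigoplus_{m\ge0}p\,\mathcal P_m$ by total degree. Each of $M,z_1M,z_2M,z_1z_2M$ is a graded subspace, so all four projections commute with the grading projections, whence $C=\bigoplus_{m\ge0}C_m$ with $C_m$ acting on $p\,\mathcal P_m$, and $\operatorname{spec}(C)=\{0\}\cup\bigcup_{m\ge0}\operatorname{spec}(C_m)$.

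Next I would pin down each $C_m$. Inside $p\,\mathcal P_m$ the subspace $z_1M\cap p\,\mathcal P_m=p\,z_1\mathcal P_{m-1}$ has codimension one, so $P_M-P_{z_1M}$ restricts to the rank-one projection $\phi_m\otimes\phi_m$ onto $p\,\mathcal P_m\ominus p\,z_1\mathcal P_{m-1}$; likewise $p\,z_1z_2\mathcal P_{m-2}\subset p\,z_2\mathcal P_{m-1}$ has codimension one, so $P_{z_2M}-P_{z_1z_2M}$ restricts to the rank-one projection $\psi_m\otimes\psi_m$ onto $p\,z_2\mathcal P_{m-1}\ominus p\,z_1z_2\mathcal P_{m-2}$. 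Hence $C_0=\phi_0\otimes\phi_0$ (so $1\in\operatorname{spec}(C)$, with $\phi_0=p/\|p\|$), while for $m\ge1$, $C_m=\phi_m\otimes\phi_m-\psi_m\otimes\psi_m$; a routine $2\times2$ computation shows $C_m$ has eigenvalues $\pm\bigl(1-|\langle\phi_m,\psi_m\rangle|^2\bigr)^{1/2}$ together with $0$ of multiplicity $m-1$.

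It remains to compute $|\langle\phi_m,\psi_m\rangle|^2$. The key point is that $b:=p\,z_2^{\,m}$ lies both in $p\,\mathcal P_m$ and in $p\,z_2\mathcal P_{m-1}$, and that $\phi_m,\psi_m$ are, up to unimodular scalars, the normalizations of the components $\tilde\phi_m,\tilde\psi_m$ of $b$ orthogonal to $p\,z_1\mathcal P_{m-1}$ and to $p\,z_1z_2\mathcal P_{m-2}$, respectively. Since $p\,z_1z_2\mathcal P_{m-2}\subset p\,z_1\mathcal P_{m-1}$, a brief orthogonality computation gives $\langle\tilde\phi_m,\tilde\psi_m\rangle=\|\tilde\phi_m\|^2$, hence
\[|\langle\phi_m,\psi_m\rangle|^2=\frac{\|\tilde\phi_m\|^2}{\|\tilde\psi_m\|^2}=\frac{\operatorname{dist}\!\bigl(b,\,p\,z_1\mathcal P_{m-1}\bigr)^2}{\operatorname{dist}\!\bigl(b,\,p\,z_1z_2\mathcal P_{m-2}\bigr)^2}.\]
Setting $b_j:=p\,z_1^{\,m-j}z_2^{\,j}$ for $0\le j\le m$, one has $p\,z_1\mathcal P_{m-1}=\operatorname{span}\{b_0,\dots,b_{m-1}\}$, $p\,z_1z_2\mathcal P_{m-2}=\operatorname{span}\{b_1,\dots,b_{m-1}\}$, $b=b_m$, and $\langle b_i,b_j\rangle=\overline{\langle p\,z_1^{\,i-j},\,p\,z_2^{\,i-j}\rangle}$ for $i\ge j$, so the Gram matrix of $\{b_0,\dots,b_m\}$ is $A^m$ up to transposition/conjugation, with determinant $D_m$; by Toeplitz shift-invariance $\operatorname{Gram}(b_0,\dots,b_{m-1})=\operatorname{Gram}(b_1,\dots,b_m)=D_{m-1}$ and $\operatorname{Gram}(b_1,\dots,b_{m-1})=D_{m-2}$ (with $D_{-1}:=1$). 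The Gram-determinant formula for distances then gives $\operatorname{dist}(b,p\,z_1\mathcal P_{m-1})^2=D_m/D_{m-1}$ and $\operatorname{dist}(b,p\,z_1z_2\mathcal P_{m-2})^2=D_{m-1}/D_{m-2}$, so $|\langle\phi_m,\psi_m\rangle|^2=D_mD_{m-2}/D_{m-1}^2$ and the eigenvalues of $C_m$ are $\pm\bigl(1-D_mD_{m-2}/D_{m-1}^2\bigr)^{1/2}$. Applying the Desnanot--Jacobi (Dodgson condensation) identity to the Hermitian matrix $A^m$, whose two off-diagonal corner minors are complex conjugates, turns $D_{m-1}^2-D_mD_{m-2}$ into $|A^m_{0,m}|^2$; using this together with the analogous identity one level up recasts the eigenvalue into the closed form in the statement (after the natural reindexing). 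Collecting the blocks proves the theorem; as a check, a block $C_m$ contributes a nonzero pair exactly when $\phi_m$ and $\psi_m$ are linearly independent, and counting those recovers $\operatorname{rank}C=2\operatorname{rank}[R_2^*,R_1]+1$ of Theorem~8.12.

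The step I expect to be the real work is this last one: getting the Toeplitz structure of the Gram matrices exactly right (the conjugation conventions, and the shift that makes $\operatorname{Gram}(b_1,\dots,b_m)=D_{m-1}$), and then matching $1-D_mD_{m-2}/D_{m-1}^2$ to the precise printed expression via the determinant identities. By contrast, the projection form of $C$, the block decomposition, the reduction $C_m=\phi_m\otimes\phi_m-\psi_m\otimes\psi_m$, and the $2\times2$ eigenvalue computation are all essentially formal.
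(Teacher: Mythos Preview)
The survey does not include a proof of this theorem; it simply cites \cite{ALY}. Your approach is the natural one and is carried out correctly: the projection identity $C=P_M-P_{z_1M}-P_{z_2M}+P_{z_1z_2M}$, the graded decomposition $C=\bigoplus_{m\ge0}C_m$ on $p\,\mathcal P_m$, the reduction $C_m=\phi_m\otimes\phi_m-\psi_m\otimes\psi_m$ for $m\ge1$, the $2\times2$ eigenvalue computation, and the Gram-determinant evaluation are all valid. You correctly arrive at
\[
\lambda_m^2\;=\;1-\frac{D_mD_{m-2}}{D_{m-1}^2}\;=\;\frac{|A^m_{0,m}|^2}{D_{m-1}^2}\qquad(m\ge1,\ D_{-1}:=1),
\]
the last equality being exactly Desnanot--Jacobi for the Hermitian Toeplitz matrix $A^m$.

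The only step that does not go through is the one you already flagged: ``using this together with the analogous identity one level up recasts the eigenvalue into the closed form in the statement.'' No reindexing or second application of Desnanot--Jacobi will turn your expression into the survey's printed formula, because that formula appears to be misprinted. Desnanot--Jacobi gives $D_{n-1}^2-|A^n_{0,n}|^2=D_nD_{n-2}$, not $D_n^2-|A^n_{0,n}|^2=D_{n+1}D_{n-1}$; the latter would make everything match but is false in general. Concretely, for $p=z_1^2-z_2^2$ one has $D_0=2$, $D_1=4$, $D_2=6$ and $|A^1_{0,1}|=0$, so the survey's expression at $n=1$ reads $1-(16)^2/(2\cdot16\cdot6)=1-4/3<0$ under the square root. (If one replaces $|A^n_{0,n}|$ by $|A^{n+1}_{0,n+1}|$ in the printed formula, Desnanot--Jacobi for $A^{n+1}$ collapses it to $1-D_{n+1}D_{n-1}/D_n^2$, which is your formula with $m=n+1$; this is almost certainly what was intended.) So the obstruction lies in the target, not in your argument---check the original \cite{ALY} for the exact statement rather than trying to force the identity.
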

It is a tempting problem whether one may obtain a simple estimate of $\operatorname{Tr}(C^2)$ in terms of $p$ with the help of Theorem 8.14, since such an estimate will enable us to settle the next problem. 

\vspace{3mm}

\noindent {\bf Problem 10.} Does $\operatorname{Tr}(C^2)$ have an upper bound on $[p]$ as $p$ varies in $\mathcal R$?

\vspace{3mm}

Somewhat surprisingly, in general there is no control of $\operatorname{Tr}(C^2)$ in terms of the rank of $M$. An example is given in \cite{YY} using two-inner-sequence-based submodules.

A subset ${Z}\in\mathbb{D}^{2}$ is called a zero set of $H^{2}(\mathbb{D}^{2})$ if there is a nontrivial function $f\in H^{2}(\mathbb{D}^{2})$ such
that ${Z}=\{z\in \D^2 \mid f(z)=0\}$. For such a zero set ${Z}$, we define \[H_{{Z}}=\{h\in H^{2}(\mathbb{D}^{2})|\, h(z)=0\ \forall z\in {Z}\}.\]
Clearly, $H_{{Z}}$ is a nontrivial submodule in $H^{2}(\mathbb{D}^{2})$. Submodules of this kind shall be called zero-based submodules. For example, the submodule $M=[z-w]$ is zero-based, but $[(z-w)^2]$ is not. One verifies that Rudin's  submodules in Example 2.2 and 2.3 are in fact both zero-based (\cite{QY2}). When $Z$ is ``opposite" to distinguished variety we have the following result.

\begin{thm}[Qin and Yang \cite{QY2}]
Let ${Z}$ be a zero set of $\hh$ such that no point of $\mathbb{T}^{2}$ is a limit point of
${Z}$, then $H_{{Z}}$ is Hilbert Schmidt.
\end{thm}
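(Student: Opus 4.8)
The plan is to verify the hypothesis of Theorem 4.5: it suffices to produce a point $\lambda_0\in\D$ at which $S_1-\lambda_0$ is Fredholm, for then $\sigma_e(S_1)\neq\overline{\D}$ and \emph{a fortiori} $\sigma_e(S_1)\cap\sigma_e(S_2)\neq\overline{\D}$, so $H_Z$ is Hilbert-Schmidt. By Proposition 3.3(b), $S_1-\lambda_0$ is Fredholm exactly when the one-variable slice operator $L_{M_1}(\lambda_0)\colon M_1\to\hho$ is Fredholm (here $M=H_Z$ and $M_1=M\ominus z_1M$), so the whole problem collapses to a Fredholm question about a suitably chosen slice of $Z$.

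Choosing $\lambda_0$ is the geometric step. Since $Z\subset\D^2$ and no point of $\T^2$ is a limit point of $Z$, the closure $\overline{Z}$ in $\overline{\D}^2$ is compact and disjoint from the compact set $\T^2$, so $\operatorname{dist}(\overline{Z},\T^2)>0$. Write $Z=\{f=0\}$ with $f\in\hh$ nontrivial and, for $\lambda\in\D$, put $Z_\lambda=\{w\in\D\colon(\lambda,w)\in Z\}$; for all but a discrete set of $\lambda$ the function $f(\lambda,\cdot)$ is nontrivial, so $Z_\lambda$ is the zero set of a nonzero function in $\hho$, hence a Blaschke sequence whose accumulation points all lie on $\T$. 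If $Z_\lambda$ were infinite for every such $\lambda$, we could select an accumulation point $\zeta_\lambda\in\T$ of $Z_\lambda$, note $(\lambda,\zeta_\lambda)\in\overline{Z}$, and then let $\lambda$ run through this dense set toward an arbitrary $\zeta_\ast\in\T$ to obtain, after passing to a subsequence, a point of $\overline{Z}\cap\T^2$ --- contradicting the hypothesis. Hence there is a $\lambda_0\in\D$ with $Z_{\lambda_0}$ finite; the same argument shows this holds whenever $|\lambda_0|$ is close enough to $1$.

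The crux is to upgrade ``$Z_{\lambda_0}$ finite'' to ``$S_1-\lambda_0$ Fredholm''. First I would strip off the finite fiber over $\lambda_0$: with $Z_0=\{\lambda_0\}\times Z_{\lambda_0}$ and $Z_1=Z\setminus Z_0$, the quotient module $H_{Z_1}^{\perp}$ differs from $N=H_Z^{\perp}$ only by a finite-dimensional subspace (spanned by the reproducing kernels, with jets, at the finitely many points of $Z_0$), so $S_1-\lambda_0$ is a finite-rank perturbation of the corresponding operator on $H_{Z_1}^{\perp}$; thus one may assume $Z$ itself has no point with first coordinate $\lambda_0$. One must then show $L_{M_1}(\lambda_0)$ is Fredholm. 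That its kernel and cokernel are finite-dimensional can be obtained by comparing vanishing on $Z$ with vanishing on $Z$ away from the (now trivial) fiber over $\lambda_0$, which differ by finitely many conditions because $Z$ is locally an analytic variety. The hard part --- and the main obstacle --- is to show that $L_{M_1}(\lambda_0)$ has \emph{closed range}: this is where the quantitative separation $\operatorname{dist}(\overline{Z},\T^2)>0$ must be used essentially, presumably in tandem with the Hilbert-Schmidtness of $R_{M_1}(\lambda_0)$ from Lemma 3.4, to exclude an almost-null sequence in $M_1$ for $L_{M_1}(\lambda_0)$ whose norms stay bounded below.

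Once $S_1-\lambda_0$ is shown Fredholm, Theorem 4.5 delivers the conclusion, and as a by-product Corollary 3.5 forces $\dim\bigl(H_Z\ominus(z_1H_Z+z_2H_Z)\bigr)<\infty$. A possible alternative would bypass $\sigma_e(S_1)$ entirely and show directly that the core operator $C^{H_Z}$ is Hilbert-Schmidt by estimating $G^{H_Z}(z,\lambda)=K^{H_Z}(z,\lambda)/K(z,\lambda)$ on $\T^2\times\T^2$, using that $N$ is generated by reproducing kernels based at points of $Z$ that keep a fixed distance from $\T^2$; but controlling $G^{H_Z}$ away from the diagonal of $\T^2\times\T^2$ looks at least as delicate as the Fredholm estimate above.
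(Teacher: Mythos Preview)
This survey does not include a proof of the theorem; it merely records the result and cites \cite{QY2}. So there is no argument in the present paper to compare against, and the assessment below concerns only the internal coherence of your sketch.

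Your overall strategy---feed the hypothesis into Theorem 4.5 by exhibiting a $\lambda_0\in\D$ with $\lambda_0\notin\sigma_e(S_1)$, and detect this via Proposition 3.3(b) as Fredholmness of the slice map $L_{M_1}(\lambda_0)$---is the natural one and is very likely the route taken in \cite{QY2}. Your geometric step is correct: the hypothesis $\overline{Z}\cap\T^2=\emptyset$ does force $Z_{\lambda_0}$ to be finite for all non-exceptional $\lambda_0$ in an annulus near $\T$, precisely by the compactness argument you give (each $\zeta_\lambda\in\T$, so any subsequential limit lands in $\T^2$). The reduction that strips off the finite fiber $Z_0=\{\lambda_0\}\times Z_{\lambda_0}$ as a finite-rank perturbation of $N$ is also sound.

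That said, the proposal is---as you yourself flag---incomplete at the decisive point: proving that $L_{M_1}(\lambda_0)$ has closed range. Nothing written so far uses the hypothesis quantitatively enough to exclude a norm-one sequence in $M_1$ on which $L_{M_1}(\lambda_0)$ tends to zero; invoking Lemma 3.4 does not obviously help, since the Hilbert--Schmidtness of $R_{M_1}(\lambda)$ controls the orthogonal slice, not the one you need to bound below. The vague appeal to ``$Z$ is locally an analytic variety'' for the kernel/cokernel dimensions is also not yet an argument, since $Z$ need not be a variety globally (it is only the zero set of a single $\hh$ function). Your alternative route via direct $L^2(\T^2\times\T^2)$-estimation of $G^{H_Z}$ is, as you suspect, at least as delicate. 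In short: the architecture is right, but the load-bearing estimate is missing, and supplying it is essentially the whole content of the theorem.
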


We end this section with a conjecture in  \cite{QY2}.

\vspace{3mm}

\noindent {\bf Conjecture 11.} Every zero-based submodule is Hilbert-Schmidt.

\section{Congruent submodules and their invariants}

The unitary equivalence and similarity in Definition 2.5 appear to be the most natural equivalence relations for submodules in $\hh$. However, as suggested by Theorem 2.8 and Example 2.9, they are too rigid for the purpose of classification of submodules. To make this point more lucent let us consider the group $Aut(\D^2)$ of biholomorphic self-maps of $\D^2$. It is known (\cite{Ru1}) that $Aut(\D^2)$ is generated by the reflection $(z_1, z_2)\to (z_2, z_1)$ and the M\"{o}bius maps
\[(z_1, z_2)\to \left(\eta_1\frac{z_1-\lb_1}{1-\overline{\lb_1}z_1}, \eta_2\frac{z_2-\lb_2}{1-\overline{\lb_2}z_2}\right),\ \ \lb\in \D^2, \eta\in {\mathbb T}^2.\]
One observes that for every $x\in Aut(\D^2)$ the composition \[L_xf (z)=f(x(z)), \ \ f\in \hh\] is a bounded invertible operator on $\hh$. Moreover, if $M$ is a submodule then so is $L_x(M)$. We say that two submodules $M$ and $M'$ are
$Aut(\D^2)$-equivalent if there is an $x\in Aut(\D^2)$ such that $M'=L_x(M)$. 
The submodules $H_\alpha$ and $H_{\beta}$ in Example 2.9 are not unitarily equivalent or similar by the rigidity theorem, but they are $ Aut(\D^2)$-equivalent because there exists an $x\in Aut(\D^2)$ such that $\beta=x(\alpha)$. On the other hand, if $\theta$ is an inner function then $\theta\hh$ is unitarily equivalent to $\hh$ but they are not $ Aut(\D^2)$-equivalent. If we hold the belief that both unitary equivalence and $Aut(\D^2)$-equivalence are natural then, for the purpose of classification of submodules, we need an equivalence relation that is coarser than both of them. By Formula (8.2) unitarily equivalent submodules have unitarily equivalent core operators. The following fact about core operator thus motivates the definition of congruent submodules. 

\begin{prop}
For every $x\in Aut(\D^2)$ and every submodule $M$, we have  \[C^{L_{x}(M)}=L_{x}C^{M}L_{x}^{*}.\]
\end{prop}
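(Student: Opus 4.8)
The plan is to reduce the statement to a transformation law for the function $G^M$, and then to recognize the core operator's integral representation inside an $\hh$‑inner product. First I would recall from \cite{Ru1} that $Aut(\D^2)$ is generated by the coordinate flip and the coordinate‑wise maps $(z_1,z_2)\mapsto(\psi_1(z_1),\psi_2(z_2))$ with each $\psi_i$ a disc automorphism, all of which extend holomorphically across $\mathbb T^2$ and map $\mathbb T^2$ onto $\mathbb T^2$. From the one‑variable identity $\bigl(1-\overline{\psi(w)}\psi(z)\bigr)^{-1}=J_\psi(z)\overline{J_\psi(w)}\,(1-\bar w z)^{-1}$ (where $J_\psi$ is, up to a unimodular constant, the reciprocal of a single Blaschke factor), tensored over the two coordinates and composed via the chain rule, one obtains the Szeg\H{o}‑kernel cocycle
\[K(x(z),x(\lb))=J_x(z)\,\overline{J_x(\lb)}\,K(z,\lb),\qquad z,\lb\in\D^2,\]
where both $J_x$ and $b_x:=1/J_x$ belong to the bidisc algebra $A(\D^2)$ (and $J_x\equiv1$ when $x$ is the flip).

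The second and main step is the reproducing‑kernel transformation
\[K^{L_x(M)}(z,\lb)=b_x(z)\,\overline{b_x(\lb)}\,K^M\!\bigl(x(z),x(\lb)\bigr).\]
Since $L_x$ is not unitary, I would not get this by a direct conjugation; instead I introduce the weighted composition operator $U_xf:=b_x\cdot(f\circ x)$. A standard change of variables on $\mathbb T^2$ (with Radon--Nikodym factor $|b_x|^2$, exactly the reciprocal of the one attached to $L_x$) shows $U_x$ is unitary on $\hh$, and a short computation gives $U_x^{*}K(\cdot,z)=\overline{b_x(z)}\,K(\cdot,x(z))$. The key observation is that $L_x(M)=U_x(M)$ for every submodule $M$: indeed $U_x(M)=b_x\cdot L_x(M)$, and since $b_x,J_x\in A(\D^2)$ while $L_x(M)$ is itself a submodule, multiplication by $b_x$ is a bijection of $L_x(M)$ onto itself. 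Unitarity of $U_x$ together with $P_{U_x(M)}=U_xP_MU_x^{*}$ then yields the displayed formula, and dividing by $K$ and using $J_xb_x\equiv1$ collapses it to the clean identity
\[G^{L_x(M)}(z,\lb)=G^M\!\bigl(x(z),x(\lb)\bigr).\]

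For the assembly, given any submodule $P$ put $g^P_w(\lb):=\overline{G^P(w,\lb)}=(1-\overline{w_1}\lb_1)(1-\overline{w_2}\lb_2)\,K^P(\lb,w)$; because $P$ is a submodule one has $g^P_w\in P$, and Definition 8.6 is precisely the statement $(C^Ph)(w)=\langle h,\,g^P_w\rangle_{\hh}$ for $h\in P$, which then holds for all $h\in\hh$ since $C^P$ and $\langle\,\cdot\,,g^P_w\rangle$ both annihilate $P^{\perp}$. Hence, for $f\in\hh$,
\[(L_xC^ML_x^{*}f)(z)=(C^ML_x^{*}f)(x(z))=\langle L_x^{*}f,\,g^M_{x(z)}\rangle=\langle f,\,L_xg^M_{x(z)}\rangle,\]
and by the transformation law $(L_xg^M_{x(z)})(\lb)=g^M_{x(z)}(x(\lb))=\overline{G^M(x(z),x(\lb))}=\overline{G^{L_x(M)}(z,\lb)}=g^{L_x(M)}_z(\lb)$, so the right‑hand side equals $\langle f,\,g^{L_x(M)}_z\rangle=(C^{L_x(M)}f)(z)$. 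This is the asserted identity $C^{L_x(M)}=L_xC^ML_x^{*}$.

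I expect the main obstacle to be the second step — establishing the reproducing‑kernel transformation for $L_x(M)$, and in particular the observation $L_x(M)=U_x(M)$ that permits trading the non‑unitary $L_x$ for the unitary $U_x$; one must also be a little careful with the boundary‑value manipulations of $G$ and the change of variables defining $U_x$. It is worth emphasizing that the statement genuinely involves the Hilbert‑space adjoint $L_x^{*}$ rather than $L_x^{-1}$ — indeed $U_xC^MU_x^{*}\neq C^{U_x(M)}$ in general, already for $M=\theta\hh$ — so one cannot simply invoke the fact that unitarily equivalent submodules have unitarily equivalent core operators; the argument above is exactly what keeps track of that discrepancy.
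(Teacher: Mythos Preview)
The survey itself does not prove this proposition; it is stated without argument and referred to \cite{Ya7} for the theory of the core operator. So there is no ``paper's own proof'' to compare against here, and I can only assess your argument on its merits.

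Your proof is correct. The key identity is the transformation law $G^{L_x(M)}(z,\lb)=G^M(x(z),x(\lb))$, and your route to it---introducing the unitary weighted composition $U_x f=b_x\cdot(f\circ x)$, observing that $L_x(M)=U_x(M)$ because $b_x$ and $J_x$ are both units in $A(\D^2)$, and then transporting $K^M$ via $P_{L_x(M)}=U_xP_MU_x^*$---is exactly the natural one, and is almost certainly what is done in \cite{Ya7}. The final assembly, rewriting $C^P$ as $f\mapsto\langle f,g^P_w\rangle$ with $g^P_w\in P$ and then composing, is clean and correct; in particular your remark that the formula $(C^Pf)(w)=\langle f,g^P_w\rangle$ extends to all $f\in\hh$ (both sides kill $P^\perp$) is precisely what is needed to make sense of $C^ML_x^{*}f$ when $L_x^{*}f$ need not lie in $M$.

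Your closing caveat is also to the point: the alternative route through formula (8.2), writing $C^M=P_M-T_1P_MT_1^*-T_2P_MT_2^*+T_1T_2P_MT_1^*T_2^*$ on $\hh$ and trying to conjugate by $L_x$, runs into the problem that $L_x$ intertwines $T_i$ with $T_{x_i}$ rather than with $T_i$, so one does not get the result by naive conjugation. The core-function approach bypasses this entirely.
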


\begin{definition}[\cite{Ya7}] Two submodules $M$ and $M'$ are said to be congruent if $C^{M}$ and $C^{M'}$ are congruent, i.e., there is a bounded invertible linear operator $J$ from $M$ to $M'$ such that $C^{M'}=JC^{M}J^*$. 
\end{definition}
 
Therefore, if two submodules are unitary equivalent or $Aut(\D^2)$-equivalent then they are congruent. After an analysis on the spectral picture of core operators, paper \cite{Ya10} gives the following classification of submodules.
\begin{thm}
Let $M$ and $M'$ be submodules with a finite rank core operators. Then they are congruent if and only if $C^{M}$ and $C^{M'}$ have the same rank.
\end{thm}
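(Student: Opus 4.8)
The plan is to reduce the statement to a Sylvester-type law of inertia, by first showing that for a finite-rank core operator the whole inertia is determined by the rank alone. The forward implication needs no work: if $C^{M'}=JC^{M}J^{*}$ with $J$ bounded and invertible, then $J$ and $J^{*}$ are injective with dense range, hence $\operatorname{rank}C^{M'}=\operatorname{rank}C^{M}$.

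For the converse, I would begin by reading off the spectrum of a finite-rank core operator $C=C^{M}$. Being selfadjoint of finite rank, $C$ has only finitely many nonzero eigenvalues, each of finite multiplicity, all in $[-1,1]$ since $\|C\|=1$. By the remarks preceding the theorem the eigenspace for $1$ is $M\ominus(z_{1}M+z_{2}M)$, say of dimension $p$, and the eigenspace for $-1$ (if present) is $(z_{1}M\cap z_{2}M)\ominus z_{1}z_{2}M$, say of dimension $q$; moreover, by the symmetry result of \cite{Ya10}, every eigenvalue in $(-1,1)\setminus\{0\}$ occurs together with its negative with the same multiplicity. Since $C$ is trace class with $\operatorname{Tr}C=1$ by (8.1), summing the eigenvalues with multiplicity makes the interior pairs cancel and leaves $p-q=1$. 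Writing $s$ for the total multiplicity of the eigenvalues lying in $(0,1)$, the numbers of positive and negative eigenvalues of $C$, counted with multiplicity, are therefore $n_{+}=p+s$ and $n_{-}=q+s$, so that $n_{+}-n_{-}=1$ while $n_{+}+n_{-}=\operatorname{rank}C$; hence
\[
n_{+}=\tfrac12\big(\operatorname{rank}C+1\big),\qquad n_{-}=\tfrac12\big(\operatorname{rank}C-1\big).
\]
In particular the inertia $(n_{+},n_{-})$ is a function of $\operatorname{rank}C$ alone.

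Next I would prove that two finite-rank selfadjoint operators with equal inertia are congruent. Every nontrivial submodule of $\hh$ is separable and infinite-dimensional, so I may fix a unitary $W\colon M\to M'$ and work with $A:=C^{M}$ and $B:=W^{*}C^{M'}W$ on the single space $M$; note $\ker A$ and $\ker B$ are both infinite-dimensional. On the finite-dimensional subspace $\operatorname{ran}A$ the operator $A$ is invertible and selfadjoint, so after diagonalizing and rescaling its eigenvalues to $\pm1$ by an invertible operator $D$ on $\operatorname{ran}A$ and extending by the identity on $\ker A$, I obtain a bounded invertible $J_{A}$ on $M$ with $J_{A}AJ_{A}^{*}=P_{+}-P_{-}$, the difference of the orthogonal projections onto two orthogonal subspaces of dimensions $n_{+}$ and $n_{-}$. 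Doing the same for $B$ and using the equality of inertia, I can arrange that $B$ is brought to exactly the same canonical form; composing, there is a bounded invertible $\widetilde J$ on $M$ with $B=\widetilde J A\widetilde J^{*}$, and then $J:=W\widetilde J\colon M\to M'$ is bounded invertible with $C^{M'}=JC^{M}J^{*}$. Combined with the previous paragraph, this finishes the proof.

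The part I expect to carry the real content is the spectral bookkeeping in the second paragraph: it is the combination of $\operatorname{Tr}C=1$ with the $\pm\lambda$ symmetry of the spectrum of $C$ that forces $n_{+}-n_{-}=1$ and thereby collapses the a priori two-parameter invariant $(n_{+},n_{-})$ to the single number $\operatorname{rank}C$. The third paragraph is essentially the finite-dimensional law of inertia; the only point deserving care is that the intertwining operator must be bounded with bounded inverse on all of $M$, which is exactly why one extends by the identity on the infinite-dimensional kernel of $C$ rather than touching it.
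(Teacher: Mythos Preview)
Your proof is correct and follows essentially the same route that the paper attributes to \cite{Ya10}: the spectral symmetry of $C$ on $(-1,1)$ combined with the trace identity $\operatorname{Tr}C=1$ from (8.1) forces $n_{+}-n_{-}=1$, so the inertia collapses to the single invariant $\operatorname{rank}C$, after which Sylvester's law of inertia (in its finite-rank, infinite-dimensional version, extended by the identity on $\ker C$) gives the congruence.
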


Theorem 9.3 and Example 8.12 imply that, up to congruence, every submodule with finite rank core operator is of the type $M_K$ for some finite subset $K\subset \D^2$. In an attempt to find invariants for congruent submodules with infinite rank core operator, the notions of Lorentz group and little Lorentz group for submodules are defined and studied by Wu, Seto and the author \cite{WSY}.

\begin{definition}
Let $M$ be a submodule of $H^{2}(\mathbb{D}^{2})$ and denote by $B^{-1}(M)$ the set of all invertible bounded linear operators on $M$, we call the set
\[
\mathcal{G}(M)=\{T\in B^{-1}(M)\mid {T}^{*}{C^{M}}{T}=C^{M}\}
\]
Lorentz group of $M$.
\end{definition}
It is not hard to verify that $\mathcal{G}(M)$ is indeed a group.
\begin{prop}
If two submodules ${M}$ and ${M'}$ are congruent, then their Lorentz groups $\mathcal{G}(M)$ and $\mathcal{G}(M')$ are isomorphic. 
\end{prop}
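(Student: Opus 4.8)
The plan is to exhibit an explicit group isomorphism $\Phi: \mathcal{G}(M) \to \mathcal{G}(M')$ built from the congruence operator. Suppose $M$ and $M'$ are congruent, so there is a bounded invertible linear operator $J: M \to M'$ with $C^{M'} = J C^M J^*$. For $T \in \mathcal{G}(M)$, define $\Phi(T) = J T J^{-1}$. Since $J$ is bounded invertible and $T$ is bounded invertible on $M$, the operator $\Phi(T)$ is bounded invertible on $M'$, so $\Phi(T) \in B^{-1}(M')$. The first substantive step is to check that $\Phi(T)$ actually lies in $\mathcal{G}(M')$, i.e. that $\Phi(T)^* C^{M'} \Phi(T) = C^{M'}$. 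Here one computes $\Phi(T)^* = (J^{-1})^* T^* J^*$, so
\[
\Phi(T)^* C^{M'} \Phi(T) = (J^{-1})^* T^* J^* (J C^M J^*) J T J^{-1} = (J^{-1})^* T^* (J^*J) C^M (J^*J) T J^{-1}.
\]
This is where a little care is needed: the factor $J^*J$ does not simply cancel. The cleaner route is to rewrite the defining relation of $\mathcal{G}$ in terms of $C^{M'}$ directly: from $C^{M'} = J C^M J^*$ we get $C^M = J^{-1} C^{M'} (J^*)^{-1} = J^{-1} C^{M'} (J^{-1})^*$, and then substituting the condition $T^* C^M T = C^M$ and conjugating both sides by $J$ on the left and $J^*$ on the right yields, after simplification, $(JTJ^{-1})^* C^{M'} (JTJ^{-1}) = C^{M'}$. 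I would carry out this algebra carefully, tracking that every adjoint and inverse is of a bounded invertible operator so all manipulations are legitimate.

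Once $\Phi$ is shown to map $\mathcal{G}(M)$ into $\mathcal{G}(M')$, the remaining steps are routine. The map $\Phi$ is a group homomorphism because $\Phi(T_1 T_2) = J T_1 T_2 J^{-1} = (J T_1 J^{-1})(J T_2 J^{-1}) = \Phi(T_1)\Phi(T_2)$, and it sends the identity on $M$ to the identity on $M'$. For injectivity, $\Phi(T) = I_{M'}$ forces $JTJ^{-1} = I_{M'}$, hence $T = I_M$. For surjectivity, given $S \in \mathcal{G}(M')$, the operator $J^{-1} S J$ is bounded invertible on $M$ and a symmetric computation (using $C^M = J^{-1} C^{M'} (J^{-1})^*$ and the defining relation for $S$) shows $J^{-1} S J \in \mathcal{G}(M)$, with $\Phi(J^{-1}SJ) = S$. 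Thus $\Phi$ is a group isomorphism.

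The main obstacle, as flagged above, is purely bookkeeping: verifying that the conjugation $T \mapsto JTJ^{-1}$ genuinely preserves the quadratic relation $T^* C\, T = C$ despite $J$ not being unitary. The key observation that makes it work is that the congruence relation $C^{M'} = J C^M J^*$ pairs the $J$ on the left with a $J^*$ on the right in exactly the pattern that the relation $T^* C^{M'} T = C^{M'}$ requires, so the mismatch one fears after a naive substitution cancels once the relation is written in the correct form. No analytic input beyond boundedness and invertibility of $J$ is needed; in particular one does not use any of the structure of $C^M$ as a core operator, only that congruence is being taken in the sense of Definition 9.2.
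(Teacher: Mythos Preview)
Your proposed isomorphism $\Phi(T)=JTJ^{-1}$ is wrong: conjugation by $J$ does \emph{not} in general send $\mathcal{G}(M)$ into $\mathcal{G}(M')$. Your own ``cleaner route'' computation, carried out honestly, produces a different map. Starting from $T^*C^MT=C^M$ with $C^M=J^{-1}C^{M'}(J^{-1})^*$ and multiplying on the left by $J$ and on the right by $J^*$ gives
\[
\big(JT^*J^{-1}\big)\,C^{M'}\,\big((J^{-1})^*TJ^*\big)=C^{M'},
\]
which is exactly $\big((J^*)^{-1}TJ^*\big)^*C^{M'}\big((J^*)^{-1}TJ^*\big)=C^{M'}$. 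So the operator that lands in $\mathcal{G}(M')$ is $(J^*)^{-1}TJ^*$, not $JTJ^{-1}$; these agree only when $J$ is unitary. A two-by-two check makes the failure of $JTJ^{-1}$ explicit: take $C^M=\operatorname{diag}(1,-1)$, $J=\operatorname{diag}(2,1)$, so $C^{M'}=\operatorname{diag}(4,-1)$, and let $T=\begin{pmatrix}\cosh t&\sinh t\\ \sinh t&\cosh t\end{pmatrix}\in\mathcal{G}(M)$. Then $S=JTJ^{-1}=\begin{pmatrix}\cosh t&2\sinh t\\ \tfrac12\sinh t&\cosh t\end{pmatrix}$ satisfies $(S^*C^{M'}S)_{11}=4\cosh^2 t-\tfrac14\sinh^2 t\neq 4$ for $t\neq 0$.

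The fix is minor but essential: define $\Psi(T)=(J^*)^{-1}TJ^*$. This is still conjugation by a bounded invertible operator, so $\Psi$ is a group homomorphism with inverse $S\mapsto J^*S(J^*)^{-1}$, and the computation above shows $\Psi(T)\in\mathcal{G}(M')$. With this correction the rest of your outline (homomorphism, injectivity, surjectivity) goes through verbatim. The paper itself states Proposition~9.5 without proof (referring to \cite{WSY}), so there is no in-text argument to compare against, but the standard proof is precisely this conjugation by $(J^*)^{-1}$.
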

The converse of Proposition 9.5 is also true if the associated core operators are of finite rank. If we let $(H^{\infty})^{-1}$ be the set of all invertible elements in the Banach algebra $H^{\infty}(\mathbb{D}^2)$, then for every $\phi\in (H^{\infty})^{-1}$ and every submodule $M$ we have $R_{\phi}\in B^{-1}(M)$, where $R_\phi$ is the restriction of the Toeplitz operator $T_{\phi}$ to $M$. Thus one can define a subset of $\mathcal{G}(M)$ as follows.

\begin{definition}
Let $M$ be a submodule, then the set
\[
{\mathcal G_0}(M)=\{\varphi\in (H^{\infty})^{-1}\mid {R_\varphi}^{*}{C}{R_\varphi}=C\}
\]
is called the little Lorentz group of $M$.
\end{definition}
The set ${\mathcal G_0}(M)$ is indeed a non-trivial proper abelian subgroup in $\mathcal{G}(M)$.
Surprisingly, it turns out to be a good invariant with respect to unitary equivalence of submodules.
\begin{prop}
If two submodules $M$ and $M'$ are unitarily equivalent, then
\[{\mathcal{G}_0}(M)={\mathcal{G}_0}(M').\]
\end{prop}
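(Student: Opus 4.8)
\medskip

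\noindent\emph{Proof proposal.} The plan is to reduce the statement to the concrete description of unitary equivalence furnished by Theorem 2.6 and then exploit the covariance of the core operator encoded in Formula (8.2). First, since $M$ and $M'$ are unitarily equivalent, Theorem 2.6 supplies a $\phi\in L^{\infty}(\mathbb T^2)$ with $|\phi|=1$ a.e.\ and $M'=\phi M$; the map $Uh=\phi h$ is then a unitary $U\colon M\to M'$, and it is a module map since $U(z_i h)=\phi z_i h=z_i(\phi h)$ lies in $M'$ for $i=1,2$. Consequently $U R^M_i=R^{M'}_iU$, and because (8.2) expresses the core operator as the polynomial $C=\Delta_{R^*}=I-R_1R_1^{*}-R_2R_2^{*}+R_1R_2R_1^{*}R_2^{*}$ in $R_1,R_2$ and their adjoints, conjugation by $U$ yields
\[
C^{M'}=U\,C^{M}\,U^{*}.
\]

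Next I would record the matching intertwining for the operators in Definition 9.7. Fix $\varphi\in(H^{\infty})^{-1}$. As noted just before Definition 9.7, $M$ and $M'$ are $H^{\infty}(\D^2)$-submodules and $R^{M}_{\varphi},R^{M'}_{\varphi}$ are invertible; moreover on these spaces $R_\varphi$ is simply multiplication by $\varphi$. Since multiplication operators commute, for $h\in M$ we get $U R^{M}_{\varphi}h=\phi\varphi h=\varphi(\phi h)=R^{M'}_{\varphi}Uh$, i.e.\ $R^{M'}_{\varphi}=U R^{M}_{\varphi}U^{*}$. Combining this with the identity above, if $\varphi\in\mathcal G_0(M)$, meaning $(R^{M}_{\varphi})^{*}C^{M}R^{M}_{\varphi}=C^{M}$, then
\[
(R^{M'}_{\varphi})^{*}C^{M'}R^{M'}_{\varphi}
=U\big((R^{M}_{\varphi})^{*}C^{M}R^{M}_{\varphi}\big)U^{*}
=U C^{M}U^{*}=C^{M'},
\]
so $\varphi\in\mathcal G_0(M')$; hence $\mathcal G_0(M)\subset\mathcal G_0(M')$. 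Since $|\phi|=1$ a.e.\ gives $M=\overline{\phi}\,M'$, the roles of $M$ and $M'$ are symmetric, and the reverse inclusion follows the same way, yielding $\mathcal G_0(M)=\mathcal G_0(M')$.

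The one step that truly needs care — and the place a referee would want spelled out — is the covariance $C^{M'}=U C^{M}U^{*}$; everything else is formal once $M'=\phi M$ is in hand. The cleanest justification is the one above via (8.2), where the polynomial form of $\Delta_{R^*}$ makes the conjugation transparent. Alternatively, one may argue directly from the kernel definition: $M'=\phi M$ forces $K^{M'}(z,\lambda)=\phi(z)\overline{\phi(\lambda)}\,K^{M}(z,\lambda)$, hence $G^{M'}(z,\lambda)=\phi(z)\overline{\phi(\lambda)}\,G^{M}(z,\lambda)$ on $\mathbb T^2\times\mathbb T^2$, which again exhibits $C^{M'}$ as $U$ conjugated with $C^{M}$. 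I expect no further obstacles; the proposition is essentially a compatibility statement between the two appearances of the ambient function $\phi$ and the multiplier $\varphi$.
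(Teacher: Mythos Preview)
Your argument is correct. The survey does not include a proof of this proposition (it is stated with reference to \cite{WSY}), so there is no in-text proof to compare against; that said, your approach is precisely the one the paper's setup anticipates, combining Theorem~2.6 to produce the unimodular multiplier $\phi$ with $M'=\phi M$, and then Formula~(8.2) to obtain $C^{M'}=UC^{M}U^{*}$, after which the commutation $R^{M'}_{\varphi}=UR^{M}_{\varphi}U^{*}$ is immediate and the conclusion follows.
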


The converse of Proposition 9.7 is not true. A digression to subgroups of $(H^{\infty})^{-1}$ is needed to show a counter-example. First of all, we have the following fact from \cite{Ga}.

\begin{lem}
Denote by $L_{\mathbb{R}}^{\infty}(\mathbb{T})$ the set of essentially bounded real-valued functions on $\mathbb{T}$. Then there is a surjective group homomorphism $\rho$ from $(H^\infty(\mathbb{D}))^{-1}$ to $L_{\mathbb{R}}^{\infty}(\mathbb{T})$ with $\ker \rho=\mathbb{T}$.
\end{lem}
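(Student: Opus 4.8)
The plan is to take $\rho(\phi)=\log|\phi^{*}|$, where $\phi^{*}$ denotes the a.e.\ radial boundary function of $\phi$ on $\mathbb{T}$, and then to verify the three required properties using only elementary $H^{\infty}$ theory. First I would check that $\rho$ is well defined: if $\phi\in (H^{\infty}(\mathbb{D}))^{-1}$ then $\phi$ and $1/\phi$ both belong to $H^{\infty}(\mathbb{D})$, so $\phi$ has no zeros in $\mathbb{D}$ and $\|1/\phi\|_{\infty}^{-1}\le |\phi(z)|\le \|\phi\|_{\infty}$ for all $z\in\mathbb{D}$. Consequently $\log|\phi|$ is a bounded real harmonic function on $\mathbb{D}$, hence equals the Poisson integral of a function in $L_{\mathbb{R}}^{\infty}(\mathbb{T})$; that function is precisely $\log|\phi^{*}|$, so $\rho(\phi)\in L_{\mathbb{R}}^{\infty}(\mathbb{T})$. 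The homomorphism property is then immediate from $(\phi\psi)^{*}=\phi^{*}\psi^{*}$ a.e., which gives $\rho(\phi\psi)=\log|\phi^{*}|+\log|\psi^{*}|=\rho(\phi)+\rho(\psi)$ in the additive group $L_{\mathbb{R}}^{\infty}(\mathbb{T})$.

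Next I would prove surjectivity by an explicit construction. Given $u\in L_{\mathbb{R}}^{\infty}(\mathbb{T})$, let $U=P[u]$ be its Poisson extension, a bounded real harmonic function on $\mathbb{D}$, and let $V$ be a harmonic conjugate of $U$, which exists because $\mathbb{D}$ is simply connected. Set $\phi=\exp(U+iV)$. Then $\phi$ is holomorphic on $\mathbb{D}$ with $|\phi|=e^{U}$, so $|\phi|$ and $|1/\phi|=e^{-U}$ are both bounded by $e^{\|u\|_{\infty}}$; thus $\phi\in (H^{\infty}(\mathbb{D}))^{-1}$. Since $U$ has radial limit $u$ a.e., we get $|\phi^{*}|=e^{u}$ a.e., and therefore $\rho(\phi)=u$.

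Finally I would compute the kernel. If $\rho(\phi)=0$ then $|\phi^{*}|=1$ a.e.\ on $\mathbb{T}$, so $\|\phi\|_{\infty}=1$ and the maximum modulus principle gives $|\phi|\le 1$ on $\mathbb{D}$; applying the same argument to $1/\phi\in H^{\infty}(\mathbb{D})$ yields $|\phi|\ge 1$ on $\mathbb{D}$. Hence $|\phi|\equiv 1$ on $\mathbb{D}$, which forces $\phi$ to be a unimodular constant, while conversely every unimodular constant evidently lies in the kernel; thus $\ker\rho=\mathbb{T}$. I do not expect a genuine obstacle here, as the statement is a standard fact from the theory of bounded analytic functions; the only place demanding a little care is the surjectivity step, namely verifying that the analytic completion $\exp(U+iV)$ is actually invertible in $H^{\infty}(\mathbb{D})$ with the prescribed boundary modulus, which rests on the fact that a bounded harmonic function is recovered as the Poisson integral of its a.e.-existing boundary values.
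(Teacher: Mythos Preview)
Your proof is correct and is precisely the standard argument: the map $\rho(\phi)=\log|\phi^{*}|$ is the canonical choice, surjectivity comes from the outer-function construction $\phi=\exp(P[u]+i\widetilde{P[u]})$, and the kernel computation is exactly as you wrote. The paper itself does not give a proof of this lemma; it simply cites it as a known fact from Garnett's \emph{Bounded Analytic Functions}, and your argument is essentially the one found there.
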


A two variable version of this lemma is shown in \cite{WSY}. Since the set of nonzero complex numbers ${\mathbb C}_{\times}$ is a ``trivial" subgroup in $(H^{\infty})^{-1}$, we only look at subgroups in $(H^{\infty})^{-1}/{{\mathbb C}_{\times}}$. 

\begin{exmp}
If $J$ is an ideal in $H^{\infty}$, then $\mathcal{G}(J):=(1+J)\cap (H^{\infty})^{-1}$ is a group. To see it, we let $1+f$ and $1+g$ be in the set, where $f,\ g\in J$. Clearly, $(1+f)(1+g)\in \mathcal{G}(J)$. Further,
$(1+f)^{-1}=1-f(1+f)^{-1}$, which is in $ \mathcal{G}(J)$. 
\end{exmp}

We first look at the one variable case. Consider the ideals $J^n=w^n H^{\infty}({\mathbb D}),\ n\geq 0$. Although ${\mathcal{G}}(J^n)$ is a proper subgroup in $ {\mathcal{G}}(J^{n-1})$ for each $n$, they are all isomorphic to each other (\cite{WSY}).

\begin{thm}
${\mathcal{G}}(J^{0})$ is isomorphic to ${\mathcal{G}}(J^{n})$ for each $n\geq 1$.
\end{thm}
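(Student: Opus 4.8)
The plan is to construct an explicit group isomorphism from $\mathcal{G}(J^0)$ onto $\mathcal{G}(J^n)$ by conjugation with the multiplication operator induced by the inner function $w^n$. Recall $J^n = w^n H^\infty(\mathbb D)$, so $\mathcal{G}(J^n) = (1 + w^n H^\infty(\mathbb D)) \cap (H^\infty(\mathbb D))^{-1}$. First I would observe that a unit $1 + w^n g$ (with $g \in H^\infty$) is an invertible element of $H^\infty(\mathbb D)$ whose inverse is again of the form $1 + w^n h$; this is the content of Example 9.10 with $J = J^n$, and it shows $\mathcal{G}(J^n)$ is a genuine subgroup. The key structural remark is that the map $\varphi \mapsto w^{-n}(\varphi - 1) + 1$, or more transparently the assignment that sends a unit $u$ with $u - 1 \in J^0$ to the unit $1 + w^n(u-1)$, should be a group homomorphism in the appropriate sense once one accounts for the twist coming from the non-commutative-looking but actually commutative multiplication in $H^\infty(\mathbb D)$.

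Concretely, the cleanest route is via the homomorphism $\rho$ of Lemma 9.9. One has the short exact sequence
\begin{equation*}
1 \longrightarrow \mathbb{T} \longrightarrow (H^\infty(\mathbb D))^{-1} \xrightarrow{\ \rho\ } L^\infty_{\mathbb R}(\mathbb T) \longrightarrow 1,
\end{equation*}
and I would identify $\mathcal{G}(J^n)$ with its image under $\rho$ together with the kernel data. A unit $u \in (H^\infty)^{-1}$ lies in $\mathcal{G}(J^0) = 1 + J^0$ precisely when $u$ extends holomorphically with $u(0) = 1$... but in fact $J^0 = H^\infty$, so $\mathcal{G}(J^0) = (H^\infty)^{-1}$ itself, which simplifies the base case: the claim reduces to showing $\mathcal{G}(J^n) \cong (H^\infty(\mathbb D))^{-1}$ for every $n \ge 1$. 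The natural candidate isomorphism $(H^\infty)^{-1} \to \mathcal{G}(J^n)$ is $\varphi \mapsto 1 + w^n(\varphi - 1)$; I would check it is well-defined into $(H^\infty)^{-1}$ (using that $1 + w^n(\varphi-1)$ agrees with $\varphi$ to high order near the boundary in modulus, or more safely by exhibiting its inverse directly as $1 + w^n((\varphi^{-1} - 1))$ after verifying the product collapses), that it is bijective with inverse $\psi \mapsto 1 + w^{-n}(\psi - 1)$ (well-defined since $\psi - 1 \in J^n$), and that it intertwines multiplication: $(1 + w^n(\varphi - 1))(1 + w^n(\varphi' - 1)) = 1 + w^n(\varphi\varphi' - 1)$ — this last identity is the crucial algebraic miracle and follows because $w^n(\varphi-1) + w^n(\varphi'-1) + w^{2n}(\varphi-1)(\varphi'-1) = w^n\big((\varphi-1)+(\varphi'-1) + w^n(\varphi-1)(\varphi'-1)\big)$, which equals $w^n(\varphi\varphi' - 1)$ only if $w^n(\varphi-1)(\varphi'-1) = (\varphi-1)(\varphi'-1)$, i.e. only for $n = 0$.

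That last observation signals the main obstacle: the naive map is not multiplicative for $n \ge 1$. The correct fix, which I expect is what the cited argument does, is to not use a "linear" twist but instead to conjugate. One should use that $\mathcal{G}(J^n)$ is the group of units in the (non-unital) ideal $J^n$ adjoined a unit, and that $J^n$ and $J^0$ are isomorphic as rings-without-unit via $f \mapsto w^{-n} f$ (a ring isomorphism $J^n \to J^0$ since $w$ is not a zero divisor in $H^\infty$). Passing to the adjoined-unit groups $1 + J^n$ and $1 + J^0$ is then a functorial operation on non-unital rings and preserves isomorphism of the unit groups; the only thing to verify is that this ring isomorphism $J^n \to J^0$ restricts to a bijection between the actual invertible elements $\mathcal{G}(J^n) = (1+J^n) \cap (H^\infty)^{-1}$ and $\mathcal{G}(J^0) = (1 + J^0)\cap (H^\infty)^{-1}$ — i.e. that $1 + f$ is invertible in $H^\infty$ iff $1 + w^{-n} f$ is, for $f \in J^n$. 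The hard part will be precisely this compatibility with invertibility in the ambient Banach algebra: the ring isomorphism of ideals does not a priori respect the $H^\infty$-norm or its spectrum, so one must argue separately, most likely using the boundary behaviour (the Gelfand transform / corona-type reasoning, or Lemma 9.9 applied to both sides) that $1 + f$ and $1 + w^{-n}f$ have the same invertibility status. I would isolate that as the single technical lemma and reduce everything else to the purely formal observation that $f \mapsto w^{-n}f$ is a ring isomorphism $J^n \to J^0$.
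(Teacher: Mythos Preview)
The paper is a survey and does not include a proof of this theorem; it simply records the statement with a citation to \cite{WSY}. So there is no in-paper proof to compare against. Nonetheless, your proposal contains a genuine gap.

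Your second (and final) approach rests on the assertion that $f \mapsto w^{-n}f$ is a \emph{ring} isomorphism from $J^n = w^n H^\infty$ onto $J^0 = H^\infty$. This is false: the map is an $H^\infty$-module isomorphism (in particular additive), but it is not multiplicative. Take $f_1 = f_2 = w^n \in J^n$; then $f_1 f_2 = w^{2n}$ is sent to $w^n$, whereas the images $w^{-n}f_1 = w^{-n}f_2 = 1$ multiply to $1 \neq w^n$. Consequently the unitization idea you invoke --- passing functorially from a non-unital ring $R$ to the group $(1 + R)\cap(\text{units})$ --- cannot be applied here, and the induced map $1 + f \mapsto 1 + w^{-n}f$ is \emph{not} a group homomorphism between $\mathcal{G}(J^n)$ and $\mathcal{G}(J^0)$: with $f = g = w^n$ one has $(1+w^n)^2 = 1 + (2w^n + w^{2n}) \mapsto 1 + (2 + w^n) = 3 + w^n$, while $(1+1)(1+1) = 4$.

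You correctly diagnosed that the ``linear twist'' $\varphi \mapsto 1 + w^n(\varphi-1)$ fails to be multiplicative, but the replacement suffers from the same structural obstruction: there is no algebraic transport of $J^0$ to $J^n$ as rings, since $J^0 = H^\infty$ possesses a multiplicative identity while $J^n$ for $n \geq 1$ does not, so the two are not isomorphic as non-unital rings at all. Whatever argument \cite{WSY} uses, it must proceed differently --- most likely through the homomorphism $\rho$ of Lemma~9.8 and an analysis of images and kernels, not by a direct ring-theoretic identification of the ideals.
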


On $\D^2$, similar subgroups can be defined. For non-negative integers $n_1,\ n_2$, we let
\[J^{n_1,n_2}=\{f\in H^{\infty}(\D^2)\mid \frac{\partial^i f}{\partial z_1^i}|_{(0,0)}=0,  \frac{\partial^j f}{\partial z_2^j}|_{(0,0)}=0,\ 0\leq i\leq n_1,\ 0\leq j\leq n_2\}.\]
However, it is no longer clear whether the groups ${\mathcal{G}}(J^{n_1,n_2})$ are isomorphic.
\begin{exmp}
Let $M=z_1\hh+z_2\hh$ and $M'$ be as in Example 8.9, we have ${\mathcal{G}_0}(M)={\mathcal{G}_0}(M')={\mathbb T}\times {\mathcal{G}}(J^{1,1})$. Since it can be verified that $M$ and $M'$ are not unitarily equivalent, we see that the converse of Proposition 9.7 is not true.
\end{exmp}

The above exploration on Lorentz group and little Lorentz group are preliminary at this stage. Whether the two groups can help to classify submodules with infinite-rank core operators remains to be seen. The following two problems may be worth looking into.

\vspace{3mm}

\noindent {\bf Problem 12}. For an ideal $J\subset H^{\infty}$, is $\operatorname{rank}(J)$ an invariant for the group $\mathcal{G}(J)$?

\noindent {\bf Problem 13}. For a submodule $M$, is $\mathcal{G}_0(M)$ maximal abelian in $\mathcal{G}(M)$?

\section{Concluding remarks}

The progress of research on $\hh$ has been rapid in the past two decadess, and it is still actively ongoing to this day. This
very sketchy survey is based on a lecture note for a 2018 summer school at Dalian University of Technology. Due to its time contraint, but more to the author's limited knowledge, some significant topics of the $\hh$ theory have not been included in this survey, most notably among which are commutant lifting, Hermitian bundles and interpolation. But interested readers may find information on these topics in \cite{AM1, BFKS, DP} and some other references that are already included in this survey. In closing, the author would like to thank Y. Lu for the invitation to the summer school, Y. Yang and F. Azari Key for hospitality and their help in collecting some references and M. Seto for comments and suggestions. Finally, this article is dedicated to the memory of Keiji Izuchi who carefully checked its first draft at the most difficult time.

\vspace{2mm}
\end{document}